\documentclass[english,linenumbered, ruled,vlined]{article}

\usepackage{natbib}
\usepackage[a4paper]{geometry}
\geometry{verbose,tmargin=2.5cm,bmargin=3cm,lmargin=2.7cm,rmargin=2.7cm}

\usepackage[utf8]{inputenc} %
\usepackage[T1]{fontenc}    %
\usepackage{hyperref}       %
\usepackage{url}            %
\usepackage{booktabs}       %
\usepackage{nicefrac}       %
\usepackage{microtype}      %
\usepackage{xcolor}         %
\usepackage{algpseudocode}
\usepackage{algorithm}
\usepackage{amsmath}
\usepackage{cleveref}
\usepackage{amsfonts}       %
\usepackage{amsthm}
\usepackage{graphicx}
\usepackage{breakcites}

\usepackage{breakurl}  

\makeatletter

\newtheorem{assumption}{\protect\assumptionname}
\newtheorem{theorem}{\protect\theoremname}

\newtheorem{remark}{\protect\remarkname}

\newtheorem{proposition}{\protect\propositionname}
\newtheorem{lemma}{\protect\lemmaname}

\providecommand{\assumptionname}{Assumption}
\providecommand{\corollaryname}{Corollary}
\providecommand{\definitionname}{Definition}
\providecommand{\lemmaname}{Lemma}
\providecommand{\propositionname}{Proposition}
\providecommand{\remarkname}{Remark}
\providecommand{\theoremname}{Theorem}

\global\long\def\holder{H\"{o}lder smooth}

\providecommand{\corollaryname}{Corollary}

\global\long\def\inner#1#2{\langle#1,#2\rangle}%

\global\long\def\norm#1{\Vert#1\Vert}%

\global\long\def\brbra#1{\big(#1\big)}%

\global\long\def\vertiii#1{\left\vert \kern-0.25ex  \left\vert \kern-0.25ex  \left\vert #1\right\vert \kern-0.25ex  \right\vert \kern-0.25ex  \right\vert }%

\global\long\def\argmin{\operatornamewithlimits{argmin}}%

\global\long\def\dom{\mathrm{dom}}%

\global\long\def\and{\mathrm{and}}%

\global\long\def\dom{\operatornamewithlimits{dom}}%

\global\long\def\tsum{{\textstyle {\sum}}}%

\global\long\def\Rbb{\mathbb{R}}%

\global\long\def\Bcal{\mathcal{B}}%

\global\long\def\Ocal{\mathcal{O}}%

\global\long\def\brbra#1{\big(#1\big)}%

\global\long\def\coloneqq{:=}

\usepackage{titletoc}
\newcommand{\insertseparator}{%
  \par
  \noindent\rule{\linewidth}{1.5pt} %
  \par
}

\makeatother

\begin{document}

\title{{\bf Accelerated Distance-adaptive Methods for H\"{o}lder Smooth and Convex Optimization}}

\bigskip

\author{Yijin Ren\footnote{Yijin Ren and Haifeng Xu contribute equally} \thanks{\texttt{lanyjr@stu.sufe.edu.cn}, Shanghai University of Finance and Economics} \qquad\qquad\quad Haifeng Xu\thanks{\texttt{xuhaifeng2004@stu.sufe.edu.cn}, Shanghai University of Finance and Economics} \qquad \qquad
Qi Deng \thanks{\texttt{qdeng24@sjtu.edu.cn}, Shanghai Jiao Tong University}}

\bigskip

\date{\today}

\maketitle

\begin{abstract}
This paper introduces new parameter-free first-order methods for convex optimization problems in which the objective function exhibits H\"{o}lder smoothness. Inspired by the recently proposed distance-over-gradient (DOG) technique, we propose an accelerated distance-adaptive method which achieves optimal anytime convergence rates for H\"{o}lder smooth problems without requiring prior knowledge of smoothness parameters or explicit parameter tuning. Importantly, our parameter-free approach removes the necessity of specifying target accuracy in advance, addressing a limitation found in the universal fast gradient methods (Nesterov, Yu. \textit{Mathematical Programming}, 2015).
For convex stochastic optimization, we further present a parameter-free accelerated method that eliminates the need for line-search procedures. Preliminary experimental results highlight the effectiveness of our approach on convex nonsmooth problems and its advantages over existing parameter-free or accelerated methods.
\end{abstract}

\section{Introduction}\label{intro}

In this paper, we consider the following composite function
\begin{equation}\label{pb:main}
    \min_{x\in \Rbb^d}\ \psi(x):=f(x)+g(x),
\end{equation}
where $f\colon\mathbb{R}^d\to\mathbb{R}$ is a continuous convex function, and  
$g\colon\mathbb{R}^d\to\mathbb{R}\cup \{+\infty\}$ is a simple proper lower semi-continuous (lsc) convex function of which the proximal operator is easy to evaluate.
First-order algorithms---(sub)gradient descent and their accelerated variants---are
the workhorses for solving~\eqref{pb:main}, particularly in
large-scale machine learning and AI applications.
Their empirical success, however, hinges on judiciously chosen stepsizes; manual tuning quickly becomes the dominant
practical bottleneck.

In standard analysis, the stepsize policy is often designed based on the smoothness level of the objective function. Specifically, subgradient methods for nonsmooth problems typically employ diminishing stepsizes, whereas gradient descent methods for smooth optimization often utilize a stepsize inversely proportional to the gradient Lipschitz parameter.  
In a seminal work, \citet{nesterov2015universal} considered convex H\"older smooth problem where $f(x)$  satisfies the following condition:
\begin{equation}\label{eq:holder-smooth}
\|\nabla f(x)-\nabla f(y)\|_*\leq L_{\nu}\|x-y\|^{\nu}, \forall x, y\in \Rbb^d.
\end{equation}
where $\nu\in[0, 1]$ continuously interpolates between the nonsmooth $(\nu=0)$ and smooth $(\nu=1)$ settings. 
Nesterov introduced accelerated gradient methods capable of universally achieving optimal convergence rates without prior knowledge of the smoothness parameters of the objective.  Notably, the universal fast gradient method exhibits a convergence rate of 
\begin{equation} \label{eq:ufgm-rate}
\psi(x^k)-\psi(x^*) \le \Ocal\left(\frac{L_\nu^{\frac{2}{1+\nu}}D_0^2}{\epsilon^{\frac{1-\nu}{1+\nu}} k^{\frac{1+3\nu}{1+\nu}}} + \epsilon\right).
\end{equation}
where $D_0\coloneqq \norm{x^0-x^*}$ denotes the initial distance to the minimizer. 
An appealing property of this method is its independence from explicit knowledge of the smoothness level $\nu$ and the parameter $L_\nu$.

Despite these attractive features, recent studies have highlighted significant limitations of the universal gradient methods.
One notable issue is that the universal gradient method relies on a line search subroutine to adapt to the unknown smoothness level, which makes it challenging to extend to stochastic optimization. 
More critically, the performance of universal methods is sensitive to the predetermined target accuracy level  $\epsilon$. As $\epsilon$ must be set in advance, the method does not have an anytime convergence guarantee, which is crucial for practical implementations where iterations can be halted at an arbitrary stage. 
In addition, as pointed out by \citet{orabona2023nesterov}, an optimally set $\epsilon$ should depend on $D_0$, which is unfortunately unknown beforehand. Setting the value of $\epsilon$ without prior knowledge of the underlying problem structure often results in suboptimal trade-offs between the two error terms in \eqref{eq:ufgm-rate}. This turns out to be a critical problem in nonsmooth optimization and online learning~\citep{mcmahan2012no, carmon2022making, defazio23learning, cutkosky2017online, zhang2022parameter}. 
Although \cite{kreisler2024accelerated} achieves a near-optimal rate in the smooth setting, it does not have a theoretical guarantee for nonsmooth or weakly smooth problems. A key strength of their work is providing guarantees for unbounded domains, albeit with a suboptimal rate in that setting. \cite{attia2024free} investigates parameter-free methods and notes that most existing approaches still require certain problem-specific information, such as the Lipschitz constant and $D_0$. In contrast, our algorithms only require an estimate of a lower bound for $D_0$, and this incurs only a minor additional cost. Moreover, \cite{khaled2024tuning} further shows that it is impossible to develop a truly tuning-free algorithm for smooth or nonsmooth stochastic convex optimization when the domain is unbounded.

In this paper, we demonstrate that near-optimal parameter-free convergence rates can be achieved for convex H\"{o}lder smooth optimization, up to logarithmic factors. Specifically, instead of fixing the target $\epsilon$, we set variable target levels that dynamically change based on the optimization trajectory. As mentioned earlier, an optimal level shall depend on the distance to the minimizer and is difficult to compute. Motivated by the Distance over Gradients (DOG) style stepsize~\citep{carmon2022making, ivgi2023dog}, we approximate $\norm{x^0-x^*}$ by the maximum distance to the iterates and use this knowledge to choose stepsize in the accelerated gradient method. 
Leveraging the technique of distance adaptation, we are able to obtain a parameter-free and anytime convergence rate of
\begin{equation}\label{ineq:conv rate}
\mathcal{O}\left(\frac{D_0^{1+\nu}\hat{L}_\nu\log^2 e\frac{D_0}{\bar{r}}}{k^{\frac{1+3\nu}{2}}}\right),
\end{equation}
where $\hat{L}_\nu$ is the locally H\"{o}lder smoothness constant  and $\bar{r}$ is an initial guess for $4D_0$,
without requiring any predefined accuracy level, knowing the smoothness level $\nu$ or the H\"{o}lder constant.

In addition, we propose a line-search-free accelerated method that achieves optimal convergence rates for both H\"{o}lder smooth and stochastic optimization problems. To eliminate the need for line search, we adopt a bounded domain assumption, as originally introduced by~\citet{rodomanov2024universal}. Different from their approach, which explicitly requires the domain diameter $D$ to set stepsizes, our method exploits distance adaptation to approximate $D$. This can be particularly appealing as computing the diameter of a general convex set can be computationally intractable. Moreover, by estimating $D$ through the observed distance from the initial point, our method naturally adopts more adaptive stepsizes in large domains. 
Theorem~\ref{theorem:stochastic} characterizes the convergence rate of this algorithm in the stochastic setting. Since we adopt the bounded domain assumption, the convergence rate remains the same as that in~\eqref{ineq:conv rate}, with $D_0$ replaced by $D$. Although we cannot guarantee the potential gap between $D_0$ and $D$,
experimental results support our theoretical insights and demonstrate the practical effectiveness of our approach.

\subsection{Related work}

The increasing computational cost associated with hyperparameter tuning has driven significant research interest in developing adaptive or parameter-free algorithms. The online learning community has extensively studied parameter-free optimization, particularly focusing on achieving nearly-optimal regret bounds without prior knowledge of domain boundedness or the distance to the minimizer~\citep{mcmahan2012no, mcmahan2014unconstrained, orabona2014simultaneous, cutkosky2017online, zhang2022parameter}. 
A recent breakthrough has been made by~\citet{carmon2022making}, which moves beyond regret analysis and focuses directly on stochastic optimization. This algorithm appears to be conceptually simpler and motivates a few more practical SGD algorithms, such as~\citet{ivgi2023dog, defazio23learning, moshtaghifar2025dada}.  A notable related study to our work is \citet{moshtaghifar2025dada}, which applied the distance adaptation technique to Nesterov's dual averaging method. They have offered convergence guarantees across various problem classes, including nonsmooth, smooth, ($L_0,L_1$)-smooth functions, and many others. However, the convergence rate for the H\"older smooth problem remains suboptimal.

The concept of universal gradient methods adapting to H\"older smoothness was pioneered by \citet{nesterov2015universal}. Subsequent works extended this approach to nonconvex optimization~\citep{ghadimi2019generalized}, stochastic settings~\citep{gasnikov2018universal} and stepsize adjustment~\citep{oikonomidis2024adaptive}. It has been demonstrated that normalized gradient stepsizes~\citep{shor2012minimization} can automatically adapt to H\"older smoothness without requiring line search, as shown in~\citet{Grimmer2019Conver, orabona2023normalized}. 
Recently, \citet{rodomanov2024universal} proposed a line-search-free universally optimal method that is robust to stochastic noise in gradient estimations.  However, this method requires the domain to be bounded and the diameter to be known. 
In parallel to universal methods, bundle-type methods~\citep{LNN,Ben-Tal05} have emerged as an effective approach for nonsmooth optimization, enabling self-adaptation to H\"older smoothness~\citep{lan2015bundle}. 
However, these methods often involve solving a complex cut-constrained subproblem and lack straightforward extensions to stochastic settings. The Polyak stepsize method~\citep{polyak1987introduction} also exhibits self-adaptation to smoothness~\citep{hazan2019revisiting} and Lipschitz parameters~\citep{mishkin2024directional}. 
It can be seen as a special case of the bundle-level method~\citep{devanathan2023polyak}. While imposing Nesterov's acceleration technique~\citep{deng2024uniformly} in the Polyak stepsize can universally achieve the optimal rates for  H\"older smooth problems, it typically requires knowledge of the optimal value. 

Finally, it is important to emphasize that the parameter-free algorithms discussed in this paper differ from adaptive gradient methods~\citep{Duchi2011wu}, which have been substantially studied in the literature~\citep{Kingma2014adam, reddi2018convergence, tieleman2012lecture}. While adaptive gradient methods primarily focus on adjusting to Lipschitz constants or constructing a preconditioner to approximate the Hessian inverse~\citep{zhang2024adam, Vyas2025SOAP, liu2023sophia}, parameter-free algorithms in our context do not rely on such adaptation mechanisms.

\section{Preliminaries}~\label{sec:2}

Let $\mathbb{R}^d$ denote the $ d$-dimensional Euclidean space. Let $\|\cdot\|=\sqrt{\inner{\cdot}{\cdot}}$ be the norm associated with inner product $\inner{\cdot}{\cdot}$. Its dual norm is defined by $\|s\|_*=\max_{\|x\|=1}\langle s, x \rangle, $ where $s\in \mathbb{R}^d$.
For a convex function $f:\mathbb{R}^d\to\mathbb{R}\cup\{+\infty\}$, we use $\nabla f(x)$ to denote a (sub)gradient of $f(\cdot)$ at the point $x$. We use ${\Bcal}_{\delta}(x)=\{y\in \mathbb{R}^d: \|y-x\|\le \delta\}$ to denote the closed ball of radius $\delta$ centered at $x$.
We define $I_\nu$ as $(\frac{1}{1-\nu })^{\frac{1+\nu }{2}}$ when $\nu<1$ and $1$ when $\nu = 1$ to simplify the expressions. 

A convex function $f$ is said to be \emph{locally \holder{}} at $z$ with radius $r$ if there exists a mapping $M_{\nu}:\mathbb{R}^d \times (0,+\infty) \to (0,+\infty) $ such that, for any $z\in\Rbb^d$, for any $ x, y\in {\Bcal}_{r}(z)$ ($r>0$), we have
\begin{equation}\label{assumption 1:local smooth}
\|\nabla f(x)-\nabla f(y)\|_* \le  M_{\nu}(z, r) \, \|x-y\|^{\nu}.
\end{equation}

\citet{nesterov2015universal} considered the \emph{global \holder{}} functions \eqref{eq:holder-smooth} where the smoothness mapping $M_{\nu}(z, r)$ is reduced to a constant $L_\nu$. 
However, we will show that by incorporating both line search and distance adaptation, 
we can guarantee the boundedness of the iterates. Consequently, the complexity relies on the local H\"{o}lder smoothness, which is defined by 
\[\widehat{M}_{\nu}\coloneqq M_{\nu}(x^*, 3D_0)<+\infty.
\]

\section{The accelerated distance-adaptive method}\label{sec:3}

\paragraph{Motivation}
Before describing the main algorithm, we first shed light on the intuition of Nesterov's universal gradient methods~\citep{nesterov2015universal}. From the definition of global H\"older smoothness~\eqref{eq:holder-smooth}, we have the H\"older descent condition:
\[
f(y)\leq f(x) + \langle\nabla f(x), y-x\rangle + \frac{L_\nu}{1+\nu}\|x-y\|^{1+\nu}.
\]
This inequality can be translated into an inexact variant of the usual Lipschitz smooth condition:
\begin{equation}\label{eq:inexact-lip-smooth}
f(y)\leq f(x) + \langle\nabla f(x), y-x\rangle + \frac{\gamma(L_{\nu}, \delta)}{2}\|x-y\|^{2} + \frac{\delta}{2}.
\end{equation}
where $\gamma(L, \delta)\coloneqq (\frac{1-\nu}{1+\nu}\frac{1}{\delta})^{\frac{1-\nu}{1+\nu}}L^{\frac{2}{1+\nu}}$, and $\delta\in(0,\infty)$ is the trade-off parameter. Since the H\"{o}lder exponent may be unknown a priori, Nesterov proposed to use pre-specify $\delta=\epsilon$, where $\epsilon$ is the target accuracy, and then perform line search over $\gamma(L_\nu,\delta)$ to satisfy the inexact Lipschitz smoothness condition~\eqref{eq:inexact-lip-smooth}.

The primary challenge with this approach is selecting an optimal value for $\delta$. This parameter controls a fundamental trade-off: a smaller $\delta$ improves the approximation accuracy using a smooth surrogate, but it increases the effective smoothness constant $\gamma(L_\nu,\delta)$. However, choosing the optimal $\delta$ necessitates the knowledge of the distance to the minimizer, and cannot be done easily when the domain is unconstrained. Moreover, even if the domain is bounded with $D=\max_{x,y\in\dom g} \|x-y\|<\infty$, this value can poorly overestimate $\|x-x^*\|$.

To address the limitation in the existing universal fast gradient method, 
we present the accelerated distance-adaptive method (AGDA) in Algorithm~\ref{alg:agda}.
Our algorithm can be viewed as a variant of the accelerated regularized dual averaging method~\citep{tseng2008accelerated,nesterov2015universal, xiao2009dual} which involves the triplets $\{x^k,v^k, y^k\}$.
The main difference from the prior work is the new stepsize and line search procedure to adapt to the distance to the minimizer.

Specifically, leveraging the distance adaptation technique~\citep{moshtaghifar2025dada,ivgi2023dog},  we approximate $D_0$ by a sequence of values:
\begin{equation}
    \bar{r}_k = \max\{ \bar{r}_{k-1}, r_k\},\ \ \text{where }\ r_k=\norm{x^0-v^k},\ k \ge  0, \label{r_k}
\end{equation}
and then set the averaging sequence $a_k$ based on the distance estimation:
\begin{align}
a_{k+1}=A_{k+1} - A_k, \ \text{where }\ A_{k+1}\coloneqq\brbra{\tsum_{i=0}^{k}\bar{r}_i^\frac{1}{2}}^2, \ k=0,1,2,\ldots \label{A_k}
\end{align}
To deal with the unknown H\"older smooth parameter, we invoke a line search procedure to find the appropriate value of $\beta_{k+1}$, which satisfies  
\begin{equation}\label{line search stop criterion 1}
f(y^{k+1}) \leq f(x^{k+1}) + \langle\nabla f(x^{k+1}), y^{k+1} - x^{k+1}\rangle\\
+\frac{\beta_{k+1}}{64\tau_{k}^2A_{k+1}}\|y^{k+1}-x^{k+1}\|^2+\frac{\tau_{k}\eta_{k}}{2},
\end{equation}
where $\eta_k$ measures the inexactness in Lipschitz smooth approximation and $\tau_k$ is introduced by Nesterov's momentum ($\tau_k=1$ in the non-accelerated method). As mentioned earlier,  $\eta_k$ is set to a fixed $\delta$ in the universal (fast) gradient method. Different from the earlier approach, we simply take  $\eta_{k} = \frac{\beta_{k+1}\bar{r}_k^2-\beta_{k}\bar{r}_{k-1}^2}{8a_{k+1}}$, which  dynamically adjusts during the optimization process.

\begin{algorithm}
\renewcommand{\algorithmicrequire}{\textbf{Input:}}
\renewcommand{\algorithmicensure}{\textbf{Output:}}
\caption{\underline{A}ccelerated \underline{G}radient Method with \underline{D}istance \underline{A}daption~(AGDA)}\label{alg:agda}
\begin{algorithmic}[1]
\Require $x^0$ and $\bar{r}$;
\State Initialize $A_0=0$, $\bar{r}_{-1}=\bar{r}_0=\bar{r}$ and $\beta_0$ be a small constant, like $10^{-3}$.
\State Set initial solution: $v^0=y^0=x^0$

\For{$k=0,1,...,K-1$}\label{alg:agda line search process}
\State Set $r_k$ and $\bar{r}_k$ according to \eqref{r_k};
\State Update $a_{k+1}$ and $A_{k+1}$ by \eqref{A_k}, and set $\tau_k=\frac{a_{k+1}}{A_{k+1}}$;
\State Set $x^{k+1}=\tau_kv^k+(1-\tau_k)y^k$;
\State Apply the line search to find $\beta_{k+1}$ such that $l_k(\beta_{k+1})\ge 0$;
\State Compute $v^{k+1} = \argmin_{y} \big\{ \frac{\beta_{k+1}}{2} \|x^0- y\|^2+\sum_{i=1}^{k+1} a_{i}(\langle\nabla f(x^{i}), y - x^{i}\rangle+g(y))\big\}$;
\State Set $y^{k+1}=\tau_kv^{k+1}+(1-\tau_k)y^k$;
\EndFor
\Ensure $z^K=\arg\min_{y\in\{y^0,y^1,\ldots,y^{K}\}}\psi(y)$
\end{algorithmic}
\end{algorithm}

\paragraph{Line search}  We describe how to find a suitable $\beta_{k+1}$ that satisfies the descent property~\eqref{line search stop criterion 1}. Note that in this inequality $y^{k+1}$ is dependent on $\beta_{k+1}$, we can describe the searching process of $\beta_{k+1}$ by formulating it as finding a root of a continuous function. To formalize the idea, we define the auxiliary function $l_k(\beta)$ as follows: 
\begin{equation}\label{line search function}
\begin{aligned}
l_k(\beta):=&
 -f(y^{k+1}(\beta))+f(x^{k+1}) + \langle\nabla f(x^{k+1}), y^{k+1}(\beta)-x^{k+1}\rangle \\
  &+\frac{\beta}{64\tau_{k}^2A_{k+1}}\|y^{k+1}(\beta)-x^{k+1}\|^2+\frac{\beta\bar{r}_k^2-\beta_k\bar{r}_{k-1}^2}{16A_{k+1}},
\end{aligned}
\end{equation}
where $y^{k+1}(\beta)=\tau_kv^{k+1}(\beta)+(1-\tau_k)y^k$ is the trial point, and $v^{k+1}(\beta):=\argmin_{x}\sum_{i=1}^{k+1}a_i(\langle \nabla f(x^i), x
\rangle+g(x))+\frac{\beta}{2}\|x-x^0\|^2$. 
Our line search consists of two stages, each involving an iterative procedure. 
We assume the first stage and the second stage can be terminated in $i'_k$-th and $i^*_k$-th iterations, respectively.
In the first stage, we find the smallest value $i\in\{1,2,\ldots,\}$ such that $l_k(2^{i-1}\beta_k)$ is nonnegative and set $i_k'=i$. 
Consequently, we will have two situations: 
\begin{enumerate}
\item $i'_k=1$, i.e., $l_k(\beta_k)\geq0$, then we set $i_k^*=0$ and complete the line search;
\item $i'_k>1$, then we perform a binary search to find an approximate root of $l_k(\cdot)=0$ in the interval $[2^{i'_k-2}\beta_k$, $2^{i'_k-1}\beta_k]$. The search stops when the interval width is no more than a tolerance level of $\epsilon_{k}^l=\frac{\beta_0}{2k^2}$. We set $\beta_{k+1}$ as the right endpoint of the final interval.
\end{enumerate}

We now establish the correctness and computational efficiency of the line search procedure. 

\begin{proposition}\label{proposition:line search finite}
Suppose $f(\cdot)$ is locally \holder{}~\eqref{assumption 1:local smooth} in $\mathcal{B}_{3D_0}(x^*)$.
In Algorithm~\ref{alg:agda}, for any $k\geq0$, at least one of the following two conditions  holds:
\begin{enumerate}
\item $l_k(\beta_k)\geq0$;
\item there exists $\beta_{k+1}^*>\beta_k$ such that $l_k(\beta_{k+1}^*)=0$, and for all $\beta>\beta_{k+1}^*,$ we have $l_k(\beta)>0.$
\end{enumerate}
Consequently, we have $\beta_{k+1}\leq \mathcal{O}(k^{\frac{3-3\nu}{2}})$. 
Moreover, the total number of iterations required by the line search in  Algorithm~\ref{alg:agda} is $\sum_{k=0}^{K-1}(i_k'+i_k^*)=\mathcal{O}(K\log K)$.
\end{proposition}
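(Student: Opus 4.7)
The plan is to establish the four claims in sequence: continuity and the existence/structure of the roots of $l_k$, the polynomial bound on $\beta_{k+1}^*$, and then the per-iteration and aggregate line-search cost.

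First, I would argue that $\beta\mapsto l_k(\beta)$ is continuous and diverges to $+\infty$. The map $v^{k+1}(\beta)$ is the argmin of a strongly convex problem in which only a scalar quadratic term depends (linearly) on $\beta$, so it is continuous in $\beta$; the same then holds for $y^{k+1}(\beta)$ and hence for every term of \eqref{line search function}. As $\beta\to\infty$ the quadratic regularizer dominates the subproblem defining $v^{k+1}(\beta)$, forcing $v^{k+1}(\beta)\to x^0$, so that $y^{k+1}(\beta)$ stays in a bounded set. The first three terms of $l_k(\beta)$ are therefore bounded, while the last term $\tfrac{\beta\bar r_k^2-\beta_k\bar r_{k-1}^2}{16A_{k+1}}$ grows linearly in $\beta$, giving $l_k(\beta)\to+\infty$.

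If $l_k(\beta_k)\geq 0$ we are in case~1. Otherwise set $\beta_{k+1}^*:=\sup\{\beta\geq\beta_k: l_k(\beta)\leq 0\}$, which is finite by the limit above and nonempty by assumption. Continuity then forces $l_k(\beta_{k+1}^*)=0$, and by definition of supremum we have $l_k(\beta)>0$ for all $\beta>\beta_{k+1}^*$. This is case~2. For the quantitative bound on $\beta_{k+1}^*$, the key step is to convert local H\"{o}lder smoothness into the inexact Lipschitz form \eqref{eq:inexact-lip-smooth} and to match its two error terms with the quadratic and constant ``cushion'' in the descent condition. Assuming (and separately verifying) that $x^{k+1}$ and the trial point $y^{k+1}(\beta)$ both lie in $\Bcal_{3D_0}(x^*)$, so that $\widehat M_\nu$ applies, I would choose $\delta=\tfrac{\beta\bar r_k^2-\beta_k\bar r_{k-1}^2}{8A_{k+1}}$ in \eqref{eq:inexact-lip-smooth}; this reduces the condition $l_k(\beta)\geq 0$ to the single inequality $\gamma(\widehat M_\nu,\delta)\leq \tfrac{\beta A_{k+1}}{32a_{k+1}^2}$. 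Substituting the explicit form of $\gamma$ from the paragraph preceding \eqref{eq:inexact-lip-smooth} and rearranging gives an algebraic condition on $\beta$ of the shape
\[
\beta^{2/(1+\nu)}\,(\beta\bar r_k^2-\beta_k\bar r_{k-1}^2)^{(1-\nu)/(1+\nu)}\ \geq\ C\,\widehat M_\nu^{2/(1+\nu)}\frac{a_{k+1}^2}{A_{k+1}^{2\nu/(1+\nu)}}.
\]
Plugging in the growth rates $A_{k+1}=\Theta(k^2\bar r_k)$ and $a_{k+1}=\Theta(k\bar r_k)$ implied by~\eqref{A_k}, together with the boundedness of $\bar r_k$ discussed below, and solving for $\beta$ yields $\beta_{k+1}^*\leq \Ocal(k^{(3-3\nu)/2})$.

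The step I expect to be the main obstacle is justifying that the iterates remain in $\Bcal_{3D_0}(x^*)$ throughout, so that the local H\"{o}lder constant $\widehat M_\nu$ can legitimately be invoked. This requires an induction that, along the whole trajectory, controls $\bar r_k$ by $\Ocal(D_0)$; the induction must simultaneously maintain boundedness of $v^k,y^k,x^k$ using the distance-adaptive rule~\eqref{r_k}--\eqref{A_k} and the descent guarantee of the accelerated scheme. Once this invariant is in place, the remaining iteration counts are routine: the doubling stage needs $i_k'\leq 1+\log_2(\beta_{k+1}^*/\beta_k)=\Ocal(\log k)$ steps because $\beta_{k+1}^*/\beta_k\leq\Ocal(k^{(3-3\nu)/2})$, and the binary search halves an interval of initial width at most $\beta_{k+1}^*$ down to the tolerance $\epsilon_k^l=\beta_0/(2k^2)$ in $i_k^*=\Ocal(\log k)$ steps. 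Summing over $k=0,\ldots,K-1$ gives the claimed $\sum_{k=0}^{K-1}(i_k'+i_k^*)=\Ocal(K\log K)$ bound.
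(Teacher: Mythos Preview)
Your outline follows the same route as the paper, and the qualitative steps (continuity of $l_k$, $l_k\to+\infty$, the $\sup$ construction for $\beta_{k+1}^*$, the doubling/bisection counts) are sound and match the paper's Lemmas~\ref{lemma:one-step-boundedness} and~\ref{lemma:the bound of beta 3}.

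There is, however, a genuine gap in the step ``solving for $\beta$ yields $\beta_{k+1}^*\leq\Ocal(k^{(3-3\nu)/2})$''. First a minor slip: after substituting $\gamma$ and rearranging, the left side should be $\beta\,(\beta\bar r_k^2-\beta_k\bar r_{k-1}^2)^{(1-\nu)/(1+\nu)}$, not $\beta^{2/(1+\nu)}(\cdots)^{(1-\nu)/(1+\nu)}$. More importantly, you cannot simply ``solve for $\beta$'', because the left side contains $\beta_k$: replacing $\beta\bar r_k^2-\beta_k\bar r_{k-1}^2$ by $\beta\bar r_k^2$ would enlarge the left side and hence shrink the resulting threshold, which is \emph{not} a valid upper bound on $\beta_{k+1}^*$. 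The paper closes this by a second induction on $k$ (Lemma~\ref{lemma:the bound of beta 3}): it posits the ansatz $\beta_k\leq c_\nu\bar r_{k-1}^\nu k^{(3-3\nu)/2}+\sum_{i\le k}\epsilon_i^l$ and checks that inserting $\beta=c_\nu\bar r_k^\nu(k+1)^{(3-3\nu)/2}+\sum_{i\le k}\epsilon_i^l$ into the left side already exceeds the right side, the key algebraic ingredient being the elementary bound $(k+1)^u-k^u\ge c_u(k+1)^{u-1}$ (Lemma~\ref{lemma:diff of k^u}). So there are really two inductions at play, not one: the trajectory-boundedness induction you correctly flagged, and a separate induction controlling the growth of $\beta_k$.
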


\begin{remark}
Proposition~\ref{proposition:line search finite} implies that our method requires an additional $\mathcal{O}(\log K)$ function evaluations compared to other algorithms~\citep{nesterov2015universal}. However, it is worth emphasizing that our line search procedure only requires access to function values, whereas the line search in the universal fast gradient method involves both gradient and function value evaluations.
\end{remark}

Next, we provide two lemmas to obtain an important upper bound on the convergence rate and the guarantee of the boundedness of the optimization trajectory.
\begin{lemma}\label{lemma:convergence error}
In Algorithm~\ref{alg:agda}, suppose $\bar{r}\leq 4D_0$. If for $i=0,1,\ldots,k$, the line searches are successful and we have $l_i(\beta_{i+1})\geq0$, then we have the following convergence property:
\begin{equation}
    \begin{aligned}\label{lemma 6:ineq 1}
 \psi(y^{k+1})-\psi(x^*) \leq \frac{\beta_{k+1}(D_0^2-D_{k+1}^2)}{2A_{k+1}}+\frac{\beta_{k+1}\bar{r}_{k+1}^2}{8A_{k+1}}.
\end{aligned}
\end{equation}
\end{lemma}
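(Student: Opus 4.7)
My plan is to establish the bound by an estimate-sequence induction tailored to the dual averaging recursion and the inexact descent produced by the line search. Define the linearizations $\ell_i(y)\coloneqq f(x^i)+\langle\nabla f(x^i),y-x^i\rangle+g(y)$ and the cumulative potential $B_k(y)\coloneqq\tfrac{\beta_k}{2}\|y-x^0\|^2+\sum_{i=1}^{k}a_i\ell_i(y)$, which is $\beta_k$-strongly convex with minimizer $v^k$. The invariant I will prove by induction on $k$ is
\[
A_k\psi(y^k)\le B_k(v^k)+\tfrac{3\beta_k\bar{r}_{k-1}^2}{32}.
\]
Once this is in place, combining it with the standard inequality $B_{k+1}(v^{k+1})\le A_{k+1}\psi(x^*)+\tfrac{\beta_{k+1}}{2}(D_0^2-D_{k+1}^2)$, which follows from termwise convexity $\ell_i(x^*)\le\psi(x^*)$ and strong convexity of $B_{k+1}$ at $v^{k+1}$, and dividing by $A_{k+1}$ yields the lemma after noting $\tfrac{3}{32}\le\tfrac{1}{8}$ and $\bar{r}_k\le\bar{r}_{k+1}$. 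The base case $k=0$ holds trivially since $A_0=0$ and $v^0=x^0$.

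For the inductive step I multiply the descent inequality $l_k(\beta_{k+1})\ge 0$ by $A_{k+1}$, use $y^{k+1}-x^{k+1}=\tau_k(v^{k+1}-v^k)$ to collapse the quadratic term to $\tfrac{\beta_{k+1}}{64}\|v^{k+1}-v^k\|^2$, identify $\tfrac{A_{k+1}\tau_k\eta_k}{2}=\tfrac{\beta_{k+1}\bar{r}_k^2-\beta_k\bar{r}_{k-1}^2}{16}$, and apply convexity of $f$ and $g$ along $y^{k+1}=\tau_k v^{k+1}+(1-\tau_k)y^k$ to obtain
\[
A_{k+1}\psi(y^{k+1})\le A_k\psi(y^k)+a_{k+1}\ell_{k+1}(v^{k+1})+\tfrac{\beta_{k+1}}{64}\|v^{k+1}-v^k\|^2+\tfrac{\beta_{k+1}\bar{r}_k^2-\beta_k\bar{r}_{k-1}^2}{16}.
\]
I then use $\beta_k$-strong convexity of $B_k$ at $v^k$ together with the recursion $B_{k+1}(y)=B_k(y)+a_{k+1}\ell_{k+1}(y)+\tfrac{\beta_{k+1}-\beta_k}{2}\|y-x^0\|^2$ evaluated at $y=v^{k+1}$ to derive
\[
a_{k+1}\ell_{k+1}(v^{k+1})\le B_{k+1}(v^{k+1})-B_k(v^k)-\tfrac{\beta_k}{2}\|v^{k+1}-v^k\|^2-\tfrac{\beta_{k+1}-\beta_k}{2}\|v^{k+1}-x^0\|^2.
\]
Substituting, invoking the inductive hypothesis, and bringing $B_{k+1}(v^{k+1})$ to the left reduces the step to showing that the residual
\[
R_k\coloneqq\tfrac{\beta_{k+1}}{64}\|v^{k+1}-v^k\|^2-\tfrac{\beta_k}{2}\|v^{k+1}-v^k\|^2-\tfrac{\beta_{k+1}-\beta_k}{2}\|v^{k+1}-x^0\|^2+\tfrac{\beta_{k+1}\bar{r}_k^2-\beta_k\bar{r}_{k-1}^2}{16}
\]
is at most $\tfrac{3(\beta_{k+1}\bar{r}_k^2-\beta_k\bar{r}_{k-1}^2)}{32}$, which is exactly the increment needed to propagate the invariant.

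The main obstacle is that $\tfrac{\beta_{k+1}}{64}\|v^{k+1}-v^k\|^2$ is not dominated by $-\tfrac{\beta_k}{2}\|v^{k+1}-v^k\|^2$ when the line search returns $\beta_{k+1}>32\beta_k$, an outcome the algorithm does not exclude. The key trick is to merge the two negative quadratics via the shifted-quadratic identity
\[
\tfrac{\beta_k}{2}\|u-v^k\|^2+\tfrac{\beta_{k+1}-\beta_k}{2}\|u-x^0\|^2\ge\tfrac{\beta_{k+1}}{2}\|u-\bar{y}\|^2,\quad\bar{y}\coloneqq\tfrac{\beta_k v^k+(\beta_{k+1}-\beta_k)x^0}{\beta_{k+1}},
\]
so that the combined penalty becomes $\beta_{k+1}$-strong around $\bar{y}$. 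After $\|v^{k+1}-v^k\|^2\le 2\|v^{k+1}-\bar{y}\|^2+2\|v^k-\bar{y}\|^2$, the $\|v^{k+1}-\bar{y}\|^2$ contributions carry coefficient $-\tfrac{15\beta_{k+1}}{32}\le 0$ and can be dropped, while $\|v^k-\bar{y}\|=\tfrac{\beta_{k+1}-\beta_k}{\beta_{k+1}}\|v^k-x^0\|\le\tfrac{\beta_{k+1}-\beta_k}{\beta_{k+1}}\bar{r}_k$ together with the algebraic fact $(\beta_{k+1}-\beta_k)^2/\beta_{k+1}\le\beta_{k+1}-\beta_k$ yields the clean bound $\tfrac{(\beta_{k+1}-\beta_k)\bar{r}_k^2}{32}$ on the quadratic part of $R_k$, uniformly in the ratio $\beta_{k+1}/\beta_k$. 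Combining with $\tfrac{\beta_{k+1}\bar{r}_k^2-\beta_k\bar{r}_{k-1}^2}{16}$ and using $\bar{r}_k\ge\bar{r}_{k-1}$ to absorb the excess $(\beta_{k+1}-\beta_k)\bar{r}_k^2$ into $\beta_{k+1}\bar{r}_k^2-\beta_k\bar{r}_{k-1}^2$ gives the desired bound on $R_k$ and closes the induction.
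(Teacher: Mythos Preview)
Your argument is correct and establishes the lemma, but it handles the key quadratic term differently from the paper. The paper's proof simply splits the coefficient $\tfrac{\beta_{k+1}}{64}=\tfrac{\beta_k}{64}+\tfrac{\beta_{k+1}-\beta_k}{64}$ and then crudely bounds $\|v^{k+1}-v^k\|^2\le 4\bar{r}_{k+1}^2$; the $\tfrac{\beta_k}{64}$ piece is dominated by the three-point-lemma term $\tfrac{\beta_k}{2}\|v^{k+1}-v^k\|^2$, while the $\tfrac{\beta_{k+1}-\beta_k}{64}\cdot 4\bar{r}_{k+1}^2$ piece becomes an additional telescoping error term $\tfrac{\beta_{k+1}\bar{r}_{k+1}^2-\beta_k\bar{r}_k^2}{16}$. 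Summing then yields $\tfrac{\beta_{k+1}\bar{r}_{k+1}^2}{8}$. Your shifted-center trick, merging $\tfrac{\beta_k}{2}\|\cdot-v^k\|^2+\tfrac{\beta_{k+1}-\beta_k}{2}\|\cdot-x^0\|^2\ge\tfrac{\beta_{k+1}}{2}\|\cdot-\bar{y}\|^2$, is more refined: it never appeals to $\bar{r}_{k+1}$, works uniformly in $\beta_{k+1}/\beta_k$, and delivers the slightly better constant $\tfrac{3}{32}$ before relaxing to $\tfrac{1}{8}$. The paper's route is shorter and more elementary; yours is sharper and keeps the invariant expressed solely in terms of past quantities $\bar{r}_{k-1}$, which is aesthetically cleaner for the induction. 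Both are valid.
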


\begin{lemma}\label{lemma:one-step-boundedness}
In Algorithm~\ref{alg:agda}, suppose that for all $i=0,1,\ldots,k$, the iterates $x^i,y^i$ and $v^i$ lie within the set $\Bcal_{3D_0}(x^*)$. Then, the line search in the $k$-th iteration terminates in a finite number of steps, and $x^{k+1},y^{k+1},v^{k+1}$ remain within $\Bcal_{3D_0}(x^*)$.
\end{lemma}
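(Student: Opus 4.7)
My plan is to split the argument into two halves: first I will invoke Proposition~\ref{proposition:line search finite} to show that the line search at step $k$ terminates in finitely many steps, and then I will combine Lemma~\ref{lemma:convergence error} with a short case analysis on $\bar{r}_{k+1}$ to prove that the new iterates remain inside $\Bcal_{3D_0}(x^*)$. Convexity of the ball takes care of $x^{k+1}$ and $y^{k+1}$ once I have controlled $v^{k+1}$.

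Since by hypothesis $v^k,y^k\in\Bcal_{3D_0}(x^*)$, the convex combination $x^{k+1}=\tau_kv^k+(1-\tau_k)y^k$ already lies in the same ball, so the local H\"older condition with constant $\widehat{M}_{\nu}=M_{\nu}(x^*,3D_0)$ is active at $x^{k+1}$. I can then apply Proposition~\ref{proposition:line search finite} directly: either $l_k(\beta_k)\ge 0$ and the search terminates at stage one with $\beta_{k+1}=\beta_k$, or there exists $\beta_{k+1}^*>\beta_k$ with $l_k(\beta_{k+1}^*)=0$, in which case the binary search on $[2^{i'_k-2}\beta_k,\,2^{i'_k-1}\beta_k]$ closes to tolerance $\epsilon_k^l$ in finitely many halvings and returns a valid $\beta_{k+1}$.

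With the line search at step $k$ successful, all earlier line searches also succeeded by the inductive hypothesis, so Lemma~\ref{lemma:convergence error} applies and yields
\begin{equation*}
0\le\psi(y^{k+1})-\psi(x^*)\le\frac{\beta_{k+1}(D_0^2-D_{k+1}^2)}{2A_{k+1}}+\frac{\beta_{k+1}\bar{r}_{k+1}^2}{8A_{k+1}},
\end{equation*}
which rearranges (writing $D_{k+1}=\|v^{k+1}-x^*\|$) to $D_{k+1}^2\le D_0^2+\tfrac14\bar{r}_{k+1}^2$. The inductive hypothesis delivers $r_i=\|x^0-v^i\|\le 4D_0$ for every $i\le k$, which together with $\bar{r}\le 4D_0$ gives $\bar{r}_k\le 4D_0$. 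I then split on the recursion $\bar{r}_{k+1}=\max\{\bar{r}_k,r_{k+1}\}$: the branch $\bar{r}_{k+1}=\bar{r}_k\le 4D_0$ yields $D_{k+1}^2\le 5D_0^2$, so $D_{k+1}\le\sqrt{5}D_0$; the branch $\bar{r}_{k+1}=r_{k+1}\le D_0+D_{k+1}$ substitutes back and produces the quadratic $3D_{k+1}^2-2D_0D_{k+1}-5D_0^2\le 0$, whose nonnegative root is $\tfrac{5}{3}D_0$. Either way $D_{k+1}<3D_0$, so $v^{k+1}\in\Bcal_{3D_0}(x^*)$, and since $y^{k+1}=\tau_kv^{k+1}+(1-\tau_k)y^k$ is a convex combination of points in the ball, $y^{k+1}\in\Bcal_{3D_0}(x^*)$ as well.

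The part I expect to be most delicate is invoking Proposition~\ref{proposition:line search finite} cleanly: its H\"older estimate is only guaranteed at trial points $y^{k+1}(\beta)\in\Bcal_{3D_0}(x^*)$, whereas the line search sweeps $\beta$ over a potentially large range. I plan to handle this by exploiting the fact that the regularizer $\frac{\beta}{2}\|x^0-y\|^2$ pulls $v^{k+1}(\beta)$ toward $x^0\in\Bcal_{D_0}(x^*)$ as $\beta$ grows, so any bracket that the binary search actually enters corresponds to $\beta$ large enough that $y^{k+1}(\beta)$ stays in $\Bcal_{3D_0}(x^*)$, leaving the local H\"older inequality valid throughout the search.
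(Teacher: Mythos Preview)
Your Part~2 argument is correct and in fact slightly cleaner than the paper's: where the paper bounds $D_{k+1}\le D_0+\tfrac12\bar r_{k+1}$ and then combines with $r_{k+1}\le D_0+D_{k+1}$ to deduce $r_{k+1}\le 4D_0$ and hence $D_{k+1}\le 3D_0$, you solve the quadratic directly and obtain the sharper $D_{k+1}\le\tfrac53 D_0$ in Case~2. Either route closes the boundedness half.

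The gap is in Part~1. You invoke Proposition~\ref{proposition:line search finite} to obtain finite termination of the line search, but in the paper's logical order that proposition is proved \emph{after} Lemma~\ref{lemma:one-step-boundedness} and its proof explicitly begins by citing the termination argument established inside Lemma~\ref{lemma:one-step-boundedness}. So appealing to Proposition~\ref{proposition:line search finite} here is circular. What is actually needed is a direct argument that $l_k(\beta)\ge 0$ for all sufficiently large $\beta$: this is what the paper supplies in its ``Part~1'', using Lemma~\ref{lemma:decrease step by step} to show $\|x^0-v^{k+1}(\beta)\|\to 0$ (hence $v^{k+1}(\beta)$, $y^{k+1}(\beta)\in\Bcal_{3D_0}(x^*)$ once $\beta\ge\hat\beta$) and then Lemma~\ref{lemma:quad upper bound} to exhibit an explicit threshold $\beta_{k+1}^{\mathrm{TH}}$ beyond which the descent condition~\eqref{line search stop criterion 1} is forced. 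You sketch exactly this mechanism in your final paragraph, but you frame it as a delicacy in ``invoking Proposition~\ref{proposition:line search finite} cleanly'' rather than as the argument that replaces the proposition altogether. Promote that sketch to the main proof of termination and drop the appeal to Proposition~\ref{proposition:line search finite}, and the circularity disappears.
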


One key insight of the distance adaptation is that the inequality~\eqref{lemma 6:ineq 1} implies the boundedness of $r_k$. To avoid the first step of Algorithm~\ref{alg:agda} from searching too far and breaking the boundedness of $r_k$, we should adopt a conservative distance estimation such that $\bar{r}\leq4D_0$. The smaller $\bar{r}$ is, the more likely it is that $\bar{r}\leq4D_0$ holds. Therefore, by repeatedly applying these two lemmas, we derive an important upper bound on the convergence rate and establish the boundedness guarantee of the optimization trajectory throughout all iterations.

\begin{theorem}\label{theorem:ball}
Suppose $f(\cdot)$ is locally \holder{}~\eqref{assumption 1:local smooth} in $\mathcal{B}_{3D_0}(x^*)$.
For any $k > 0$, it holds that
\begin{equation}\label{main context ineq 1}
\psi(y^k)-\psi(x^*)\leq \frac{\beta_k(D_0^2-D_k^2)}{2A_k}+\frac{\beta_k\bar{r}_k^2}{8A_k},
\end{equation}

Furthermore, if $\bar{r}\leq4D_0$, then it holds that $\|v^k-x^0\|\leq4D_0 $ and $\|v^k-x^*\|\leq 3D_0$, for all $k\geq0$. 
\end{theorem}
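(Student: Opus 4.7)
The plan is to prove both claims simultaneously by induction on $k$, with Lemma~\ref{lemma:one-step-boundedness} and Lemma~\ref{lemma:convergence error} providing the two engines that drive the induction forward. The joint induction hypothesis at stage $k$ will assert that (a) every iterate $x^i, y^i, v^i$ for $i = 0, 1, \ldots, k$ lies inside $\mathcal{B}_{3D_0}(x^*)$, and (b) each line search at iterations $0, 1, \ldots, k-1$ was successful in the sense that $l_i(\beta_{i+1}) \geq 0$. The base case $k=0$ is immediate from the initialization $v^0 = y^0 = x^0$, which gives $\|v^0 - x^*\| = D_0 \leq 3D_0$ and involves no prior line searches.

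For the inductive step, I would first apply Lemma~\ref{lemma:one-step-boundedness} at step $k$. Its hypothesis is exactly clause (a), and its conclusion simultaneously supplies (i) finite termination of the current line search, producing a valid $\beta_{k+1}$ with $l_k(\beta_{k+1}) \geq 0$, and (ii) the containment $x^{k+1}, y^{k+1}, v^{k+1} \in \mathcal{B}_{3D_0}(x^*)$. This extends both (a) and (b) to index $k+1$. With every line search through step $k$ now known to be successful, I would then invoke Lemma~\ref{lemma:convergence error} under the standing assumption $\bar{r} \leq 4D_0$ to deduce the convergence bound \eqref{main context ineq 1} for $y^{k+1}$, which is the theorem's inequality at the next index. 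The ``furthermore'' conclusion follows without further work: once (a) holds for every $k$, the triangle inequality yields
\[
\|v^k - x^0\| \leq \|v^k - x^*\| + \|x^0 - x^*\| \leq 3D_0 + D_0 = 4D_0,
\]
and the bound $\|v^k - x^*\| \leq 3D_0$ is precisely clause (a).

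The main obstacle here is conceptual rather than computational: the theorem is really a coupling of two interlocking lemmas, so care is required in threading the induction so that boundedness of iterates and successful termination of all prior line searches propagate together in lockstep. Neither of the two lemmas' hypotheses may be dropped at any stage, since Lemma~\ref{lemma:one-step-boundedness} needs ball containment to force termination of the next line search, and Lemma~\ref{lemma:convergence error} in turn needs the \emph{entire} record of successful line searches to conclude the averaged bound~\eqref{main context ineq 1}. Once this interleaving is set up, the proof reduces to routine appeals to the two lemmas together with the triangle inequality.
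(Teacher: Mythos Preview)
Your proposal is correct and follows essentially the same route as the paper: a joint induction that uses Lemma~\ref{lemma:one-step-boundedness} to propagate ball containment and line-search success, then Lemma~\ref{lemma:convergence error} to extract the bound~\eqref{main context ineq 1}. The only notable difference is that the paper runs a second, separate induction for the ``furthermore'' part (re-deriving $r_k\le 4D_0$ and $D_k\le 3D_0$ from the convergence bound), whereas you observe---correctly---that once clause~(a) is established for every $k$, the bounds $\|v^k-x^*\|\le 3D_0$ and $\|v^k-x^0\|\le 4D_0$ follow immediately by the triangle inequality, with no additional induction needed.
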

Since both $x^i$ and $y^i$ are convex combination of $v^i$, we immediately have that all the generated points $\{x^i,y^i\}_{i\ge 0}$ are in $\mathcal{B}_{3D_0}(x^*)$.

Next, we further refine the upper bound in \eqref{main context ineq 1}. Since $D_0^2-D_k^2\leq2D_0r_k$, $r_k\leq\bar{r}_k$ and $r_k\leq4D_0$, 
the upper bound in Theorem~\ref{theorem:ball} can be relaxed to 
\[\psi(y^k)-\psi(x^*)\le \frac{3\beta_k \bar{r}_k D_0}{2A_k}.\]
It remains to control the growth of  $\frac{\bar{r}_k}{A_k}$. To this end, we invoke a useful logarithmic bound~\citep{ivgi2023dog,liu2023stochastic} as follows.
\begin{lemma}\label{lemma:log term}
Let $(d_i)_{i=0}^{\infty}$ be a positive nondecreasing sequence. Then for any $K \geq 1$,
        \begin{equation}
        \min_{1\leq k\leq K}\frac{d_k}{\sum_{i=0}^{k-1}d_i} \leq\frac{\left(\frac{d_K}{d_0}\right)^{\frac{1}{K}}\log\frac{e d_K}{d_0}}{K}.
        \end{equation}
\end{lemma}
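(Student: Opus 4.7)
The plan is to combine a telescoping product identity with the elementary inequality $e^y-1 \le y\,e^y$ (valid for $y\ge 0$). Throughout, let $S_k := \sum_{i=0}^{k-1} d_i$ for $k\ge 1$ (so $S_1 = d_0$) and let $m := \min_{1\le k\le K} d_k/S_k$.

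First I would observe that $S_{k+1}/S_k = 1 + d_k/S_k$, so the telescoping product gives
\[
\prod_{k=1}^K \Big(1 + \frac{d_k}{S_k}\Big) \;=\; \frac{S_{K+1}}{d_0}.
\]
Taking logarithms and bounding each summand below by $\log(1+m)$ yields $K \log(1+m) \le \log(S_{K+1}/d_0)$, which rearranges to $m \le (S_{K+1}/d_0)^{1/K} - 1$. Applying the elementary inequality with $y = \log(S_{K+1}/d_0)/K \ge 0$ then gives
\[
m \;\le\; \frac{\log(S_{K+1}/d_0)}{K}\cdot \Big(\frac{S_{K+1}}{d_0}\Big)^{1/K}.
\]

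Next I would convert this into a bound involving only $d_K/d_0$. Because $d_K \ge m S_K$ by the definition of $m$, one has $S_K \le d_K/m$, and therefore $S_{K+1} = S_K + d_K \le d_K(1+m)/m$. Substituting into the inequality above, using the monotonicity $d_i \ge d_0$ (which in particular gives $S_{K+1} \ge (K+1) d_0$), and chasing the factor $(1+m)/m$ through the $1/K$-exponent and the outer logarithm, one arrives at the claimed bound
\[
m \;\le\; \frac{(d_K/d_0)^{1/K}\,\log\!\bigl(e\,d_K/d_0\bigr)}{K}.
\]

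The hardest step will be the last one. A naive application of the crude bound $S_{K+1} \le (K+1) d_K$ produces a bound weaker by a factor of roughly $\log(K+1)$. Removing this extra factor requires exploiting the monotonicity more carefully, essentially leveraging the implicit relation $m(1+m)^{K-1} \le d_K/d_0$ that follows from combining the steps above, together with a case split depending on whether $m$ is comparable to $1/K$ or significantly larger. The extremal sequence (for which the bound is tight) is flat up to index $\lceil 1/m\rceil$ and geometric with ratio $1+m$ thereafter, so the tight bound effectively mixes the contributions of these two regimes; the case $r := d_K/d_0 = 1$ already achieves equality with the constant sequence $d_i \equiv d_0$, which is a useful sanity check during the calculation.
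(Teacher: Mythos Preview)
Your approach diverges from the paper's after the first display, and the final step---the one you yourself flag as hardest---does not close. The route you propose is to substitute $S_{K+1}\le d_K(1+m)/m$ into the bound $m\le (S_{K+1}/d_0)^{1/K}-1$, which is equivalent to
\[
m(1+m)^{K-1}\;\le\;\frac{d_K}{d_0}=:r,
\]
and then to argue by a case split that this forces $m\le r^{1/K}\log(er)/K$. But this implication is simply false as an inequality in $m$ and $r$ alone: take $r=1$ and $K=10$; then $m(1+m)^{9}\le 1$ is satisfied by $m\approx 0.19$, whereas the target bound demands $m\le 1/10$. For a genuine monotone sequence with $d_K=d_0$ the sequence is constant and $m=1/K$ exactly, so the lemma holds, but that information has already been lost once you pass to $m(1+m)^{K-1}\le r$. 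Neither the lower bound $S_{K+1}\ge(K+1)d_0$ nor a case split on the size of $m$ recovers it.

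The paper's proof avoids this loss by telescoping a different quantity. Writing $R_k=d_k/S_k$, one has the identity
\[
\sum_{i=0}^{K-1}\frac{d_i}{d_{i+1}}
= R_K^{-1}+\sum_{i=0}^{K-1}\Bigl(1-\frac{d_i}{d_{i+1}}\Bigr)R_i^{-1}
\;\le\; \frac{1}{m}\Bigl(1+K-\sum_{i=0}^{K-1}\frac{d_i}{d_{i+1}}\Bigr),
\]
because each $R_i^{-1}\le 1/m$. Combined with the AM--GM bound $\sum_{i=0}^{K-1} d_i/d_{i+1}\ge K(d_0/d_K)^{1/K}$ and the elementary inequality $1-s\le\log(1/s)$, this yields the claimed bound directly. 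The crucial difference is that the paper tracks the ratios $d_i/d_{i+1}$, whose product telescopes to $d_0/d_K$ exactly, rather than $S_{K+1}/d_0$, whose relationship to $d_K/d_0$ is slack by a factor of order $K$.
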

To apply the above result, we simply take $d_k=\sqrt{\bar{r}_k}$. It shows  there is always some $k<K$ where the error $\frac{\bar{r}_k}{A_k}$ is bounded by $\mathcal{O} \brbra{\frac{\log^2(\bar{r}_T/\bar{r})}{K^2}}$.

Next, we bring all the pieces together. 
As pointed out earlier, the line search ensures that $\beta_{k+1}$ is order up to $\mathcal{O}(k^{\frac{3-3\nu}{2}})$. Together with the bound over distance-adaptive term $\frac{\bar{r}_k}{A_k}$, we arrive at our final convergence rate in the following theorem. 
\begin{theorem}\label{theorem:complexity}
Suppose all the assumptions of Theorem~\ref{theorem:ball} hold. Then,  Algorithm \ref{alg:agda} exhibits a convergence rate that
\begin{equation}\label{theorem2}
\psi(y^{k^*})-\psi(x^*)\in\mathcal{O}\left(\frac{\widehat{M}_{\nu}D_0^{1+\nu}\left(\frac{4D_0}{\bar{r}}\right)^{1/k}\log^2 e\frac{D_0}{\bar{r}}}{K^{\frac{1+3\nu}{2}}}\right),
\end{equation}
where $k^*=\mathop{\arg\min}\limits_{0\le i\le k}\frac{\bar{r}_k^{{1}/{2}}}{\sum_{i=0}^{k-1}\bar{r}_i^{1/2}}$.
\end{theorem}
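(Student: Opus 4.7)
The plan is to chain together three ingredients already in place: the per-iterate bound of Theorem~\ref{theorem:ball}, the growth estimate on $\beta_k$ from Proposition~\ref{proposition:line search finite}, and the distance-adaptive logarithmic rate of Lemma~\ref{lemma:log term}.

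First, I would simplify the right-hand side of~\eqref{main context ineq 1} using the boundedness of the iterates. Theorem~\ref{theorem:ball} guarantees $\|v^k-x^0\|\le 4D_0$, so $r_k\le 4D_0$, and by the update rule~\eqref{r_k} together with the hypothesis $\bar{r}\le 4D_0$ also $\bar{r}_k\le 4D_0$ for every $k$. The triangle inequality gives $D_0-D_k\le r_k$, so whenever $D_k\le D_0$ we have $D_0^2-D_k^2\le 2D_0 r_k\le 2D_0\bar{r}_k$, and otherwise the term is negative and may be discarded. Bounding the second summand of~\eqref{main context ineq 1} by $\bar{r}_k^2\le 4D_0\bar{r}_k$ and adding the two contributions collapses~\eqref{main context ineq 1} into
\[
\psi(y^k)-\psi(x^*)\;\le\;\frac{3\beta_k D_0\,\bar{r}_k}{2A_k},
\]
exactly as the paragraph preceding the theorem already anticipates.

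Second, I would decouple the factors $\beta_k$ and $\bar{r}_k/A_k$. Because each line search step only inflates $\beta$, the sequence $\{\beta_k\}$ is nondecreasing, so $\beta_{k^*}\le \beta_K$ for every $k^*\le K$, and Proposition~\ref{proposition:line search finite} then supplies $\beta_K=\mathcal{O}(K^{(3-3\nu)/2})$, whose hidden prefactor, traced back through~\eqref{line search stop criterion 1} and the local H\"older descent inequality on $\mathcal{B}_{3D_0}(x^*)$, carries a factor of $\widehat{M}_\nu D_0^{\nu}$. For the remaining piece, I would apply Lemma~\ref{lemma:log term} with $d_k\coloneqq \bar{r}_k^{1/2}$, which is positive and nondecreasing by~\eqref{r_k}. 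Since $A_{k+1}=\bigl(\sum_{i=0}^{k}d_i\bigr)^{2}$, squaring the lemma yields
\[
\min_{1\le k\le K}\frac{\bar{r}_k}{A_k}\;\le\;\frac{(\bar{r}_K/\bar{r})^{1/K}\,\log^2\!\bigl(e\sqrt{\bar{r}_K/\bar{r}}\bigr)}{K^2},
\]
which, using $\bar{r}_K\le 4D_0$ and $\bar{r}_0=\bar{r}$, becomes $\mathcal{O}\!\bigl(K^{-2}(4D_0/\bar{r})^{1/K}\log^2(eD_0/\bar{r})\bigr)$. By the very definition of $k^*$, this minimum is attained at $k=k^*$.

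Substituting both estimates into the simplified bound for $\psi(y^{k^*})-\psi(x^*)$ immediately produces the rate in~\eqref{theorem2}, since $K^{(3-3\nu)/2}\cdot K^{-2}=K^{-(1+3\nu)/2}$. The principal obstacle I anticipate is the bookkeeping in the second step: I must trace how the implicit constants in Proposition~\ref{proposition:line search finite} depend on $\widehat{M}_\nu$ and $D_0$---they emerge from applying Young's inequality to the local H\"older descent condition, with $\eta_k$ playing the role of the trade-off parameter $\delta$ in~\eqref{eq:inexact-lip-smooth}---so that the product $\beta_K\cdot D_0$ yields exactly the $\widehat{M}_\nu D_0^{1+\nu}$ prefactor demanded by~\eqref{theorem2}, with no extraneous power of $D_0$ or of $\widehat{M}_\nu$ surviving.
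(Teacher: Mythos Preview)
Your proposal is correct and follows essentially the same route as the paper: simplify~\eqref{main context ineq 1} to $\psi(y^k)-\psi(x^*)\le \frac{3\beta_k D_0\bar r_k}{2A_k}$ via $D_0^2-D_k^2\le 2D_0 r_k$ and $\bar r_k\le 4D_0$, then control $\bar r_{k^*}/A_{k^*}$ by squaring Lemma~\ref{lemma:log term} with $d_k=\bar r_k^{1/2}$, and finally insert the $\beta_k=\mathcal{O}(\widehat M_\nu D_0^\nu k^{(3-3\nu)/2})$ bound. The only cosmetic difference is that the paper bounds $\beta_{k^*}$ directly via Lemma~\ref{lemma:the bound of beta 3} (using $\bar r_{k^*}^\nu\le(4D_0)^\nu$ and $(k^*)^{(3-3\nu)/2}\le K^{(3-3\nu)/2}$), whereas you first invoke monotonicity $\beta_{k^*}\le\beta_K$ and then cite Proposition~\ref{proposition:line search finite}; both lead to the same constant.
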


\begin{remark}
The term $(\frac{4D_0}{\bar{r}})^{\frac{1}{k}}$ in the inequality~\eqref{theorem2} approaches $1$ as $k$ increases and is bounded by a small constant under the mild condition $k \geq \Omega(\log(D_0/\bar{r}))$. If $\bar{r}$ is sufficiently large,  this condition is much weaker than the polynomial dependency on $D_0$ typically required for achieving a nontrivial rate in gradient descent.
\end{remark}

\begin{remark}
According to~\eqref{theorem2}, to achieve an $\epsilon$-optimality gap, our method attains a near-optimal complexity bound of $\tilde{\mathcal{O}}\Big(D_0^{\frac{2(1+\nu)}{1+3\nu}}\epsilon^{-\frac{2}{1+3\nu}}\Big)$, where the $\tilde{\mathcal{O}}$ notation hides logarithmic factors arising from line search and distance adaptation, such as $\mathcal{O}(\log \tfrac{1}{\epsilon})$ and $\mathcal{O}(\log^2 \tfrac{D_0}{\bar{r}})$.
\end{remark}
\begin{remark}
Theorems~\ref{theorem:ball} and~\ref{theorem:complexity} require the initial guess $\bar{r}$ to lie within a reasonably large neighborhood, specifically $\bar{r} \leq 4D_0$. This condition is a key assumption underlying distance-adaptive methods~\citep{ivgi2023dog}. For theoretical purposes, we provide an automatic initialization strategy for $\bar{r}$ in certain special cases (see Appendix). Empirically, we observe that the performance of the algorithm is largely insensitive to the specific choice of $\bar{r}$.
\end{remark}

\section{Stochastic optimization}\label{sec:4}
In this section, we focus on stochastic optimization of \holder{} functions, 
wherein problem~\eqref{pb:main}, $f(x)$ exhibits the expectation form:
\[
f(x)=\mathbb{E}_\xi[f(x,\xi)],
\]  
where $\xi$ is a random sample following from specific distribution. 
Due to the difficulty in exactly computing the gradient $\nabla f(x)$, it is challenging to perform line search. To bypass this issue, we present a new line-search-free and accelerated distance-adaptive method in Algorithm~\ref{alg:agda lsfm}. At the cost of removing linesearch, we require an additional boundedness assumption. 
\begin{assumption}[Boundedness of domain]\label{boundedness}
The set $\dom g$ is bounded, namely, $D=\sup_{x,y\in\dom g}\norm{x-y} <+\infty$. We denote $\tilde{M}_{\nu}=M_{\nu}(x^*, D)$ for simplicity. 
\end{assumption}

Let us use $\nabla\tilde f(x, \xi)$ to represent a stochastic gradient,  we further assume the stochastic gradient has a bounded variance: $\sigma^2:= \sup_{x\in \Rbb^d}\mathbb{E}_\xi[\|\nabla \tilde f(x, \xi)-\nabla f(x)\|_*^2]<+\infty$. For the sake of notation, we denote $\tilde{\nabla}f(x^k)=\nabla\tilde f(x^k, \xi_k^x)$ and $\tilde{\nabla}f(y^k)=\nabla \tilde f(y^k, \xi_k^y)$ to present the stochastic gradient in the $k$-th iteration, where $\xi_k^x$ and $\xi_k^y$ are two i.i.d. samples.

\begin{algorithm}
\renewcommand{\algorithmicrequire}{\textbf{Input:}}
\renewcommand{\algorithmicensure}{\textbf{Output:}}
\caption{ \underline{L}ine Search-\underline{F}ree AGDA  (LF-AGDA)}\label{alg:agda lsfm}
\begin{algorithmic}[1]
\Require $x^0$, $\bar{r}$;
\State Initialize $A_0=0$, $\beta_0=0$, $\bar{r}_0=\bar{r}$; 
\State Set initial solution: $v^0=\hat{x}^{0}=y^0=x^0$;
\For{$k= 0,1,...,K-1$}
\State Solve $v^k=\arg\min_{x}\sum_{i=0}^{k}a_{i}[f(x^{i})+\langle\tilde\nabla f(x^{i}), x - x^{i}\rangle+g(x)]+\frac{\beta_{k}}{2} \|x^0-x\|^2$;
\State Set $d_k=\|x^0-\hat{x}^{k}\|$;
\State Update $\bar{r}_k$ and $A_{k+1}$ by (\ref{r_k 2}) and (\ref{A_k})
\State Set $a_{k+1}=A_{k+1} - A_k, \tau_k=\frac{a_{k+1}}{A_{k+1}}$;
\State Set $x^{k+1}=\tau_kv^k+(1-\tau_k)y^k$;
\State Compute $\hat{x}^{k+1} = \arg\min_{y} \{a_{k+1}[\langle\tilde{\nabla} f(x^{k+1}), y - x^{k+1}\rangle+g(y)]+\frac{\beta_{k}}{2} \|v^k- y\|^2\}$;
\State Set $y^{k+1}=\tau_k\hat{x}^{k+1}+(1-\tau_k)y^k$;
\State Set $\eta_k = \frac{\beta_{k+1}\bar{r}_k^2-\beta_{k}\bar{r}_{k}^2}{8a_{k+1}}$;
\State Solve \eqref{balance equation} to obtain the solution $\beta_{k+1}$;
\EndFor
\end{algorithmic}
\end{algorithm}

Algorithm \ref{alg:agda lsfm} is equipped with the following rules:
\begin{align}
\bar{r}_k = \max\{\bar{r}_{k-1}&, r_k, d_k\}, \ k > 0,\label{r_k 2}
\end{align}
where $d_k =\|\hat{x}^k-x^0\|.$

In stochastic settings, traditional line search methods cannot be used as they introduce bias. Therefore, it is necessary to develop an approach that does not rely on line search. Rather than performing line search to find the descent direction, \citet{rodomanov2024universal} proposes a nonlinear balance equation. The core idea is to bound the error term $ f(y^{k+1}) - f(x^{k+1}) - \langle \nabla f(x^{k+1}), y^{k+1} - x^{k+1} \rangle - \frac{H_{k}}{2}\|y^{k+1} - x^{k+1}\|^2 $ by constructing a balance equation incorporating $ D $. We demonstrate that the term used to bound the error is effectively equivalent to line search, allowing us to use $\bar{r}_k$ to approximate $ D $, which implies that $ D $ is not essential. Subsequently, we will explain how to formulate the balance equation.

As we mentioned in Section~\ref{sec:3}, line search strategy aims to find $\beta_{k+1}$ such that $l_k(\beta_{k+1})=0$. The difficulty is that $y^{k+1}$ depends on $\beta_{k+1}$ and thus solving $l_k(\beta_{k+1})=0$ cannot be achieved by a closed-form solution. The motivation for applying the balance equation is to decouple the updating rule of $y^{k+1}$ from the $\beta_{k+1}$. Once $y^{k+1}$ has been updated, $l_k(\beta_{k+1})=0$ will reduce to the following balance equation 
\begin{equation}\label{balance equation}
\frac{\beta_{k+1}-\beta_k}{2A_{k+1}}\bar{r}_k^2=[\langle\tilde{\nabla}f(y^{k+1})- \tilde{\nabla} f(x^{k+1}), y^{k+1} - x^{k+1}\rangle
-\frac{\beta_{k+1}}{64\tau_{k}^2A_{k+1}}\|y^{k+1}-x^{k+1}\|^2]_+,
\end{equation}
where $[\cdot]_+=\max(0,\cdot).$ We use $\langle\tilde{\nabla}f(y^{k+1}), y^{k+1} - x^{k+1}\rangle$ to replace the $-f(y^{k+1})+ f(x^{k+1})$ since we cannot obtain the function value.

Since we decouple $y^{k+1}$ from $\beta_{k+1}$, equation~\eqref{balance equation} has a simple form that is easy to solve. Moreover, it has a unique closed-form solution given by 
\begin{equation}
\beta_{k+1}=\beta_k+\frac{[64\tau_k^2A_{k+1}\langle\tilde{\nabla} f(y^{k+1})-\tilde{\nabla} f(x^{k+1}), y^{k+1} - x^{k+1}\rangle-\beta_k\|y^{k+1}-x^{k+1}\|^2]_+}{32\tau_k^2\bar{r}_k^2+\|y^{k+1}-x^{k+1}\|^2}.
\end{equation}
We leave the details about conducting the closed-form solution in the appendix.

We next conduct the convergence analysis of Algorithm~\ref{alg:agda lsfm}. In order to use the unbiasedness of the inexact oracle, we adopt the balance equation to update the $\beta_{k+1}$.  Moreover, we use $\bar{r}_k$ is a natural underestimation of $D$ and Lemma~\ref{lemma:log term} ensures that the cost of underestimation can be reduced to $\mathcal{O}(\log{\frac{D}{\bar{r}}})$. We leave the proof in the appendix.

\begin{theorem}\label{theorem:stochastic}
Suppose Assumption~\ref{boundedness}
 holds. Algorithm~\ref{alg:agda lsfm} exhibits a convergence rate that
\begin{equation}
\mathbb{E}[\psi(y^{k^*})-\psi(x^*)]\in \mathcal{O}\left(\frac{\tilde{M_{\nu}}D^{1+\nu}}{K^{\frac{1+3\nu}{2}}}+\frac{\sigma D}{\sqrt K}\right).
\end{equation}
where $k^*=\arg\min_{1\le k\le K}\{\frac{\bar{r}_{k}}{A_{k}}\}$.
\end{theorem}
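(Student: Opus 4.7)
The plan is to mirror the deterministic analysis of Theorem~\ref{theorem:complexity}, replacing the line-search descent inequality~\eqref{line search stop criterion 1} by the balance equation~\eqref{balance equation} and exploiting $\bar{r}_k\le D$ (a consequence of Assumption~\ref{boundedness} and the fact that $x^0,v^k,\hat{x}^k\in\dom g$) in place of the distance-adaptive boundedness argument used in Theorem~\ref{theorem:ball}. Because the deterministic proof of Theorem~\ref{theorem:ball} already requires boundedness of the iterates to control the local H\"older constant, Assumption~\ref{boundedness} is what frees us from needing line search in the stochastic setting.

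First, I would establish a one-step estimate analogous to Lemma~\ref{lemma:convergence error}. The two dual-averaging style updates---one producing $v^{k+1}$ with proximal center $x^0$ and weight $\beta_k$, the other producing $\hat{x}^{k+1}$ with proximal center $v^k$---yield a telescoping potential of the form
$$A_{k+1}\psi(y^{k+1}) + \tfrac{\beta_{k+1}}{2}\|v^{k+1}-x^0\|^2 \le A_k\psi(y^k) + \tfrac{\beta_k}{2}\|v^k-x^0\|^2 + a_{k+1}\psi(x^*) + \Ecal_k,$$
where $\Ecal_k$ collects the H\"older error $f(y^{k+1})-f(x^{k+1})-\langle\nabla f(x^{k+1}),y^{k+1}-x^{k+1}\rangle$ together with stochastic noise inner products $\langle\tilde{\nabla}f(z)-\nabla f(z),\cdot\rangle$ for $z\in\{x^{k+1},y^{k+1}\}$. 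The balance equation~\eqref{balance equation} is engineered precisely so that the H\"older error is absorbed by the increment $\tfrac{\beta_{k+1}-\beta_k}{2A_{k+1}}\bar{r}_k^2$, which after telescoping and using $\bar{r}_k\le D$ contributes $\beta_{K}D^2/(2A_K)$. The unbiasedness of the two independent samples $\xi_k^x,\xi_k^y$ kills the first-order noise terms in expectation; the quadratic residuals are controlled by $\sigma^2$ and produce the $\sigma D/\sqrt{K}$ contribution.

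Second, I would bound $\Expe[\beta_k]$. From the closed-form expression for $\beta_{k+1}-\beta_k$ displayed just above the theorem, together with the H\"older property applied at radius $D$ (giving constant $\tilde{M}_\nu$), the increment is at most $O\!\left(\tilde{M}_\nu^{2/(1+\nu)} a_{k+1}^{(1+3\nu)/(1+\nu)}/A_{k+1}\right)$ plus a noise term of order $\sigma^2 a_{k+1}/(\beta_k D^2)$. Summing and solving the recursion yields $\Expe[\beta_k] = O\!\left(k^{(3-3\nu)/2}\tilde{M}_\nu^{2/(1+\nu)} + \sigma\sqrt{k}/D\right)$, mirroring Proposition~\ref{proposition:line search finite}. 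Finally, applying Lemma~\ref{lemma:log term} with $d_k=\sqrt{\bar{r}_k}$ guarantees an index $k^*$ for which $\bar{r}_{k^*}/A_{k^*} = \tilde{O}(1/K^2)$; plugging this and the $\Expe[\beta_k]$ bound into the one-step estimate evaluated at $k=k^*$ produces the two advertised error terms $\tilde{M}_\nu D^{1+\nu}/K^{(1+3\nu)/2}$ and $\sigma D/\sqrt{K}$, with logarithmic factors absorbed into $\mathcal{O}$.

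The main obstacle will be step two: because $\beta_{k+1}$ is defined by the balance equation using the stochastic gradients $\tilde{\nabla}f(x^{k+1}),\tilde{\nabla}f(y^{k+1})$, it is a random variable correlated with exactly the noise it is meant to control, so naive unconditional expectations overcount. The remedy is to condition on the filtration $\mathcal{F}_k$ generated by $\{\xi_i^x,\xi_i^y\}_{i<k}\cup\{\xi_k^x\}$, use the $[\,\cdot\,]_+$ in~\eqref{balance equation} to split the analysis into the zero-increment and strictly-positive-increment cases, and apply a Jensen-type inequality to decouple $\beta_{k+1}$ from the bias-free noise tail. Carrying out this decoupling cleanly while preserving the accelerated exponent $(1+3\nu)/2$ is the crux of the argument.
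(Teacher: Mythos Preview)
Your high-level structure matches the paper's, but two details go wrong. First, you misidentify the source of the stochastic term. In the paper's one-step analysis, every noise inner product pairs $\tilde{\nabla}f(x^{k+1})-\nabla f(x^{k+1})$ (resp.\ $\tilde{\nabla}f(y^{k+1})-\nabla f(y^{k+1})$) with a vector that is measurable before $\xi_{k+1}^x$ (resp.\ $\xi_{k+1}^y$) is drawn---specifically $y^k-x^{k+1}$ and later $v^k-x^*$ for the $x$-noise, and $y^{k+1}-x^{k+1}$ for the $y$-noise (recall $y^{k+1}$ is fixed before $\tilde\nabla f(y^{k+1})$ is sampled). These terms vanish \emph{exactly} in expectation; there are no ``quadratic residuals'' in the convergence inequality. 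The entire $\sigma D/\sqrt{K}$ contribution enters through $\mathbb{E}[\beta_K]$.

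Second, your $\beta_k$ scaling is off. From the balance equation together with $A_{k+1}/\bar{r}_k\le(k+1)^2$ and $\tau_k^2A_{k+1}\le 4\bar{r}_k$, the noise part of the increment is $\beta_{k+1}-\beta_k\le O\bigl((k+1)^2(\Delta_{k+1}^x+\Delta_{k+1}^y)^2/\beta_{k+1}\bigr)$, not $\sigma^2 a_{k+1}/(\beta_k D^2)$; there is no $D$ in the denominator and the prefactor is $(k{+}1)^2$. This yields the pathwise bound $\beta_k^2\le O(\tilde M_\nu^2 D^{2\nu}k^{3-3\nu})+O(\sum_{i\le k}i^2\Delta_i^2)$, hence by Jensen $\mathbb{E}[\beta_k]=O(\tilde M_\nu D^\nu k^{(3-3\nu)/2}+\sigma k^{3/2})$---not $O(\sigma\sqrt{k}/D)$, which would give the impossibly fast rate $\sigma/K^{3/2}$. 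Plugging the correct bound into $\mathbb{E}[\beta_K]\cdot D/K^2$ produces exactly the two advertised terms. Finally, the ``main obstacle'' you flag dissolves: because the bound on $\beta_k^2$ is established pathwise (Lemma~\ref{lemma:the bound of beta 2}, via the recursion Lemma~\ref{lemma:auxiliary result 2}) and expectation is taken only at the end, no conditioning or $[\,\cdot\,]_+$ case splitting is required.
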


\section{Experiments}
We evaluate the performance of our proposed method on a diverse set of convex optimization problems. The goal is to assess its efficiency and robustness across different application scenarios. Additional implementation details and extended results (more different problems and large scale experiments) are provided in the appendix.

\subsection{Deterministic setting}

\begin{figure}[h]
  \centering
  \begin{minipage}[t]{0.32\textwidth} 
    \centering
    \includegraphics[width=\textwidth]{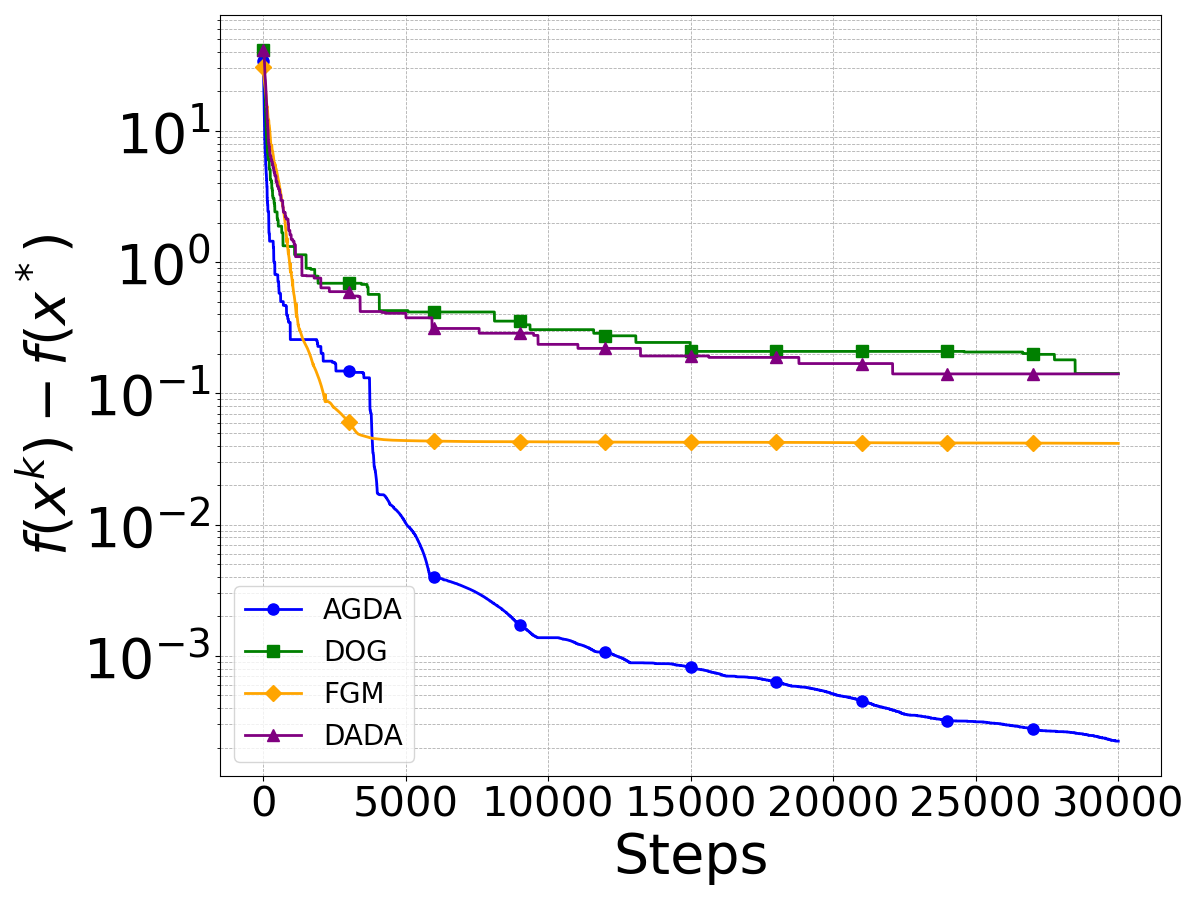}
  \end{minipage}
  \hfill 
  \begin{minipage}[t]{0.32\textwidth}
    \centering
    \includegraphics[width=\textwidth]{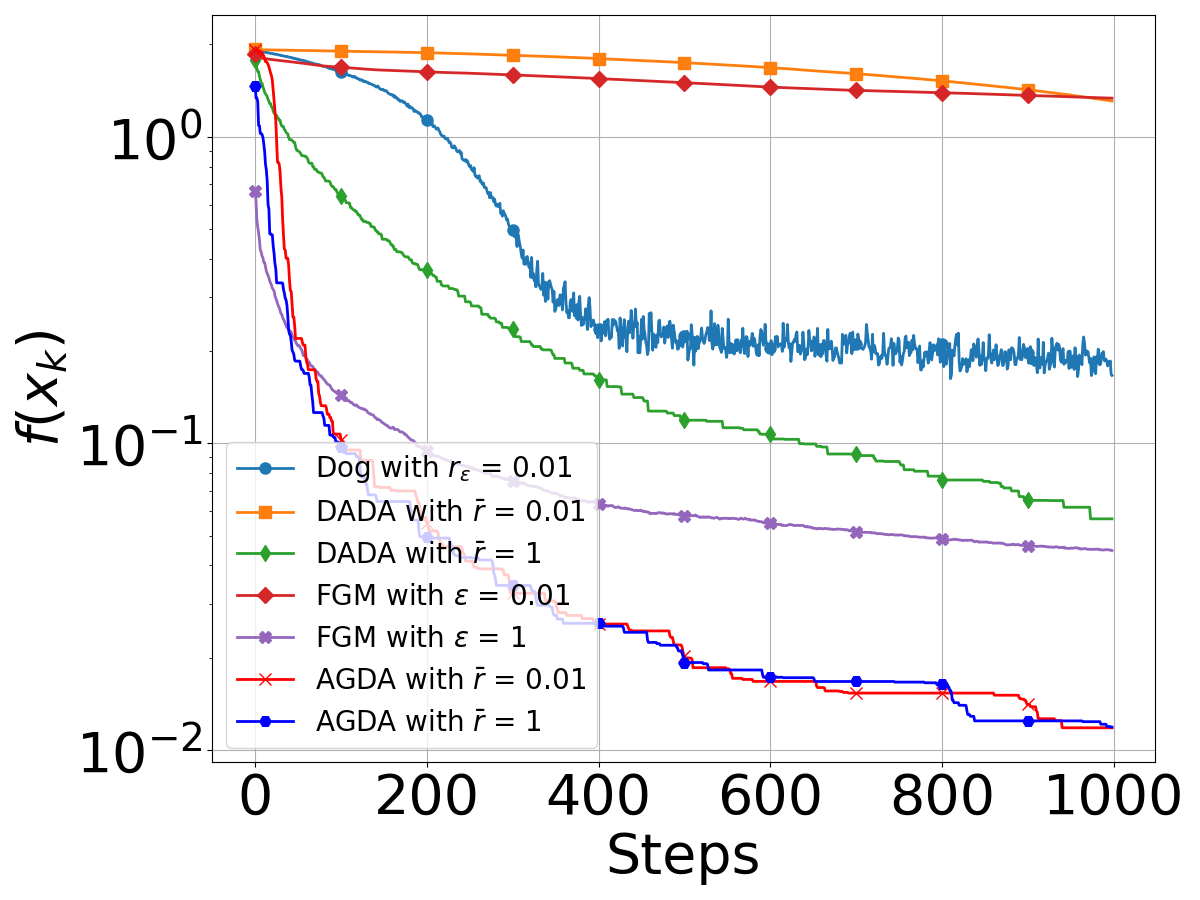}
  \end{minipage}
  \hfill 
  \begin{minipage}[t]{0.32\textwidth}
    \centering
    \includegraphics[width=\textwidth]{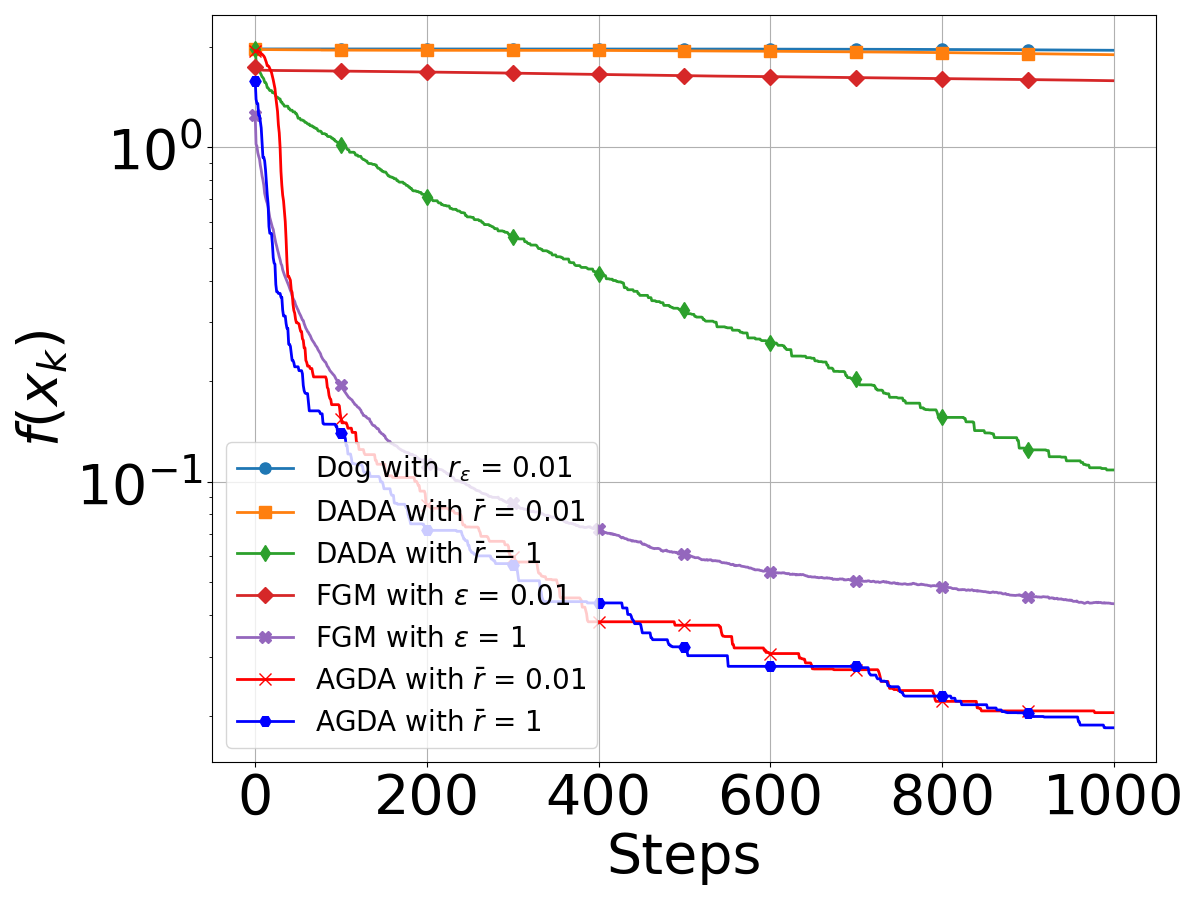}
  \end{minipage}
  \caption{Performance of the compared algorithms. Left: softmax problem. Middle: Matrix game problem of size $(n,m)=(896,128)$. Right: Matrix game of size $(n,m)=(448,64)$.  \label{fig:deterministic}} 
\end{figure}

\paragraph{Softmax}
The first problem is optimizing the softmax function:
\begin{equation}\label{softmax}
    \min\limits_{x\in \mathbb{R}^d}\ \mu \log\left[\sum\limits_{i = 1}^n \exp\left(\frac{\langle a_{i}, x\rangle - b_{i}}{\mu}\right)\right],
\end{equation}
where $a_{i}\in \mathbb{R}^d$ , $b_{i}\in \mathbb{R}$ and $\mu$ is a given positive factor. This function can be viewed as a smooth approximation of the maximization function.

To facilitate a clear comparison, we design a simple baseline problem for evaluation. Specifically, we first generate i.i.d. vectors $\{\hat{a}_i\}$ and $b_i$ whose components are uniformly distributed in the interval $[-1, 1]$. Using these vectors, we define an initial softmax objective $\hat{f}(x)$ as in~\eqref{softmax}. We then shift the data by redefining $a_i = \hat{a}_i - \nabla\hat{f}(0_d)$, where $0_d$ is the $d$-dimensional zero vector, and set $f(x)$ according to~\eqref{softmax}, using the new $a_i$ and the original $b_i$. With this construction, $x=0_d$ becomes the global minimizer of $f(x)$.

We employ various methods for comparison, specifically considering DOG~\cite{ivgi2023dog}, DADA~\cite{moshtaghifar2025dada}, and the universal fast gradient method (FGM)~\cite{nesterov2015universal} as benchmarks. For DOG, we set $r_{\epsilon} = 0.01$. Both DADA and AGDA are configured with $\bar{r}=0.01$, while for FGM, we set $\epsilon=0.01$. We set $n=1000$, $d=2000$ and $\mu=0.005$ as the parameters of the problem. The results of our method are illustrated in the left part of Figure~\ref{fig:deterministic}.
 As expected from complexity analysis, FGM, being an accelerated method, outperforms the non-accelerated baselines DOG and DADA. Notably, our proposed algorithm achieves the fastest convergence among all tested methods, which empirically confirms the advantage of our adaptive stepsize selection. 

\paragraph{Matrix game}
The second problem we experimented with is the matrix game problem~\citep{nesterov2015universal}. We denote $\Delta_{d}$ as the standard simplex with dimension $d>0$. Specifically, consider a payoff matrix $A\in \mathbb{R}^{n \times m}$, where two agents engage in a game by adopting mixed strategies $x\in\Delta_{n}$ and $y\in\Delta_{m}$ respectively to play a game without knowledge of each other's strategy. The gain of the first agent is given by $\langle x,Ay\rangle$, which corresponds to the loss of the second agent. The Nash equilibrium of this game can be found by solving the saddle-point (min-max) problem:
\begin{equation}
    \min_{x \in \Delta_n} \;\max_{y \in \Delta_m} \; \langle x, Ay \rangle.
\end{equation}
This problem can be posed as a minimization problem: $\min_{x \in \Delta_n, y \in \Delta_m} \{ \psi_{pd}(x,y) = \psi_p(x) - \psi_d(y) \}=0$, where $\psi_p(x) {=} \max_{1 \leq j \leq m} \langle x, A e_j \rangle$ and $\psi_d(y) {=} \min_{1 \leq i \leq n} \langle e_i, A y \rangle$.

We generate the payoff matrix $A$ such that each entry is independently and uniformly distributed within the interval $[-1,1]$. This problem is nonsmooth with H\"older smoothness parameter $\nu = 0$, making it a suitable test case for evaluating the robustness of optimization algorithms under minimal smoothness assumptions.
We evaluate all methods on two problem sizes: $(n,m)=(896,128)$ and $(n,m)=(448,64)$. The performance of our method, along with the baselines, is shown in the right panels of Figure~\ref{fig:deterministic}. The results demonstrate that our algorithm remains highly effective even in challenging nonsmooth settings, outperforming the alternatives in both cases.

\subsection{Stochastic setting}

\begin{figure}[h]
  \centering
  \begin{minipage}[t]{0.32\textwidth} 
    \centering
    \includegraphics[width=\textwidth]{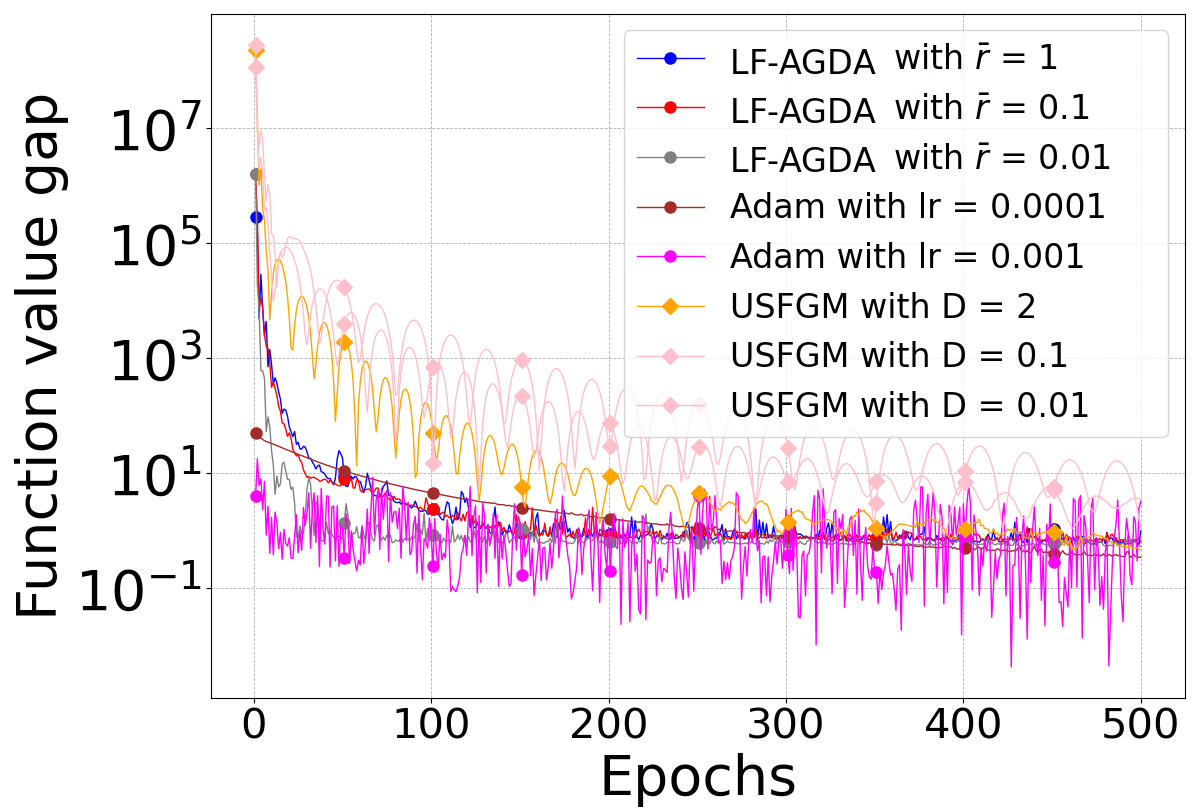}
  \end{minipage}
  \hfill 
  \begin{minipage}[t]{0.32\textwidth}
    \centering
    \includegraphics[width=\textwidth]{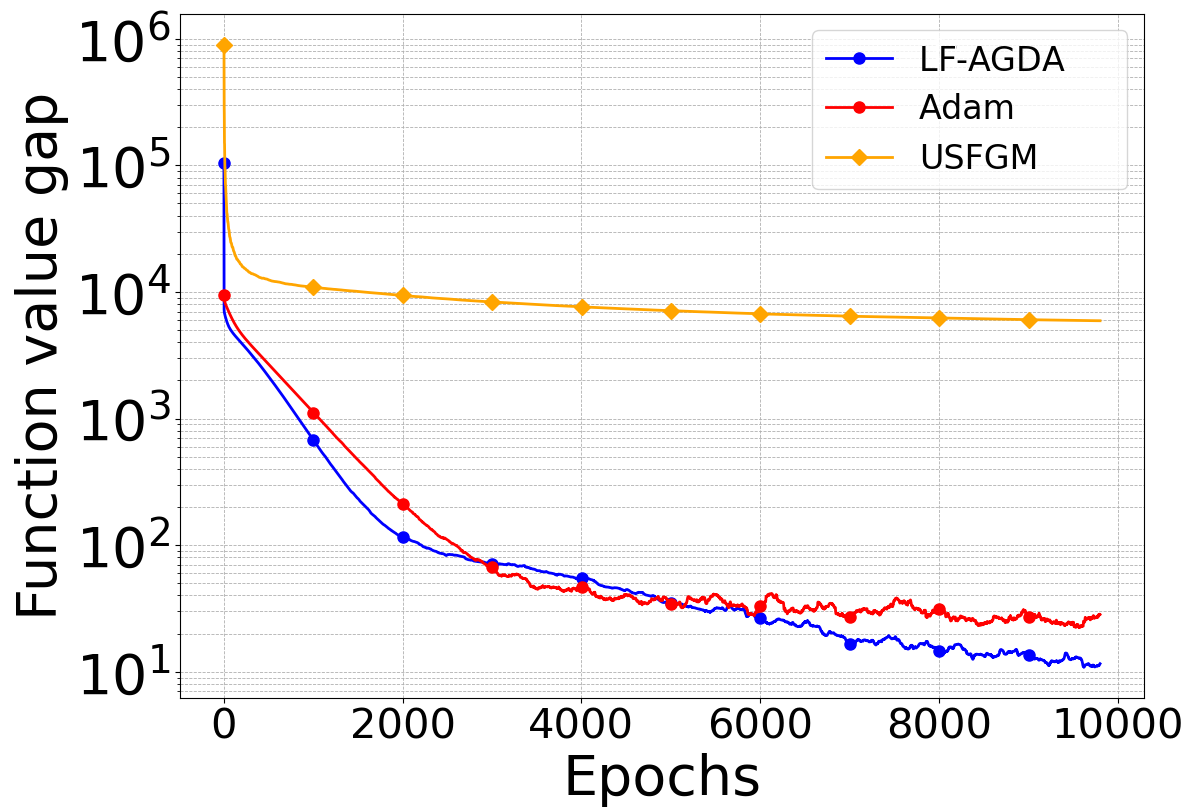}
  \end{minipage}
  \hfill 
  \begin{minipage}[t]{0.32\textwidth}
    \centering
    \includegraphics[width=\textwidth]{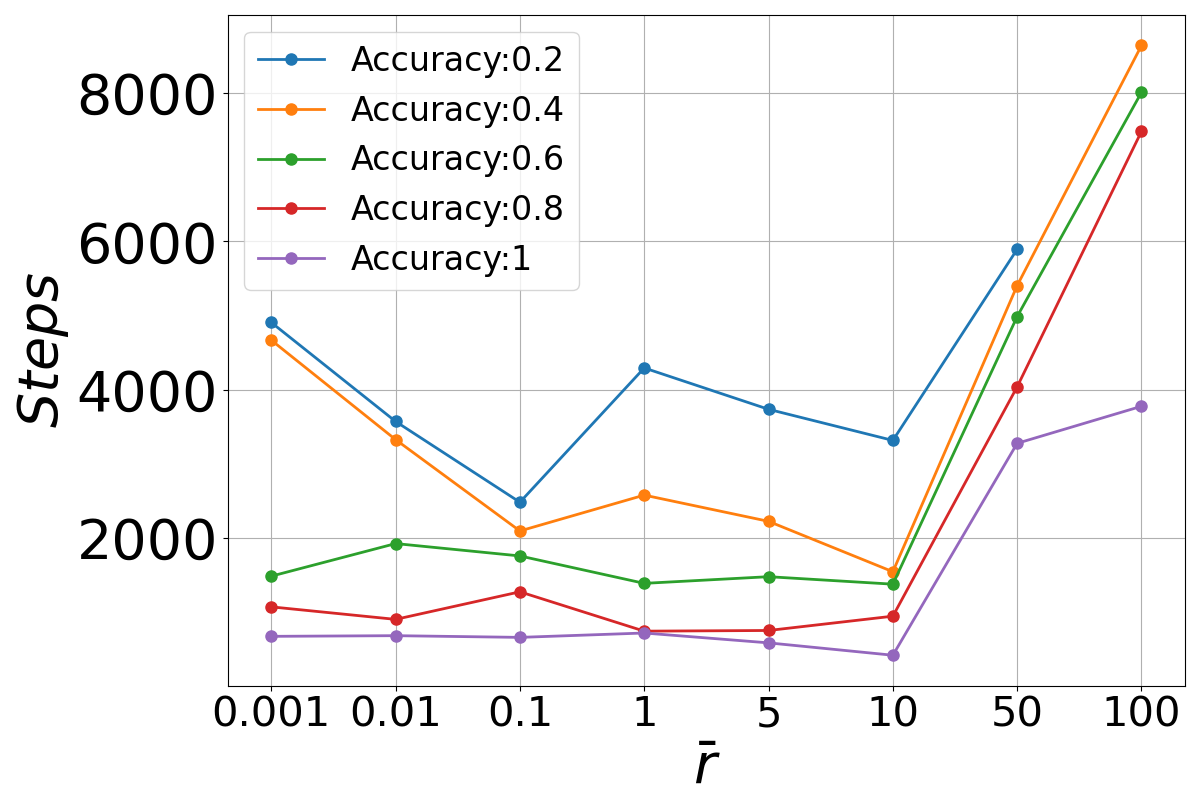}
  \end{minipage}
  \caption{Performance of the compared algorithms. Left: robustness test on \texttt{diabetes} dataset. Right:  Long-run test on \texttt{Boston housing} dataset. Right: robustness test on the softmax problem \label{figure 2}} 
\end{figure}
\paragraph{Least-squares}
For the stochastic setting, we first consider the following problem:
\begin{equation}
    \min\limits_{x\in \mathbb{R}^d} f(x) = \frac{1}{2}\Vert Ax - b \Vert_2 ^2 \quad \text{s.t. } \Vert x \Vert_2  \leq r.
\end{equation}

We set the constraint radius $r = 10$ and conduct experiments using real-world datasets from LIBSVM\footnote{\url{https://www.csie.ntu.edu.tw/cjlin/libsvm/}}. For the first test, we use the diabetes dataset to examine robustness. For both USFGM and our LF-AGDA algorithm, we vary the initialization hyperparameter $D$ for USFGM and $\bar{r}$ for LF-AGDA ---to evaluate sensitivity to stepsize-related inputs.  As shown in the left panel of Figure~\ref{figure 2}, USFGM~\cite{rodomanov2024universal} and Adam exhibit unstable performance when hyperparameters are poorly tuned, while our algorithm maintains strong and consistent convergence across a wide range of settings.

To further validate algorithm efficiency in a practical regime, we repeat the experiment on the Boston housing dataset, tuning all methods with their best-performing hyperparameters. The middle panel of Figure~\ref{figure 2} shows that our method achieves competitive long-term performance while preserving its robustness advantage. These results illustrate that our approach is not only stable but also effective in real-world stochastic optimization tasks.

\subsection{Robustness}
We conduct additional experiments to assess the robustness of our method with respect to the choice of the parameter $\bar{r}$, which reflects an estimate of the initial distance to the optimal solution, $D_0$. Our goal is to show that the performance of our algorithm remains stable across a wide range of $\bar{r}$ values, thereby reducing the sensitivity to inaccurate user-specified estimates.

To this end, we revisit the softmax minimization problem and vary $\bar{r}$ logarithmically from $10^{-4}$ to $10^{4}$. For each setting, we fix the target function value tolerance at $\epsilon \in \{0.2, 0.4, 0.6, 0.8, 1\}$ and record the number of iterations required to reach the specified accuracy.
The results, shown in the third panel of Figure~\ref{figure 2}, reveal that our method is highly robust: the number of iterations remains nearly constant across several orders of magnitude of $\bar{r}$. This suggests that our approach can tolerate significant misspecification of $D_0$ without compromising convergence efficiency. In practice, users may either provide a rough estimate of the initial distance or simply default to a moderate value such as $\bar{r} = 10^{-3}$, which performs consistently well across our tests.

\subsection{Non-convex neural network}

To evaluate performance in non-convex optimization, we trained a ResNet18 model on the CIFAR-10 dataset. We compared the proposed AGDA algorithm against two established optimizers: AdamW and DoG. For AdamW, we set the learning rate to $10^{-3}$ . The DoG algorithm was configured with $r_{\epsilon}=10^{-3}$, which is consistent with the primary hyperparameter used in the AGDA implementation. The comparative results are presented below.

\begin{figure}[h]
  \centering
  \begin{minipage}[t]{0.32\textwidth} 
    \centering
    \includegraphics[width=\textwidth]{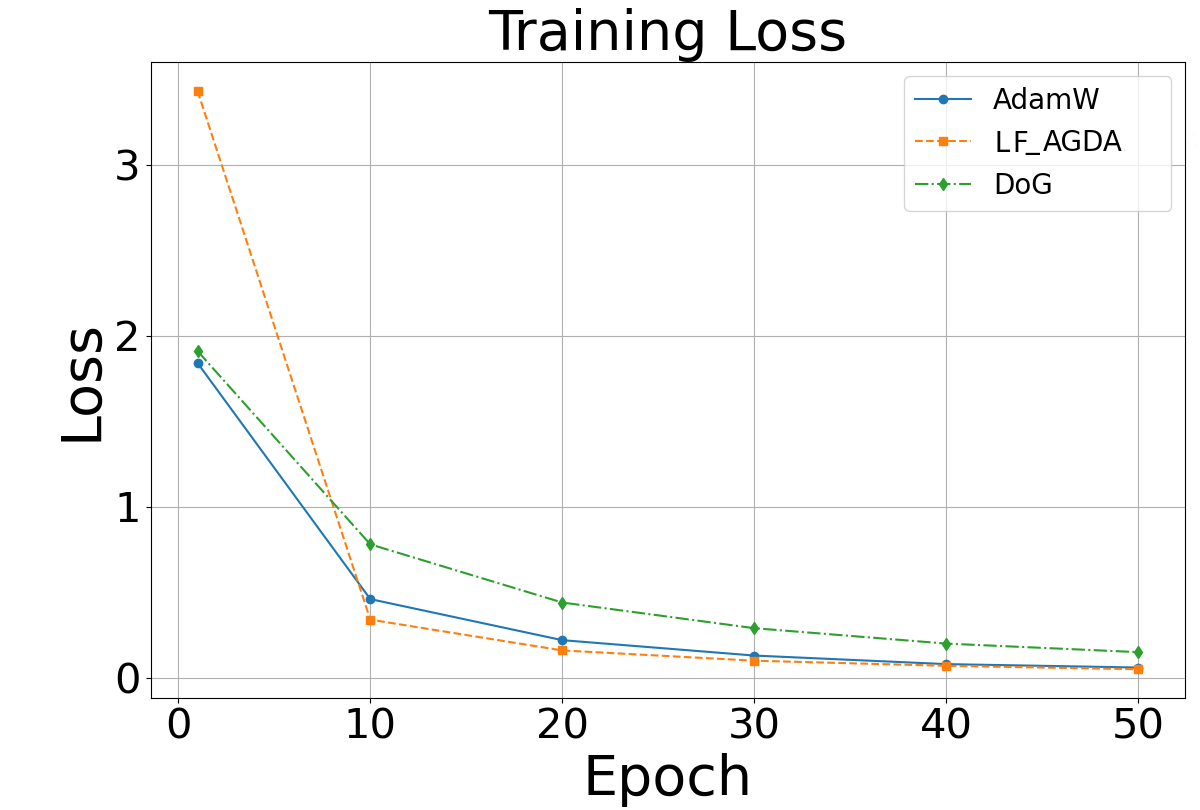}
  \end{minipage}
  \hfill 
  \begin{minipage}[t]{0.32\textwidth}
    \centering
    \includegraphics[width=\textwidth]{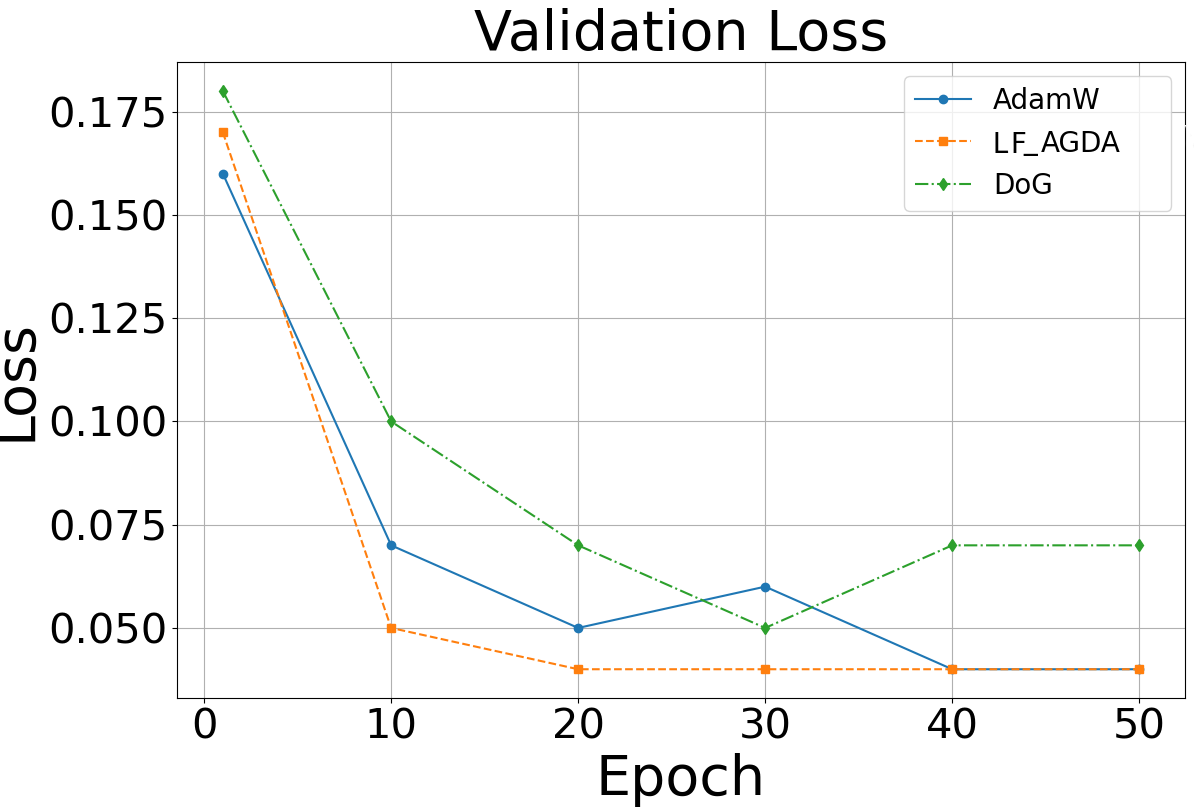}
  \end{minipage}
  \hfill 
  \begin{minipage}[t]{0.32\textwidth}
    \centering
    \includegraphics[width=\textwidth]{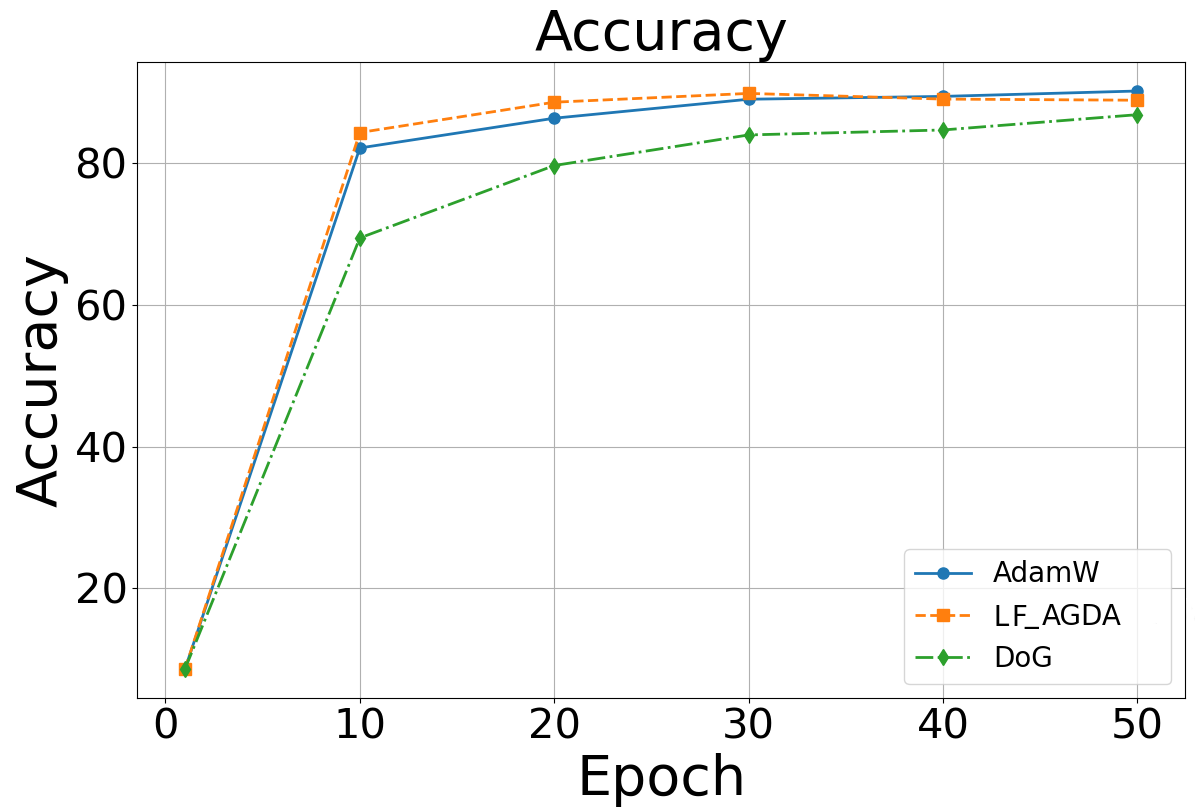}
  \end{minipage}
  \caption{Performance of the compared algorithms in network training. Left: Training Loss. Right:  Validation Loss. Right: Accuracy. } 
\end{figure}

The results clearly show that LF-AGDA maintains performance on par with the AdamW baseline and achieves superior results compared to DoG. Crucially, these findings establish the competitiveness of our method against SOTA parameter-free approaches, suggesting its practical robustness extends beyond the theoretical assumptions (e.g., boundedness and convexity) that underpin its derivation, even within deep learning's highly non-convex landscape.

\section{Conclusion}
This paper introduces a novel parameter-free first-order method for solving composite convex optimization problems without requiring prior knowledge of the initial distance to the optimum ($D_0$) or the H\"older smoothness parameters. Our method achieves a near-optimal complexity bound for locally H\"older smooth functions in an anytime fashion, making it broadly applicable and practical.
In the stochastic setting, we further develop a line-search-free accelerated method that eliminates the need for estimating the problem-dependent diameter D during stepsize selection. This enhances both theoretical generality and practical usability.
Preliminary experiments demonstrate that our algorithms are competitive and often outperform existing universal methods for H\"older smooth optimization, particularly in terms of robustness and adaptivity.
An important direction for future research is to improve the dependence on the diameter $D_0$ in the convergence complexity, and to further relax the boundedness assumptions typically required in the stochastic setting.

\section{Acknowledgements}
This research was supported in part by the National Natural Science Foundation of China [Grants 12571325, 72394364/72394360] and the Natural Science Foundation of Shanghai [Grant 24ZR1421300].

\newpage
\bibliographystyle{plainnat}
\bibliography{ref}

\appendix
\newpage

\part{Appendix} %
\insertseparator
\startcontents[appendix] 
\printcontents[appendix]{}{1}{}
\insertseparator

\paragraph{Structure of the Appendix} In Section~\ref{sec:limit}, we discuss the limitations of our algorithms. Section~\ref{sec:lemma} presents the proofs of the lemmas for completeness. In Sections~\ref{sec:main-result} and~\ref{sec:stochastic result}, we provide detailed proofs of the main results discussed in Sections~\ref{sec:3} and~\ref{sec:4}. In Section~\ref{sec:auto init}, we introduce two methods for automatically setting the hyperparameters. Finally, Section~\ref{sec:experiment details} offers additional experiments to demonstrate the advantage of the proposed algorithms.

\newpage
\section{Limitations}\label{sec:limit}
Algorithm~\ref{alg:agda} significantly reduces the multiplicative overhead of choosing a sufficiently small parameter $\bar{r}$ from a polynomial to a logarithmic factor, and lowers the average number of gradient evaluations to one per iteration—compared to four per iteration in the Universal Fast Gradient Method (FGM)~\citep{nesterov2015universal}. However, this improvement comes at the cost of increased computational burden during the line search procedure. Specifically, to accurately adapt to the local H\"older smoothness, our method requires a more precise selection of the parameter $\beta_k$, leading to a total of $\mathcal{O}(k \log k)$ line search operations after $k$ iterations, whereas, in contrast, FGM only requires $\mathcal{O}(k)$.

\section{Auxiliary lemmas}\label{sec:lemma}
\subsection{Proof of Lemma~\ref{lemma:log term}}

This result was first established in \citet[Lemma 3]{ivgi2023dog} and \citet[Lemma 30]{liu2023stochastic}. We give a proof for completeness.

\begin{proof}
Let $R_k=\frac{r_k}{\sum_{i=0}^{k-1}r_i}$ and $R_0^{-1}=0$, then for any $k\geq0$
\begin{equation}\nonumber
r_{k+1}R_{k+1}^{-1}=r_{k}R_{k}^{-1}+r_{k}\\
\frac{r_{k}}{r_{k+1}}=R_{k+1}^{-1}-\frac{r_k}{r_{k+1}}R_{k}^{-1}.
\end{equation}

Then
\begin{equation}\nonumber
\begin{aligned}
\sum_{i=0}^{k-1}\frac{r_i}{r_{i+1}}&=\sum_{i=0}^{k-1}R_{i+1}^{-1}-\frac{r_i}{r_{i+1}}R_{i}^{-1}=\sum_{i=0}^{k-1}R_{i+1}^{-1}-R_{i}^{-1}+(1-\frac{r_i}{r_{i+1}})R_{i}^{-1}\\
&=R_{k}^{-1}+\sum_{i=0}^{k-1}(1-\frac{r_i}{r_{i+1}})R_{i}^{-1}\leq R_{k^*}^{-1}(1+k-\sum_{i=0}^{k-1}\frac{r_i}{r_{i+1}}),
\end{aligned}
\end{equation}
where $k^*=\arg\min_{0\le i \le k} R_i$. It then follows that
\begin{equation}\nonumber
\begin{aligned}
R_{k^*}\leq&\frac{1+k-\sum_{i=0}^{k-1}\frac{r_i}{r_{i+1}}}{\sum_{i=0}^{k-1}\frac{r_i}{r_{i+1}}}\leq\frac{1+k-k(\prod_{i=0}^{k-1}\frac{r_i}{r_{i+1}})^{\frac{1}{k}}}{k(\prod_{i=0}^{k-1}\frac{r_i}{r_{i+1}})^{\frac{1}{k}}}\\
=&\frac{1+k-k(\frac{r_0}{r_{k}})^{\frac{1}{k}}}{k(\frac{r_0}{r_{k}})^{\frac{1}{k}}}\leq\frac{1-k\log(\frac{r_0}{r_{k}})^{\frac{1}{k}}}{k(\frac{r_0}{r_{k}})^{\frac{1}{k}}}\\
=&\frac{1-\log(\frac{r_0}{r_{k}})}{k(\frac{r_0}{r_{k}})^{\frac{1}{k}}} =(\frac{r_k}{r_{0}})^{\frac{1}{k}}\frac{\log(e\frac{r_k}{r_{0}})}{k}.
\end{aligned}
\end{equation}
\end{proof}

\subsection{Proof of auxiliary Lemmas}

The following three-point Lemma is a well-known result. See also~\citet[Lemma 3.2]{chen1993convergence} and \citet[Lemma 6]{lan2011primal}. We give a proof for the sake of completeness.

\begin{lemma}\label{lemma:3 point}
For any proper lsc convex function $\phi:\Rbb^d\to \mathbb{R} \cup \{+\infty\}$, any $z \in \dom \phi$ and $\beta>0$. Let $z_+=\arg\min_{x\in \Rbb^d}\{\phi(x)+\frac{\beta}{2} \|z- x\|^2\}$. Then, we have
\begin{equation}
\phi(x)+\frac{\beta}{2} \|z- x\|^2\geq\phi(z_+)+\frac{\beta}{2} \|z - z_+\|^2+\frac{\beta}{2} \|z_+- x\|^2, \forall x\in \Rbb^d.
\end{equation}
\end{lemma}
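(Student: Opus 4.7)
The plan is to invoke the first-order optimality condition for $z_+$ and then combine it with the standard polarization identity for squared Euclidean norms. Since $\phi$ is proper lsc convex and the penalty $\frac{\beta}{2}\|z-\cdot\|^2$ is strongly convex and smooth, the minimizer $z_+$ is well-defined, and by Fermat's rule there exists a subgradient $g \in \partial\phi(z_+)$ satisfying $g + \beta(z_+ - z) = 0$, i.e., $\beta(z - z_+) \in \partial\phi(z_+)$.

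From this I would apply the convex subgradient inequality to $\phi$ at the point $z_+$: for every $x \in \Rbb^d$,
\begin{equation*}
\phi(x) \;\geq\; \phi(z_+) + \binner{\beta(z - z_+)}{x - z_+} \;=\; \phi(z_+) + \beta \inner{z - z_+}{x - z_+}.
\end{equation*}
It then remains to handle the quadratic terms. Here I would use the three-point identity
\begin{equation*}
\|z - x\|^2 \;=\; \|z - z_+\|^2 + \|z_+ - x\|^2 + 2\inner{z - z_+}{z_+ - x},
\end{equation*}
which comes from expanding $\|(z-z_+) + (z_+ - x)\|^2$. Rearranging yields
\begin{equation*}
\frac{\beta}{2}\|z - x\|^2 \;=\; \frac{\beta}{2}\|z - z_+\|^2 + \frac{\beta}{2}\|z_+ - x\|^2 - \beta \inner{z - z_+}{x - z_+}.
\end{equation*}
Adding this to the subgradient inequality, the cross terms $\pm\beta\inner{z - z_+}{x - z_+}$ cancel exactly, producing the desired inequality
\begin{equation*}
\phi(x) + \frac{\beta}{2}\|z - x\|^2 \;\geq\; \phi(z_+) + \frac{\beta}{2}\|z - z_+\|^2 + \frac{\beta}{2}\|z_+ - x\|^2.
\end{equation*}

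This argument is essentially bookkeeping, so there is no serious obstacle; the only point that requires a bit of care is getting the sign in the polarization identity right so that the cross terms from the subgradient inequality and the expansion of $\|z-x\|^2$ line up to cancel. One minor subtlety is ensuring that the subdifferential $\partial\phi(z_+)$ is nonempty at the minimizer: this follows because $z_+ \in \dom\phi$ is the unconstrained minimizer of the sum of $\phi$ and a differentiable function, so the optimality condition immediately furnishes the required subgradient without any constraint qualification.
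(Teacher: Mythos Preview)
Your proof is correct and follows essentially the same approach as the paper: both invoke the first-order optimality condition at $z_+$ together with the polarization (three-point) identity for squared norms, then use convexity of $\phi$ to finish. If anything, your treatment is slightly more careful in writing $\beta(z-z_+)\in\partial\phi(z_+)$ rather than using gradient notation for a function only assumed lsc convex.
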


\begin{proof}
\begin{equation}\nonumber
\begin{aligned}
& \frac{1}{2}\|z_+- x\|^2+ \frac{1}{2}\|z- z_+\|^2- \frac{1}{2}\|z- x\|^2\\
& = \frac{1}{2}\|x\|^2-\langle z_+,x\rangle+\frac{1}{2}\|z_+\|^2 +\frac{1}{2}\|z\|^2-\langle z,z_+\rangle+\frac{1}{2}\|z_+\|^2 -\frac{1}{2}\|z\|^2+\langle z,x\rangle-\frac{1}{2}\|x\|^2\\
& = \langle z- z_+, x-z_+\rangle.
\end{aligned}
\end{equation}

In view of the first-order optimal condition at $z_+$, we have
\begin{equation}\nonumber
\langle\nabla \phi(z_+)+\beta(z_+- z), x-z_+\rangle\geq0.
\end{equation}
Combining the two inequalities above, we have
\begin{equation}\nonumber
\begin{aligned}
\frac{\beta}{2} \|z_+- x\|^2+\frac{\beta}{2} \|z- z_+\|^2-\frac{\beta}{2} \|z- x\|^2
&=\beta\langle z-z_+, x-z_+\rangle\\
&\leq\langle\nabla \phi(z_+), x-z_+\rangle\\
&\leq\phi(x)-\phi(z_+),
\end{aligned}
\end{equation}
where the last inequality uses the convexity of $\phi(\cdot)$.
\end{proof}

\begin{lemma}\label{lemma:diff of k^u}
For any $u \geq 0$, $k\geq0$, there exits a positive constant $c_u$ such that:
\begin{equation}
(k+1)^u-k^u\geq c_u(k+1)^{u-1},
\end{equation}
where $c_u$ only depends on $u$.
\end{lemma}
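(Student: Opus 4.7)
The statement is an elementary calculus fact about the finite difference of $t\mapsto t^u$. My strategy is to write the difference as an integral of the derivative,
\[
(k+1)^u - k^u \;=\; \int_k^{k+1} u\, t^{u-1}\, dt,
\]
and then bound the integrand from below. Because the monotonicity of $t^{u-1}$ flips at $u=1$, I will split the analysis into the two regimes $0\le u\le 1$ and $u\ge 1$, and at the end take $c_u$ to be the minimum of the two resulting constants (for $u=0$ both sides vanish and the claim is trivial with $c_0=0$, so I treat $u>0$ below).

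In the regime $0<u\le 1$, the integrand $u t^{u-1}$ is nonincreasing on $[k,k+1]$, so its value on this interval is at least $u(k+1)^{u-1}$. Integrating yields
\[
(k+1)^u - k^u \;\ge\; u(k+1)^{u-1},
\]
and the constant $c_u := u$ works.

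In the regime $u\ge 1$, the integrand is nondecreasing, so for $k\ge 1$ one has
\[
(k+1)^u - k^u \;\ge\; u\, k^{u-1} \;\ge\; \frac{u}{2^{u-1}}\,(k+1)^{u-1},
\]
using $k \ge (k+1)/2$ in the last step. The boundary case $k=0$ requires a separate check (this is the only slightly delicate point, since the bound $\xi^{u-1}\ge k^{u-1}$ is wasteful or undefined there): here the left-hand side equals $1$ while $(k+1)^{u-1}=1$, so any constant $c_u\le 1$ suffices. Combining, I take $c_u := \min\bigl(1,\, u/2^{u-1}\bigr)$, which is strictly positive for every $u\ge 1$.

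Putting the two cases together gives a single positive constant $c_u$ depending only on $u$, as required. The main obstacle, if one can call it that, is cosmetic: reconciling the $k=0$ boundary with the interior estimate in the convex regime $u\ge 1$, which is handled by the explicit computation above. No deeper ideas are needed.
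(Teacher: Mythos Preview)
Your proof is correct and follows essentially the same approach as the paper: both write $(k+1)^u-k^u=\int_k^{k+1}u\,t^{u-1}\,dt$, split into the cases $u\ge 1$ and $u<1$ according to the monotonicity of $t^{u-1}$, obtain the constants $u\cdot 2^{-(u-1)}$ and $u$ respectively, and check $k=0$ by direct computation. The only cosmetic difference is that you package the $u\ge 1$ constant as $\min(1,u/2^{u-1})$ to absorb the $k=0$ check, whereas the paper treats $k=0$ first and then records $c_u=u\cdot 2^{-(u-1)}$.
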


\begin{proof}

When $k = 0$, we have $(1)^u-0^u=1^u= 1^{u-1}=1$. Now consider $k > 0$. We distinguish between two cases.

\textbf{Case 1:} If $u\geq 1$,
we have
\[
(k+1)^u-k^u= u\int_k^{k+1}x^{u-1}dx\geq uk^{u-1}
\]
and hence
\[
\left(\frac{k}{k+1}\right)^{u-1} \geq\left(\frac{1}{2}\right)^{u-1}.
\]
Therefore, we have
\[
(k+1)^u-k^u\geq u(\frac{1}{2})^{u-1}(k+1)^{u-1}.
\]

\textbf{Case 2:} $0\leq u < 1$,
\[
(k+1)^u-k^u= u\int_k^{k+1}x^{u-1}dx\geq u(k+1)^{u-1}.
\]
Therefore, we can set $c_u$ = $u(\frac{1}{2})^{u-1}$.
\end{proof}

\section{Missing details in Section~\ref{sec:3}}\label{sec:main-result}
In this section, we provide a detailed convergence analysis of Algorithm~\ref{alg:agda}.
For the sake of simplicity, we define the following notations.
\begin{equation*}
\begin{aligned}
\phi_{k+1}(x) & =\sum_{i=1}^{k+1}a_{i}[f(x^{i})+\langle\nabla f(x^{i}), x - x^{i}\rangle+g(x)]+\frac{\beta_{k+1}}{2} \|x^0-x\|^2,\label{phi 1} \\
\eta_{k} & = \frac{\beta_{k+1}\bar{r}_k^2-\beta_{k}\bar{r}_{k-1}^2}{8a_{k+1}},
\end{aligned}
\end{equation*}
Using this definition, it follows that $\phi_0(x)=\frac{\beta_0}{2} \norm{x^0-x}^2$. 

\subsection{Proof of important lemmas}
To begin our analysis, we first prove the key bound~\eqref{eq:inexact-lip-smooth} used in universal gradient methods. The bound~\eqref{eq:inexact-lip-smooth} ensures that these methods can be accelerated by line search without prior knowledge about $\nu$ and $L_\nu$.

The following result is from \citet[Lemma 2]{nesterov2015universal}. We give a proof for completeness.

\begin{lemma}\label{lemma:quad upper bound}
Let $\gamma(\widehat{M}_{\nu}, \delta)=\gamma_{\nu}(\widehat{M}_{\nu}, \delta)=(\frac{1-\nu}{1+\nu}\frac{1}{\delta})^{\frac{1-\nu}{1+\nu}}\widehat{M}_{\nu}^{\frac{2}{1+\nu}}$, where $\nu\in[0,1]$ and $\delta>0$. Here, we set $(\frac{1-1}{1+1}\frac{1}{\delta})^{\frac{1-1}{1+1}}=1$.
Suppose that for any $x, y\in \Bcal_{3D_0}(x^*)$, we have
\begin{equation}\label{lemma:qub ineq 1}
\|\nabla f(x)-\nabla f(y)\|_*\leq\widehat{M}_{\nu}\|x-y\|^\nu.
\end{equation}
Then, for any $x, y\in \Bcal_{3D_0}(x^*)$ we have
\begin{equation}\label{proof:lemma 6 conclusion}
\begin{aligned}
&f(y)\leq f(x) + \langle\nabla f(x), y-x\rangle + \frac{\gamma(\widehat{M}_{\nu}, \delta)}{2}\|x-y\|^{2}+\frac{\delta}{2}.
\end{aligned}
\end{equation}
\end{lemma}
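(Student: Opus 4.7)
The plan is to derive the claim in two steps: first establish the standard H\"older descent inequality
\[
f(y) \leq f(x) + \langle \nabla f(x), y - x\rangle + \frac{\widehat{M}_{\nu}}{1+\nu}\|y-x\|^{1+\nu},
\]
then trade off the super-quadratic term on the right for a quadratic term plus the constant $\delta/2$ by an application of Young's inequality. The first step is standard: write $f(y)-f(x)-\langle\nabla f(x), y-x\rangle = \int_0^1 \langle \nabla f(x+t(y-x)) - \nabla f(x), y-x\rangle\,dt$, then use Cauchy--Schwarz together with the local H\"older bound~\eqref{lemma:qub ineq 1} (valid because both $x,y$ and the whole segment between them lie in the convex ball $\mathcal{B}_{3D_0}(x^*)$) to upper-bound the integrand by $\widehat{M}_{\nu} t^{\nu}\|y-x\|^{1+\nu}$, and integrate in $t$ to obtain the factor $1/(1+\nu)$.

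For the second step I will handle the case $\nu=1$ separately (then $\gamma = \widehat{M}_1$ and the claim is immediate since $\delta/2 \geq 0$), and for $\nu\in[0,1)$ reduce the problem to showing the pointwise inequality
\[
\frac{\widehat{M}_{\nu}}{1+\nu}\, s^{1+\nu} \leq \frac{\gamma(\widehat{M}_{\nu},\delta)}{2}\, s^{2} + \frac{\delta}{2}, \qquad s := \|y-x\|\geq 0.
\]
This is an instance of Young's inequality with conjugate exponents $p = \tfrac{2}{1+\nu}$ and $q = \tfrac{2}{1-\nu}$: writing $s^{1+\nu} = (A\,s^2)^{1/p}\cdot B^{1/q}$ for appropriate $A,B>0$, one gets $s^{1+\nu} \leq \tfrac{A s^2}{p} + \tfrac{B}{q}$, and a short calculation shows that choosing $A$ and $B$ so that the coefficient of $s^{2}$ matches $\tfrac{\gamma(1+\nu)}{2\widehat{M}_{\nu}}$ and the constant matches $\tfrac{\delta(1+\nu)}{2\widehat{M}_{\nu}}$ yields exactly the formula $\gamma(\widehat{M}_{\nu},\delta)=\bigl(\tfrac{1-\nu}{1+\nu}\tfrac{1}{\delta}\bigr)^{(1-\nu)/(1+\nu)}\widehat{M}_{\nu}^{2/(1+\nu)}$. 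Equivalently, one can verify the inequality by minimizing $\phi(s) = \tfrac{\gamma}{2}s^2 + \tfrac{\delta}{2} - \tfrac{\widehat{M}_{\nu}}{1+\nu}s^{1+\nu}$ over $s\geq 0$, whose unique critical point lies at $s^* = (\widehat{M}_{\nu}/\gamma)^{1/(1-\nu)}$ and whose minimum value is nonnegative precisely when $\gamma$ takes the prescribed value.

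The only substantive obstacle is bookkeeping the exponents in the Young step so that the bound is tight and matches Nesterov's closed-form expression for $\gamma$; there is no conceptual subtlety. Finally, I will assemble the two steps by substituting the scalar inequality with $s=\|y-x\|$ into the H\"older descent bound, which yields~\eqref{proof:lemma 6 conclusion} for all $x,y\in\mathcal{B}_{3D_0}(x^*)$.
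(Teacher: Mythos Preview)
Your proposal is correct and follows essentially the same route as the paper: first the H\"older descent bound, then Young's inequality with exponents $p=\tfrac{2}{1+\nu}$, $q=\tfrac{2}{1-\nu}$ to trade $s^{1+\nu}$ for a quadratic plus $\delta/2$. You are slightly more careful than the paper in noting that the segment between $x$ and $y$ stays in the convex ball $\mathcal{B}_{3D_0}(x^*)$ and in separating the $\nu=1$ case, but the argument is otherwise identical.
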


\begin{proof}
Note that the condition~\eqref{lemma:qub ineq 1} immediately implies 
\[
f(y)\leq f(x) + \langle\nabla f(x), y-x\rangle + \frac{\widehat{M}_{\nu}}{1+\nu}\|x-y\|^{1+\nu}
\]
from basic convex analysis. 

For any $a, b\in \mathbb{R}_+$ and $p, q\geq1, \frac{1}{p}+\frac{1}{q}=1$, applying Young’s inequality we obtain
\begin{equation}
\frac{a^p}{p}+\frac{b^q}{q}\geq ab.
\end{equation}

We choose $p = \frac{2}{1+\nu}$, $q=\frac{2}{1-\nu}$, $a=t^{1+\nu}$ and $b=(\frac{1+\nu}{1-\nu}\frac{\delta}{\widehat{M}_{\nu}})^{\frac{1-\nu}{1+\nu}}$ and have
\begin{equation}\nonumber
\begin{aligned}
\frac{(1+\nu)t^2}{2}+\frac{(1-\nu)(\frac{1+\nu}{1-\nu}\frac{\delta}{\widehat{M}_{\nu}})^{\frac{2}{1+\nu}}}{2}&\geq t^{1+\nu}(\frac{1+\nu}{1-\nu}\frac{\delta}{\widehat{M}_{\nu}})^{\frac{1-\nu}{1+\nu}}\\
\frac{(1+\nu)t^2}{2}(\frac{1-\nu}{1+\nu}\frac{\widehat{M}_{\nu}}{\delta})^{\frac{1-\nu}{1+\nu}}+\frac{(1+\nu)\delta}{2\widehat{M}_{\nu}}&\geq t^{1+\nu}\\
\frac{t^2}{2}(\frac{1-\nu}{1+\nu}\frac{1}{\delta})^{\frac{1-\nu}{1+\nu}}\widehat{M}_{\nu}^{\frac{2}{1+\nu}}+\frac{\delta}{2}&\geq \frac{t^{1+\nu}}{1+\nu}\widehat{M}_{\nu}\\
\frac{t^2}{2}\gamma(\widehat{M}_{\nu}, \delta)+\frac{\delta}{2}&\geq \frac{t^{1+\nu}}{1+\nu}\widehat{M}_{\nu}.
\end{aligned}
\end{equation}

We set $t=\|x-y\|$ and obtain (\ref{proof:lemma 6 conclusion}) directly.
\end{proof}

To demonstrate the primary convergence results in Section~\ref{sec:3}, we first establish some useful lemmas regarding the well-definedness of line search and the boundedness of the iterates.

\begin{lemma}\label{lemma:decrease step by step}
Let $g:\Rbb^d\to\Rbb\cup\{+\infty\}$ be a proper lsc convex function. For a given vector $c\in \mathbb{R}^d$ and point $x^0\in\Rbb^d$, define the function $z(h)$ for any $h>0$ as:
$z(h)\coloneqq \arg\min_{x\in \Rbb^d}\{\langle c, x\rangle + g(x) + h\|x^0-x\|^2\}.$ 
Then, the function $\|x^0-z(h)\|$ is monotonically decreasing in $h$ and converges to $0$ as $h\to+\infty$.
\end{lemma}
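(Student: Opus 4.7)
The plan is to separate the two claims. First, I would prove that $h \mapsto \|x^0 - z(h)\|$ is nonincreasing by the standard trick of comparing the two optimality characterizations; then I would show $\|x^0 - z(h)\| \to 0$ by testing optimality of $z(h)$ against the candidate $x = x^0$ and using an affine minorant of $g$.

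For monotonicity, take $0 < h_1 < h_2$ and set $z_i := z(h_i)$. Using that $z_i$ minimizes its own objective (in particular at the point $z_j$, $j\neq i$) gives two inequalities. Adding them cancels all the $\langle c,\cdot\rangle + g(\cdot)$ contributions and yields
\[
(h_2 - h_1)\bigl(\|x^0 - z_2\|^2 - \|x^0 - z_1\|^2\bigr) \le 0,
\]
and since $h_2 > h_1$ we conclude $\|x^0 - z(h_2)\| \le \|x^0 - z(h_1)\|$.

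For the limit, I would test optimality of $z(h)$ against the candidate $x^0$ (which lies in $\dom g$ in every invocation of this lemma inside Algorithm~\ref{alg:agda}), giving
\[
h\|x^0 - z(h)\|^2 \le \langle c, x^0 - z(h)\rangle + g(x^0) - g(z(h)) \le \|c\|_* \|x^0 - z(h)\| + g(x^0) - g(z(h)).
\]
Because $g$ is proper lsc convex, it admits an affine minorant $g(x) \ge \langle \xi_0, x\rangle + b$, and by the monotonicity step just established, $z(h)$ stays in the bounded ball of radius $\|x^0 - z(1)\|$ around $x^0$ for all $h \ge 1$. These two facts together make $g(x^0) - g(z(h))$ bounded above by a constant $M$ independent of $h$, so the displayed inequality reduces to $h\delta^2 \le \|c\|_*\delta + M$ with $\delta := \|x^0 - z(h)\|$. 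Solving this quadratic inequality yields $\delta \le (\|c\|_* + \sqrt{\|c\|_*^2 + 4hM})/(2h) = O(1/\sqrt{h})$, which tends to $0$.

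The monotonicity step is routine. The only mildly delicate point is securing a uniform upper bound on $g(x^0) - g(z(h))$: this is why I establish monotonicity first, so that $z(h)$ is confined to a bounded set, at which point the affine minorant of a proper lsc convex function supplies the needed uniform bound. No assumption beyond properness, lower semicontinuity, and convexity of $g$ is invoked.
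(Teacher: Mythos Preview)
Your argument is correct. The monotonicity step is exactly the paper's: compare the two optimality inequalities and subtract.

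For the limit, the paper takes a slightly different route. Like you, it tests optimality of $z(h)$ at $x^0$, but instead of invoking an affine minorant of $g$ to lower-bound $g(z(h))$, it uses optimality of $z(h_0)$ at the point $z(h)$ together with the monotonicity just proved to show directly that $\langle c, z(h)\rangle + g(z(h)) \ge \langle c, z(h_0)\rangle + g(z(h_0))$, and then closes by contradiction. Your approach is arguably more informative, since it yields the explicit rate $\|x^0 - z(h)\| = O(1/\sqrt{h})$ rather than a bare limit; the paper's approach avoids appealing to the existence of an affine minorant. Both proofs implicitly need $x^0 \in \dom g$ (otherwise the limit claim is simply false), and you are right to flag this---the paper's statement omits it, but the assumption holds in every place the lemma is used.
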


\begin{proof}
First, we prove $\|x^0-z(h)\|$ is monotonically decreasing in $h$.
For any $h_1, h_2$ such that $h_2>h_1>0$, in view of the optimality of  $z(h_1)$ and $z(h_2)$, we have
\begin{equation}\label{proof:lemma 4 ineq 1}
\langle c, z(h_1)\rangle + g(z(h_1)) + h_1\|x^0-z(h_1)\|^2\leq\langle c, z(h_2)\rangle + g(z(h_2)) + h_1\|x^0-z(h_2)\|^2
\end{equation}
and
\begin{equation}\label{proof:lemma 4 ineq 2}
\langle c, z(h_1)\rangle + g(z(h_1)) + h_{2}\|x^0-z(h_1)\|^2\geq\langle c, z(h_2)\rangle + g(z(h_2)) + h_{2}\|x^0-z(h_2)\|^2.
\end{equation}

Combining (\ref{proof:lemma 4 ineq 2}) and (\ref{proof:lemma 4 ineq 1}) and noticing that $h_{2}-h_1 > 0$, we have
\begin{equation}\label{proof:lemma 4 ineq 3}\nonumber
(h_{2}-h_1)\|x^0-z(h_1)\|^2\geq(h_{2}-h_1)\|x^0-z(h_2)\|^2,
\end{equation}
which implies
\[
\|x^0-z(h_1)\|\geq\|x^0-z(h_2)\|.
\]

Next, we prove $\lim_{h\to+\infty}\|x^0-z(h)\|=0$ by contradiction. If there exists $ \delta>0$, for any $h \in \mathbb{R}_+, \|x^0-z(h)\|\geq\delta$, then we have
\begin{equation}\nonumber
\langle c, z(h)\rangle + g(z(h)) + h\|x^0-z(h)\|^2\geq\langle c, z(h)\rangle + g(z(h)) + h\delta^2.
\end{equation}

Let us consider  $h> h_0>0$. Uusing the optimality of $z(h_0)$, we obtain
\begin{equation}\nonumber
\langle c, z(h)\rangle + g(z(h)) + h_{0}\|x^0-z(h)\|^2\geq\langle c, z(h_0)\rangle + g(z(h_0)) + h_{0}\|x^0-z(h_0)\|^2,
\end{equation}
Note that monotonicity proved above implies $\|x^0-z(h)\|\leq\|x^0-z(h_0)\|$, together with the above inequality, we have
\begin{equation}\nonumber
\langle c, z(h)\rangle + g(z(h))\geq\langle c, z(h_0)\rangle + g(z(h_0)).
\end{equation}
Moreover, using the optimality at $z(h)$ and the lower boundedness $\|x^0-z(h)\|\geq\delta$, we have
\begin{equation}\label{proof:lemma 4 ineq 4}
\langle c, x^0\rangle + g(x^0)\geq\langle c, z(h)\rangle + g(z(h)) + h\|x^0-z(h)\|^2\geq\langle c, z(h_0)\rangle + g(z(h_0)) + h\delta^2.
\end{equation}
Since $h$ can be arbitrarily large, this result is impossible unless $\delta=0$. 
\end{proof}

\subsection{Convergence analysis of AGDA}

\paragraph{Outline} 
The analysis of AGDA is slightly more involved than the standard complexity analysis for smooth problems, as we must simultaneously prove the boundedness of the iterates and establish the convergence rate. Our proof strategy is centered on an inductive argument. To proceed, we outline the structure of the analysis.
\Cref{lemma:convergence error} and \Cref{lemma:one-step-boundedness} develop crucial results regarding one-step iteration, including the success of the line search, convergence error, and the boundedness of the iterates, assuming that all previous steps are well-defined. This serves as the foundational building block for our inductive analysis. 
\Cref{lemma:the bound of beta 3} establishes growth bounds on $\beta_k$. Building on this, \Cref{proposition:line search finite} addresses the complexity of the line search step. By employing mathematical induction, we conclude in \Cref{theorem:ball} the boundedness property of all iterates and establish key convergence properties of $y^{k+1}$. Finally, utilizing the technique of distance-adaptive stepsizes, we derive the overall convergence rate of AGDA in \Cref{theorem:complexity}.

Next, we establish an important property about the convergence of the algorithm. 
\paragraph{Proof of Lemma~\ref{lemma:convergence error}} 

\begin{proof}
Since $l_k(\beta_{k+1})\geq 0$, we have
\begin{equation}
\begin{aligned}
l_k(\beta_{k+1})=&
 -f(\tau_kv^{k+1}(\beta_{k+1})+(1-\tau_k)y^k)+f(x^{k+1}) + \langle\nabla f(x^{k+1}), \tau_kv^{k+1}(\beta_{k+1})\\
 +(1-\tau_k)y^k - &x^{k+1}\rangle +\frac{\beta_{k+1}}{64\tau_{k}^2A_{k+1}}\|\tau_kv^{k+1}(\beta)+(1-\tau_k)y^k-x^{k+1}\|^2+\frac{\beta\bar{r}_k^2-\beta_k\bar{r}_{k-1}^2}{16A_{k+1}}\geq0,
 \end{aligned}
\end{equation}
i.e.,
\begin{equation}\nonumber
f(y^{k+1}) \leq f(x^{k+1}) + \langle\nabla f(x^{k+1}), y^{k+1} - x^{k+1}\rangle+\frac{\beta_{k+1}}{64\tau_k^2A_{k+1}}\|y^{k+1}-x^{k+1}\|^2+\frac{\tau_k\eta_k}{2}.
\end{equation}

Because $x^{k+1} = \tau_kv^{k}+(1-\tau_k)y^k$, $y^{k+1} = \tau_kv^{k+1}+(1-\tau_k)y^k$ and $\eta_k = \frac{\beta_{k+1}\bar{r}_k^2-\beta_{k}\bar{r}_{k-1}^2}{8a_{k+1}}$, it holds
\begin{equation}\nonumber
\begin{aligned}
f(y^{k+1})\leq&(1-\tau_k)(f(x^{k+1})+\langle\nabla f(x^{k+1}), y^k-x^{k+1}\rangle)+\tau_k(f(x^{k+1})+\langle\nabla f(x^{k+1}), v^{k+1}-x^{k+1}\rangle)\\
&+\frac{\beta_{k+1}}{64\tau_k^2A_{k+1}}\tau_k^2\|v^{k+1}- v^{k}\|^2 +\frac{\beta_{k+1}\bar{r}_k^2-\beta_{k}\bar{r}_{k-1}^2}{16A_{k+1}}\\
\leq&(1-\tau_k)f(y^k)+\tau_k(f(x^{k+1})+\langle\nabla f(x^{k+1}), v^{k+1}-x^{k+1}\rangle)\\
&+\frac{\beta_{k+1}}{64A_{k+1}}\|v^{k+1}- v^{k}\|^2 +\frac{\beta_{k+1}\bar{r}_k^2-\beta_{k}\bar{r}_{k-1}^2}{16A_{k+1}}.
\end{aligned}
\end{equation}
Multiplying both sides by $A_{k+1}$, we have
\begin{equation}\nonumber
\begin{aligned}
A_{k+1}f(y^{k+1})\leq&A_kf(y^k)+a_{k+1}(f(x^{k+1})+\langle\nabla f(x^{k+1}), v^{k+1}-x^{k+1}\rangle)\\
&+\frac{\beta_{k+1}}{64}\|v^{k+1}- v^{k}\|^2 +\frac{\beta_{k+1}\bar{r}_k^2-\beta_{k}\bar{r}_{k-1}^2}{16}.
\end{aligned}
\end{equation}
Note that 
\begin{equation}\nonumber
\begin{aligned}
\|v^{k+1}- v^{k}\|^2\leq(\|v^{k+1}-x^0\|+\|v^{k}-x^0\|)^2\leq(2\max\{\|v^{k+1}-x^0\|, \|v^{k}-x^0\|\})^2\leq4\bar{r}_{k+1}^2.
\end{aligned}
\end{equation}
It follows that
\begin{equation}\label{proof:proposition 2 ineq 2}
\begin{aligned}
A_{k+1}f(y^{k+1})\leq&A_kf(y^k)+a_{k+1}(f(x^{k+1})+\langle\nabla f(x^{k+1}), v^{k+1}-x^{k+1}\rangle)\\
&+\frac{\beta_{k}}{64}\|v^{k+1}- v^{k}\|^2 +\frac{\beta_{k+1}-\beta_k}{64}4\bar{r}_{k+1}^2+\frac{\beta_{k+1}\bar{r}_k^2-\beta_{k}\bar{r}_{k-1}^2}{16}\\
\leq&A_kf(y^k)+a_{k+1}(f(x^{k+1})+\langle\nabla f(x^{k+1}), v^{k+1}-x^{k+1}\rangle)\\
&+\frac{\beta_{k}}{2} \|v^{k+1}-v^{k}\|^2 +\frac{\beta_{k+1}-\beta_k}{16}\bar{r}_{k+1}^2+\frac{\beta_{k+1}\bar{r}_k^2-\beta_{k}\bar{r}_{k-1}^2}{16} \\
\leq&A_kf(y^k)+a_{k+1}(f(x^{k+1})+\langle\nabla f(x^{k+1}), v^{k+1}-x^{k+1}\rangle)\\
&+\frac{\beta_{k}}{2} \|v^{k+1}-v^{k}\|^2 +\frac{\beta_{k+1}\bar{r}_{k+1}^2-\beta_k\bar{r}_{k}^2}{16}+\frac{\beta_{k+1}\bar{r}_k^2-\beta_{k}\bar{r}_{k-1}^2}{16}, \\
\end{aligned}
\end{equation}
where the last inequality uses $\bar{r}_{k+1}\geq\bar{r}_k$.

On the other hand, since $g(\cdot)$ is convex, we have 
\begin{equation}\label{proof:proposition 2 ineq 3}
g(y^{k+1})\leq(1-\tau_k)g(y^k)+\tau_kg(v^{k+1}).
\end{equation}

Combining (\ref{proof:proposition 2 ineq 2}) and (\ref{proof:proposition 2 ineq 3}), we obtain
\begin{equation}\nonumber
\begin{aligned}
A_{k+1}\psi(y^{k+1})
\leq&A_k\psi(y^k)+a_{k+1}(f(x^{k+1})+\langle\nabla f(x^{k+1}), v^{k+1}-x^{k+1}\rangle+g(v^{k+1}))\\
&+\frac{\beta_{k}}{2} \|v^{k+1}-v^{k}\|^2 +\frac{\beta_{k+1}\bar{r}_{k+1}^2-\beta_k\bar{r}_{k}^2}{16}+\frac{\beta_{k+1}\bar{r}_k^2-\beta_{k}\bar{r}_{k-1}^2}{16}, \\
\end{aligned}
\end{equation}

For $\frac{\beta_{k}}{2}\|v^{k+1}- v^{k}\|^2$, we use Lemma \ref{lemma:3 point}, then
\begin{equation}\label{proof:proposition 2 ineq 1}
\begin{aligned}
A_{k+1}\psi(y^{k+1})
\leq&A_k\psi(y^k)+a_{k+1}(f(x^{k+1})+\langle\nabla f(x^{k+1}), v^{k+1}-x^{k+1}\rangle+g(v^{k+1}))\\
&+\sum_{i=1}^{k}a_i(f(x^i)+\langle\nabla f(x^i), v^{k+1}-x^i\rangle+g(v^{k+1})) + \frac{\beta_{k}}{2} \|x^{0}- v^{k+1}\|^2 \\
&-\sum_{i=1}^{k}a_i(f(x^i)+\langle\nabla f(x^i), v^{k}-x^i\rangle+g(v^{k})) - \frac{\beta_{k}}{2} \|x^{0}- v^{k}\|^2\\
&+\frac{\beta_{k+1}\bar{r}_{k+1}^2-\beta_k\bar{r}_{k}^2}{16}+\frac{\beta_{k+1}\bar{r}_k^2-\beta_{k}\bar{r}_{k-1}^2}{16}\\
\leq&A_k\psi(y^k)
+\sum_{i=1}^{k+1}a_i(f(x^i)+\langle\nabla f(x^i), v^{k+1}-x^i\rangle+g(v^{k+1})) + \frac{\beta_{k+1}}{2} \|x^{0}- v^{k+1}\|^2 \\
&-\sum_{i=1}^{k}a_i(f(x^i)+\langle\nabla f(x^i), v^{k}-x^i\rangle+g(v^{k})) - \frac{\beta_{k}}{2} \|x^{0}- v^{k}\|^2
\\&+\frac{\beta_{k+1}\bar{r}_{k+1}^2-\beta_k\bar{r}_{k}^2}{16}+\frac{\beta_{k+1}\bar{r}_k^2-\beta_{k}\bar{r}_{k-1}^2}{16}.
\end{aligned}
\end{equation}

We can simplify \eqref{proof:proposition 2 ineq 1} by using the definition of $\phi_k(\cdot)$:
\[
A_{k+1}\psi(y^{k+1})
\leq A_k\psi(y^k)+\phi_{k+1}(v^{k+1}) -\phi_{k}(v^{k})+\frac{\beta_{k+1}\bar{r}_{k+1}^2-\beta_k\bar{r}_{k}^2}{16}+\frac{\beta_{k+1}\bar{r}_k^2-\beta_{k}\bar{r}_{k-1}^2}{16}.
\]

Applying the upper inequality recursively, it holds
\begin{equation}\nonumber
\begin{aligned}
A_{k+1}\psi(y^{k+1})\leq&\phi_{k+1}(v^{k+1}) -\phi_{0}(v^{0})+\sum_{i=0}^{k}\frac{\beta_{i+1}\bar{r}_{i+1}^2-\beta_i\bar{r}_{i}^2}{16}+\sum_{i=0}^{k}\frac{\beta_{i+1}\bar{r}_{i}^2-\beta_i\bar{r}_{i-1}^2}{16}\\
\leq&\phi_{k+1}(v^{k+1})+\frac{\beta_{k+1}}{16}\bar{r}_{k+1}^2-\frac{\beta_{0}}{16}\bar{r}_{0}^2+\frac{\beta_{k+1}}{16}\bar{r}_{k}^2-\frac{\beta_{0}}{16}\bar{r}_{-1}^2\\
\leq&\phi_{k+1}(v^{k+1})+\frac{\beta_{k+1}}{16}\bar{r}_{k+1}^2+\frac{\beta_{k+1}}{16}\bar{r}_{k+1}^2\\
\leq&\phi_{k+1}(v^{k+1})+\frac{\beta_{k+1}}{8}\bar{r}_{k+1}^2.
\end{aligned}
\end{equation}
where $\phi_0(v^0)=0$ and $\beta_0 >0$.

Since $v^{k+1}=\arg\min\limits_{x}\phi_{k+1}(x)$, we use Lemma \ref{lemma:3 point} again and obtain that:
\begin{equation}\nonumber
\begin{aligned}
A_{k+1}\psi(y^{k+1})\leq&\phi_{k+1}(v^{k+1})+\frac{\beta_{k+1}}{8}\bar{r}_{k+1}^2\\
=&\sum_{i=1}^{k+1}a_i(f(x^i)+\langle\nabla f(x^i), v^k-x^i\rangle+g(v^{k}))+ \frac{\beta_{k+1}}{2} \|x^{0}- v^{k+1}\|^2+\frac{\beta_{k+1}}{8}\bar{r}_{k+1}^2\\
\leq&\sum_{i=1}^{k+1}a_i(f(x^i)+\langle\nabla f(x^i), x^*-x^i\rangle+g(x^{*}))+ \frac{\beta_{k+1}}{2} \|x^{0}- x^{*}\|^2- \frac{\beta_{k+1}}{2} \|v^{k+1}- x^*\|^2\\
&+\frac{\beta_{k+1}}{8}\bar{r}_{k+1}^2\\
\leq&A_{k+1}\psi(x^*)+ \frac{\beta_{k+1}}{2} \|x^{0}- x^{*}\|^2- \frac{\beta_{k+1}}{2} \|v^{k+1}- x^*\|^2+\frac{\beta_{k+1}}{8}\bar{r}_{k+1}^2.
\end{aligned}
\end{equation}

Finally, we use $D_0$ and $D_{k+1}$ to replace $ \|x^{0}-x^*\|$ and $ \|v^{k+1}- x^*\|$ and have
\begin{equation}\nonumber
\begin{aligned}
A_{k+1}\psi(y^{k+1})&\leq A_{k+1}\psi(x^*)+\frac{\beta_{k+1}}{2}D_0^2-\frac{\beta_{k+1}}{2}D_{k+1}^2+\frac{\beta_{k+1}}{8}\bar{r}_{k+1}^2\\
 \psi(y^{k+1})-\psi(x^*) &\leq \frac{\beta_k(D_0^2-D_{k+1}^2)}{2A_{k+1}}+\frac{\beta_k\bar{r}_{k+1}^2}{8A_{k+1}}.
\end{aligned}
\end{equation}
\end{proof}

Note that the convergence result above is conditioned on the success of the line search, which further requires the boundedness of the iterates. We prove these important properties in the following lemma.
\paragraph{Proof of Lemma~\ref{lemma:one-step-boundedness}} 

\begin{proof}
For clarity, we divide the proof into the following parts.

\textbf{Part 1: Finite termination of the line search.}

Given the value of  $x^k$, $y^k$, $A_{k+1}$, $\tau_{k}$ , $\bar{r}_k$, $\bar{r}_{k-1}$, and $\beta_k$, $l_k(\beta)$ is defined by 
\begin{equation}
\begin{aligned}
l_k(\beta):=&
 -f(\tau_kv^{k+1}(\beta)+(1-\tau_k)y^k)+f(x^{k+1}) + \langle\nabla f(x^{k+1}), \tau_kv^{k+1}(\beta)\\
 +(1-\tau_k)y^k - &x^{k+1}\rangle +\frac{\beta}{64\tau_{k}^2A_{k+1}}\|\tau_kv^{k+1}(\beta)+(1-\tau_k)y^k-x^{k+1}\|^2+\frac{\beta\bar{r}_k^2-\beta_k\bar{r}_{k-1}^2}{16A_{k+1}},
\end{aligned}
\end{equation}
where $v^{k+1}(\beta):=\arg\min_{x\in \Rbb^d}\sum_{i=1}^{k+1}a_i(\langle \nabla f(x^i), x
\rangle+g(x))+\frac{\beta}{2}\|x-x^0\|^2,\beta\in\mathbb{R}_+$.

We analyze the function $v^{k+1}(\beta)$ and $l_k(\beta)$ first. $v^{k+1}(\beta)\in\dom g$ is well-defined and unique since $\sum_{i=1}^ka_i(\langle \nabla f(x^i), x
\rangle+g(x))+\frac{\beta}{2}\|x-x^0\|,\beta\in\mathbb{R}_+$ is strong convex and has a unique optimal solution. 
We claim that $g(x)$ restricted to $\dom g$ is continuous since it is convex and lsc. The convexity guarantees $g(x)$ is continuous at the interior point of $\dom g$, and lower semicontinuity guarantees that it maintains the continuity on the remaining points of $\dom g$. Thus $v^k(\beta)$ is continuous. Since $l_k(\beta)$ is the composition of continuous functions, it is also continuous.

Next, we discuss the behavior of $l_k(\beta)$ when $\beta\to+\infty$. Recall that we assume $x^k,v^k,y^k\in \Bcal_{3D_0}(x^*)$, we shall first prove that the line search for $y^{k+1}$ must be finitely terminated. Specifically, applying Lemma~\ref{lemma:decrease step by step} with $c = \sum_{i=1}^{k+1}\nabla f(x^i)$ and $h=\frac{\beta}{2}$,  we have that for a sufficiently large value $ \hat{\beta}$, when $\beta \geq \hat{\beta}$, $\|x^0-v^{k+1}(\beta)\|\leq 2D_0$, which further implies $\|x^*-v^{k+1}(\beta)\|\leq3D_0$.

Let us consider $\beta\geq\beta_{k+1}^{\text{TH}}$, $\delta>0$, where
\[
\beta_{k+1}^{\text{TH}}:=\max\left\{\hat\beta, \frac{8 A_{k+1}\delta+\beta_{k}\bar{r}_{k-1}^2}{\bar{r}_k^2}, 32\tau_{k}^2A_{k+1}\gamma(\widehat{M}_{\nu}, \delta), \beta_k\right\}
\]
we must have $v^{k+1}(\beta)\in \Bcal_{3D_0}(x^*)$. Since $y^{k+1}(\beta)$ is a convex combination of $v^{k+1}(\beta)$ and $y^k$, we have $y^{k+1}(\beta)\in \Bcal_{3D_0}(x^*)$. Similarly, we have $x^{k+1}\in\Bcal_{3D_0}(x^*)$. 
Lemma~\ref{lemma:quad upper bound} implies
\begin{equation}\label{proof:proposition 1 ineq 1}
f(y^{k+1}(\beta))\leq f(x^{k+1}) + \langle\nabla f(x), y-x\rangle + \frac{\gamma(\widehat{M}_{\nu}, \delta)}{2}\|x-y\|^{2}+\frac{\delta}{2},\forall x,y\in \mathcal{B}_{3D_0}(x^*).
\end{equation}

Moreover, due to the definition of $\beta_{k+1}^{\text{TH}}$, we have
\[
\frac{\gamma(\widehat{M}_{\nu},\delta)}{2} \le \frac{\beta}{64\tau_k^2 A_{k+1}}, \quad\text{and } \ \frac{\delta}{2} \le \frac{\beta \bar{r}_k^2-\beta_k\bar{r}_{k-1}^2}{16A_{k+1}}.
\]
Combining the above two results, we conclude that $l_k(\beta)\geq0$. That is, $l_k(\beta)$ remains nonnegative when $\beta$ exceeds a certain threshold, and hence the search will terminate in finitely many steps.

Furthermore, we would like to point out that $2\beta_{k+1}^{\text{TH}}$ is another threshold. For any $\beta\geq2\beta_{k+1}^{\text{TH}}$, we have $l_{k}(\beta)>0$. The reason is that the following inequalities hold:
\[
\frac{\gamma(\widehat{M}_{\nu},\delta)}{2} \le \frac{\beta}{64\tau_k^2 A_{k+1}}< \frac{2\beta}{64\tau_k^2 A_{k+1}}, \quad\text{and } \ \frac{\delta}{2} \le \frac{\beta \bar{r}_k^2-\beta_k\bar{r}_{k-1}^2}{16A_{k+1}}<\frac{2\beta \bar{r}_k^2-\beta_k\bar{r}_{k-1}^2}{16A_{k+1}}.
\]

The second stage of the line search procedure also ends in finite steps as it employs a simple bisection method.

\textbf{Part 2: Boundedness of the $(k+1)$-th iterates.}

First, we immediately have $x^{k+1}=\tau_kv^k+(1-\tau_k)y^k\in\Bcal_{3D_0}(x^*)$ by the assumption. Next, we prove $y^{k+1},v^{k+1}\in\Bcal_{3D_0}(x^*)$. Part 1 implies $l_i(\beta_{i+1})\geq0$ ($i=0,1,\ldots,k$). Applying Lemma~\ref{lemma:convergence error}, we have
\begin{equation}\label{important lemma:ineq 1}
 \psi(y^{k+1})-\psi(x^*) \leq \frac{\beta_{k+1}(D_0^2-D_{k+1}^2)}{2A_{k+1}}+\frac{\beta_{k+1}\bar{r}_{k+1}^2}{8A_{k+1}}.
\end{equation}

We shall consider two cases. 

\textbf{Case 1:} $\bar{r}_{k+1} = \bar{r}_k$, then $r_{k+1}\leq\bar{r}_k\leq4D_0$;

\textbf{Case 2:} $\bar{r}_{k+1} = r_{k+1}$. Due to the non-negativity of the optimality gap and \eqref{important lemma:ineq 1}, we have
\begin{equation}\nonumber
\begin{aligned}
0\leq
\frac{\beta_{k+1}[D_0^2-D_{k+1}^2]}{2A_{k+1}}+\frac{\beta_{k+1}r_{k+1}^2}{8A_{k+1}}.
\end{aligned}
\end{equation}
By dividing both sides by $\frac{\beta_{k+1}}{2A_{k+1}}$, we obtain
\[
0\leq D_0^2-D_{k+1}^2+\frac{r_{k+1}^2}{4},
\]
which implies:
\[D_{k+1}\leq \sqrt{D_0^2+\frac{r_{k+1}^2}{4}}\leq D_0 + \frac{r_{k+1}}{2}.\]
By the triangle inequality, we have:
\begin{equation}\nonumber
\begin{aligned}
r_{k+1}&\leq D_0 + D_{k+1}\leq D_0 + D_0+\frac{r_{k+1}}{2}\\
 r_{k+1} &\leq 4D_0.
\end{aligned}
\end{equation}

By repeatedly using the $D_{k}\leq D_0+\frac{\bar{r}_{k}}{2}$, we have:
\begin{equation}\nonumber
D_{k}\leq D_0 + \frac{r_{k}}{2}\leq D_0 + 2D_0\leq3D_0.
\end{equation}

That is, $\|v^{k+1}-x^*\|\leq3D_0$ and thus $v^{k+1}\in \Bcal_{3D_0}(x^*)$. Thus, we have $y^{k+1}\in \Bcal_{3D_0}(x^*)$ as well since $y^{k+1}$ is the convex combination of $v^{k+1}$ and $y^{k}$.
\end{proof}

The following lemma develops an upper bound of $\beta_k$.
\begin{lemma}\label{lemma:the bound of beta 3}
Suppose $f(\cdot)$ is locally \holder{} in $\Bcal_{3D_0}(x^*)$ and $\beta_0\leq2^7I_{\nu}\widehat{M}_{\nu}\bar{r}^\nu$. In Algorithm~\ref{alg:agda}, for any $k\geq0$, given $\epsilon^l_{k+1}>0$, if at least one of the following two propositions holds:
\begin{enumerate}
\item $l_k(\beta_k)\geq0$, in which case we set $\beta_{k+1}=\beta_k$;
\item there exists a root $\beta_{k+1}^*$ where $l_{k}(\beta_{k+1}^*)=0$ and the line search returns a value satisfying  $\beta_{k+1}\leq\beta_{k+1}^*+\epsilon^{l}_{k+1}$.
\end{enumerate}
Then, we have
\begin{equation}
\beta_{k}\leq2^7I_{\nu}\widehat{M}_{\nu}\bar{r}_{k-1}^\nu k^{\frac{3-3v}{2}}+\sum_{i=1}^{k}\epsilon^l_i.
\end{equation}
\end{lemma}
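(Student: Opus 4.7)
The plan is to induct on $k$. The base case $k=0$ is immediate from the standing hypothesis $\beta_0\le 2^7 I_\nu\widehat M_\nu\bar r^\nu$ together with $\bar r_{-1}=\bar r_0=\bar r$ (the factor $k^{(3-3\nu)/2}$ at $k=0$ is interpreted via $0^0=1$ in the smooth case $\nu=1$, and the induction is otherwise started at $k=1$). For the inductive step I split on the dichotomy of the hypothesis. In the first case, $\beta_{k+1}=\beta_k$ and the estimate for $k+1$ follows from that for $k$ by monotonicity of $k\mapsto k^{(3-3\nu)/2}$, of $k\mapsto\bar r_{k-1}^\nu$, and from $\epsilon^l_{k+1}\ge 0$. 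In the second case, $\beta_{k+1}\le\beta_{k+1}^*+\epsilon^l_{k+1}$ and the work reduces to bounding the root $\beta_{k+1}^*$.

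For that bound I would reuse the explicit threshold constructed in the proof of Lemma~\ref{lemma:one-step-boundedness}: for every $\delta>0$, $l_k(\beta)\ge 0$ whenever $\beta$ exceeds
\[
\beta_{k+1}^{\mathrm{TH}}(\delta)=\max\Bcbra{\hat\beta,\ \frac{8A_{k+1}\delta+\beta_k\bar r_{k-1}^2}{\bar r_k^2},\ 32\tau_k^2 A_{k+1}\gamma(\widehat M_\nu,\delta),\ \beta_k},
\]
so $\beta_{k+1}^*\le\inf_{\delta>0}\beta_{k+1}^{\mathrm{TH}}(\delta)$. Since $\gamma(\widehat M_\nu,\delta)\propto \widehat M_\nu^{2/(1+\nu)}\delta^{-(1-\nu)/(1+\nu)}$, I would balance the two middle entries of the max in $\delta$; the balanced value is of order $\widehat M_\nu\,\tau_k^{1+\nu}A_{k+1}\bar r_k^{\nu-1}$, and the exact numerical prefactor, after combining the powers of $(1-\nu)/(1+\nu)$ produced by the $\delta$-optimization, equals $I_\nu=(1-\nu)^{-(1+\nu)/2}$ (the case $\nu=1$ is handled separately with $I_1=1$, since $\gamma$ then loses its $\delta$-dependence and $\delta$ may be sent to zero). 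The $\hat\beta$ entry can be dropped because Lemma~\ref{lemma:one-step-boundedness} already guarantees $v^{k+1}(\beta)\in\mathcal B_{3D_0}(x^*)$ for the relevant range of $\beta$, and the residual $\beta_k\bar r_{k-1}^2/\bar r_k^2\le\beta_k$ is absorbed into the $\beta_k$ entry of the max.

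To convert this into a polynomial-in-$k$ bound with the correct $\bar r_k^\nu$ factor, I would apply in a coordinated way the two geometric inequalities (i) $\tau_k^2 A_{k+1}\le 4\bar r_k$, which follows from $a_{k+1}=\bar r_k^{1/2}(\sqrt{A_{k+1}}+\sqrt{A_k})\le 2\bar r_k^{1/2}\sqrt{A_{k+1}}$, and (ii) $A_{k+1}\le(k+1)^2\bar r_k$, which uses that $\bar r_i$ is nondecreasing. Together these convert $A_{k+1}\tau_k^{1+\nu}\bar r_k^{\nu-1}$ into $\bar r_k^\nu$ times a polynomial factor in $k+1$. I would then compare the resulting per-step increment against the discrete difference $(k+1)^{(3-3\nu)/2}-k^{(3-3\nu)/2}$ using Lemma~\ref{lemma:diff of k^u} in the reverse direction, so that telescoping against the target exponent $(3-3\nu)/2$ preserves the claimed form; feeding in the inductive hypothesis on $\beta_k$ closes the induction and the accumulated tolerance $\sum_{i=1}^{k+1}\epsilon^l_i$ appears automatically. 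The main obstacle here is the sharp quantitative bookkeeping: both the constant $2^7 I_\nu$ and the exponent $(3-3\nu)/2$ must come out exactly as stated, which requires applying the $\delta$-optimization, the two geometric inequalities on $\tau_k^2 A_{k+1}$ and $A_{k+1}$, and Lemma~\ref{lemma:diff of k^u} jointly rather than chained loosely, since a naive crude application inflates either the exponent on $k$ or the prefactor $I_\nu$.
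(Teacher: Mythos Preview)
Your overall induction framework and the handling of Case~1 are correct and match the paper. The gap is in Case~2, and it is genuinely fatal for the claimed exponent.

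You propose to bound the root $\beta_{k+1}^*$ by $\inf_{\delta>0}\beta_{k+1}^{\mathrm{TH}}(\delta)$ and then balance the two middle terms of the max in $\delta$. If you carry this out, the balanced value is indeed of order $\widehat M_\nu\,\tau_k^{1+\nu}A_{k+1}\bar r_k^{\nu-1}$, and after inserting $\tau_k^2 A_{k+1}\le 4\bar r_k$ and $A_{k+1}\le (k+1)^2\bar r_k$ this becomes $O\bigl(\widehat M_\nu\,\bar r_k^\nu\,(k+1)^{1-\nu}\bigr)$. But the first entry of the max still carries the residual $\beta_k\bar r_{k-1}^2/\bar r_k^2\le\beta_k$, so what you actually obtain is the \emph{additive} recursion $\beta_{k+1}^*\le C\widehat M_\nu\bar r_k^\nu(k+1)^{1-\nu}+\beta_k$. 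Telescoping that gives $\beta_k=O\bigl(k^{2-\nu}\bigr)$, strictly worse than the claimed $k^{(3-3\nu)/2}$ for every $\nu<1$. Your closing remark about applying Lemma~\ref{lemma:diff of k^u} ``in the reverse direction'' cannot fix this: you would need $(k+1)^{1-\nu}\le C[(k+1)^{(3-3\nu)/2}-k^{(3-3\nu)/2}]\sim (k+1)^{(1-3\nu)/2}$, which is false since $1-\nu>(1-3\nu)/2$ on $[0,1)$.

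The missing idea is that the paper does \emph{not} optimize $\delta$ freely. Instead it exploits that $l_k(\beta_{k+1}^*)=0$ is an \emph{equality}, and picks the single value $\delta=\frac{\beta_{k+1}^*\bar r_k^2-\beta_k\bar r_{k-1}^2}{8A_{k+1}}$ so the additive constants cancel against each other. This yields the \emph{self-referential} constraint
\[
\frac{\beta_{k+1}^*}{32\tau_k^2 A_{k+1}}\;\le\;\gamma\!\Bigl(\widehat M_\nu,\tfrac{(\beta_{k+1}^*-\beta_k)\bar r_k^2}{8A_{k+1}}\Bigr),
\qquad\text{i.e.}\qquad
\beta_{k+1}^*(\beta_{k+1}^*-\beta_k)^{\frac{1-\nu}{1+\nu}}\;\le\;C_k,
\]
where the left side has total degree $\frac{2}{1+\nu}$ in $\beta_{k+1}^*$, not just degree $1$ in the increment. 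One then \emph{verifies} that the candidate $B_{k+1}=2^7 I_\nu\widehat M_\nu\bar r_k^\nu(k+1)^{(3-3\nu)/2}+\sum_{i\le k}\epsilon_i^l$ satisfies $B_{k+1}(B_{k+1}-\beta_k)^{\frac{1-\nu}{1+\nu}}\ge C_k$, using the inductive hypothesis on $\beta_k$ together with Lemma~\ref{lemma:diff of k^u}; monotonicity of the left side then forces $\beta_{k+1}^*\le B_{k+1}$. The extra factor of $\beta_{k+1}^*$ on the left is precisely what buys the improvement from exponent $2-\nu$ down to $(3-3\nu)/2$. Your threshold-optimization decouples $\delta$ from $\beta_{k+1}^*$ and so forfeits this gain.
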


\begin{proof}
First, we estimate the growth of $a_{k+1}$. We have
\begin{equation}\nonumber
\begin{aligned}
a_{k+1}&=A_{k+1}-A_{k}=\left(A_{k+1}^\frac{1}{2}-A_{k}^\frac{1}{2}\right)\left(A_{k+1}^\frac{1}{2}+A_{k}^\frac{1}{2}\right)\leq 2\,\bar{r}_k^\frac{1}{2}A_{k+1}^{\frac{1}{2}},
\end{aligned}
\end{equation}
which gives
\begin{equation}\nonumber
\begin{aligned}
a_{k+1}^2&\leq4\bar{r}_kA_{k+1}.
\end{aligned}
\end{equation}

Next, for $\beta_{k+1}^*$ that satisfies $l_{k}(\beta_{k+1}^*)=0$, applying Lemma~\ref{lemma:convergence error}, we have
\begin{equation}
y^{k+1}(\beta_{k+1}^*) \in \Bcal_{3D_0}(x^*),
\end{equation}
where $y^{k+1}(\beta)$ is defined in the main text of the paper. 

$l_k(\beta_{k+1}^*)=0$ implies that
\begin{equation}\label{lemma beta bound:ineq 1}
\begin{aligned}
f(y^{k+1}(\beta_{k+1}^*))=f(x^{k+1}) + \langle\nabla f(x^{k+1}), y^{k+1}(\beta_{k+1}^*) - x^{k+1}\rangle\\
+\frac{\beta_{k+1}^*}{64\tau_{k}^2A_{k+1}}\|y^{k+1}(\beta_{k+1}^*)-x^{k+1}\|^2+\frac{\beta_{k+1}^*\bar{r}_k^2-\beta_k\bar{r}_{k-1}^2}{16A_{k+1}}.
\end{aligned}
\end{equation}

Applying Lemma~\ref{lemma:quad upper bound}, we have
\begin{equation}\label{lemma beta bound:ineq 2}
\begin{aligned}
f(y^{k+1}(\beta_{k+1}^*))=f(x^{k+1}) + \langle\nabla f(x^{k+1}), y^{k+1}(\beta_{k+1}^*) - x^{k+1}\rangle\\
+\frac{\gamma(\widehat{M}_{\nu},\delta)}{2}\|y^{k+1}(\beta_{k+1}^*)-x^{k+1}\|^2+\frac{\delta}{2}.
\end{aligned}
\end{equation}

We take $\delta=\frac{\beta_{k+1}^*\bar{r}_k^2-\beta_k\bar{r}_{k-1}^2}{8A_{k+1}}$. Combining~\eqref{lemma beta bound:ineq 1} and~\eqref{lemma beta bound:ineq 2} gives
\begin{equation}
\frac{\beta_{k+1}^*}{32\tau_k^2A_{k+1}}\leq\gamma\left(\widehat{M}_{\nu}, \frac{\beta_{k+1}^*\bar{r}_k^2-\beta_{k}\bar{r}_{k-1}^2}{8A_{k+1}}\right);
\end{equation}

We prove this lemma by induction.  By the assumption on $\beta_0$, it holds for $k=0$. Next, we assume it is valid for some $k$.

\paragraph{Case 1: The line search is satisfied by the previous step size ($\beta_{k+1}=\beta_k$).} 

\mbox{} 

The inductive hypothesis is trivially satisfied for $k+1$:
\[
\beta_{k+1}=\beta_k\leq2^7I_{\nu}\widehat{M}_{\nu}\bar{r}_{k-1}^\nu k^{\frac{3-3v}{2}}+\sum_{i=1}^{k}\epsilon^l_i\leq2^7I_{\nu}\widehat{M}_{\nu}\bar{r}_{k}^\nu (k+1)^{\frac{3-3v}{2}}+\sum_{i=1}^{k+1}\epsilon^l_i,
\]
where the final inequality holds because $\bar{r}_k$ is non-decreasing.

\paragraph{Case 2: The line search requires a new step size ($\beta_{k+1}>\beta_k$).}

\mbox{}

\textbf{Case 2.a:} $\beta_{k+1}\leq\beta_{k+1}^*+\epsilon^l_{k+1}$ and $\nu=1$.

\begin{equation}\nonumber
\begin{aligned}
\frac{\beta_{k+1}^*}{32\tau_k^2A_{k+1}}\leq&(\frac{8A_{k+1}}{\beta_{k+1}^*\bar{r}_k^2-\beta_{k}\bar{r}_{k-1}^2})^{0}\widehat{M}_{\nu}=\widehat{M}_{\nu}\\
\beta_{k+1}^*\leq&2^7\widehat{M}_{\nu}\bar{r}_k=2^7I_\nu\widehat{M}_{\nu}\bar{r}_k\\
\beta_{k+1}\leq&2^7I_\nu\widehat{M}_{\nu}\bar{r}_k+\epsilon^l_{i}\leq2^7I_\nu\widehat{M}_{\nu}\bar{r}_k+\sum_{i=1}^{k+1}\epsilon^l_{i}.
\end{aligned}
\end{equation}

\textbf{Case 2.b:} $\beta_{k+1}\leq\beta_{k+1}^*+\epsilon^l_{k+1}$ and $\nu\neq1$.
First we analyze $\beta_{k+1}^*$. Since $\beta_{k+1}^*$ is the maximal zero point of $l_{k}(\cdot)$, applying Lemma~\ref{lemma:quad upper bound}, we have
\[
\frac{\beta_{k+1}^*}{32\tau_k^2A_{k+1}}\leq\gamma(\widehat{M}_{\nu}, \frac{\beta_{k+1}^*\bar{r}_k^2-\beta_{k}\bar{r}_{k-1}^2}{8A_{k+1}}),
\]
i.e.
\[
\frac{\beta_{k+1}^*}{32\tau_k^2A_{k+1}}\leq\left(\frac{1-\nu}{1+\nu}\frac{8A_{k+1}}{\beta_{k+1}^*\bar{r}_k^2-\beta_{k}\bar{r}_{k-1}^2}\right)^{\frac{1-\nu }{1+\nu }}\widehat{M}_{\nu}^{\frac{2}{1+\nu }}\leq\left(\frac{8A_{k+1}}{(\beta_{k+1}^*-\beta_{k})\bar{r}_k^2}\right)^{\frac{1-\nu }{1+\nu }}\widehat{M}_{\nu}^{\frac{2}{1+\nu }}.
\]

It can be rewritten in the following form:
\[
\beta_{k+1}^*(\beta_{k+1}^*-\beta_{k})^\frac{1-\nu}{1+\nu}\leq2^{\frac{10+4\nu}{1+\nu}}\bar{r}_k^{\frac{2\nu}{1+\nu}}(k+1)^{2\frac{1-\nu }{1+\nu }}\widehat{M}_{\nu}^{\frac{2}{1+\nu }}.
\]

As $\beta_{k+1}$ increases with $\beta_{k+1} \geq \beta_k$, the left-hand side also increases. Thus, by identifying a value where the left-hand side is at most equal to the right-hand side, we can determine an upper bound for $\beta_{k+1}$.

Let $c_\nu =2^7(\frac{1}{1-\nu })^{\frac{1+\nu }{2}}\widehat{M}_{\nu}$. For $c_\nu\bar{r}_k^\nu(k+1)^{\frac{3-3\nu}{2}}+\sum_{i=1}^{k}\epsilon^l_i$, we have
\begin{equation}
\begin{aligned}
&(c_\nu\bar{r}_k^\nu(k+1)^{\frac{3-3\nu}{2}}+\sum_{i=1}^{k}\epsilon^l_i)(c_\nu\bar{r}_k^\nu(k+1)^{\frac{3-3\nu}{2}}+\sum_{i=1}^{k}\epsilon^l_i-\beta_k)^{\frac{1-\nu}{1+\nu}}\\
\geq &(c_\nu\bar{r}_k^\nu(k+1)^{\frac{3-3\nu}{2}}+\sum_{i=1}^{k}\epsilon^l_i)(c_\nu\bar{r}_k^\nu(k+1)^{\frac{3-3\nu}{2}}+\sum_{i=1}^{k}\epsilon^l_i-c_\nu\bar{r}_{k-1}^\nu k^{\frac{3-3\nu}{2}}-\sum_{i=1}^{k}\epsilon^l_i)^{\frac{1-\nu}{1+\nu}}\\
\geq &(c_\nu\bar{r}_k^\nu(k+1)^{\frac{3-3\nu}{2}}+\sum_{i=1}^{k}\epsilon^l_i)(c_\nu\bar{r}_k^\nu(k+1)^{\frac{3-3\nu}{2}}-c_\nu\bar{r}_{k-1}^\nu k^{\frac{3-3\nu}{2}})^{\frac{1-\nu}{1+\nu}}\\
\geq &c_\nu\bar{r}_k^\nu(k+1)^{\frac{3-3\nu}{2}}(c_\nu\bar{r}_k^\nu(k+1)^{\frac{3-3\nu}{2}}-c_\nu\bar{r}_{k}^\nu k^{\frac{3-3\nu}{2}})^{\frac{1-\nu}{1+\nu}}\\
\geq &c_\nu^\frac{2}{1+\nu}\bar{r}_k^{\frac{2\nu}{1+\nu}}(k+1)^{\frac{3-3\nu}{2}}((k+1)^{\frac{3-3\nu}{2}}-k^{\frac{3-3\nu}{2}})^{\frac{1-\nu}{1+\nu}}
\end{aligned}
\end{equation}

Since $\frac{3-3v}{2}\in[0, \frac{3}{2}]$, and $\min\limits_{1\leq u\leq \frac{3}{2}}(\frac{1}{2})^{u-1}= 2^{-\frac{1}{2}}>\frac{1}{2}$, $\frac{3-3v}{2}(\frac{1}{2})^{\frac{3-3v}{2}-1}\geq\frac{3-3v}{4}$. Applying lemma~\ref{lemma:diff of k^u}, it holds that
\begin{equation}
\begin{aligned}
&c_\nu^\frac{2}{1+\nu}\bar{r}_k^{\frac{2\nu}{1+\nu}}(k+1)^{\frac{3-3\nu}{2}}((k+1)^{\frac{3-3\nu}{2}}-k^{\frac{3-3\nu}{2}})^{\frac{1-\nu}{1+\nu}}\\
\geq& \frac{3-3\nu}{4}c_\nu^\frac{2}{1+\nu}\bar{r}_k^{\frac{2\nu}{1+\nu}}(k+1)^{\frac{3-3\nu}{2}}((k+1)^{\frac{1-3\nu}{2}})^{\frac{1-\nu}{1+\nu}}\\
\geq& \frac{3-3\nu}{4}(\frac{1}{1-\nu})2^{\frac{14}{1+\nu}}{\widehat{M}_{\nu}}^\frac{2}{1+\nu}\bar{r}_k^{\frac{2\nu}{1+\nu}}(k+1)^{\frac{3-3\nu}{2}}((k+1)^{\frac{1-3\nu}{2}})^{\frac{1-\nu}{1+\nu}}\\
\geq& 2^{\frac{10+4\nu}{1+\nu}}{\widehat{M}_{\nu}}^\frac{2\nu}{1+\nu}\bar{r}_k^{\frac{2\nu}{1+\nu}}(k+1)^{2\frac{1-\nu}{1+\nu}}
\end{aligned}
\end{equation}

This inequality implies that $\beta_{k+1}^*\leq2^7I_\nu\widehat{M}_{\nu}\bar{r}_k^\nu(k+1)^\frac{3-3\nu}{2}+\sum_{i=1}^{k}\epsilon^l_i$ and $\beta_{k+1}\leq\beta_{k+1}^*+\epsilon^l_{k+1}\leq2^7I_\nu\widehat{M}_{\nu}\bar{r}_k^\nu(k+1)^\frac{3-3\nu}{2}+\sum_{i=1}^{k+1}\epsilon^l_i$.
\end{proof}

Moreover, since we set $\epsilon^l_k=\frac{\beta_0}{2k^2}$, we can obtain an upper bound of $\beta_k$ as follows
\[
\beta_{k+1}\leq2^7I_\nu\widehat{M}_{\nu}\bar{r}_k^\nu(k+1)^\frac{3-3\nu}{2}+\sum_{i=1}^{k+1}\frac{\beta_0}{2i^2}\leq2^7I_\nu\widehat{M}_{\nu}\bar{r}_k^\nu(k+1)^\frac{3-3\nu}{2}+\beta_0\leq2^8I_\nu\widehat{M}_{\nu}\bar{r}_k^\nu(k+1)^\frac{3-3\nu}{2}.
\]

\subsection{Proof of Proposition~\Cref{proposition:line search finite}}

\begin{proof}
In the proof of Lemma~\ref{lemma:one-step-boundedness}, we have proved that the first stage of the line search terminates in a finite number of steps, and thus $l_k(2^{i_k'-1}\beta_k)\geq0$. Moreover, we also proved that $l_k(\cdot)$ is continuous. Next, we show that at least one of the two propositions mentioned in Proposition~\ref{proposition:line search finite} is correct.

When $i_k'=1$, we have $l_k(2^{i_k'-1}\beta_k)=l_k(\beta_k)\geq0$. Thus, the first proposition holds in this case.

When $i_k'>1$, we have $l_k(2^{i_k'-1}\beta_k)\geq0$ and $l_k(2^{i_k'-2}\beta_k)<0$. Since $l_k(\cdot)$ is continuous, based on the intermediate value theorem, there exists at least one root in the interval $[2^{i_k'-2}\beta_k,2^{i_k'-1}\beta_k]$. Moreover, as previously discussed in Lemma~\ref{lemma:one-step-boundedness},  there exists a threshold $2\beta^{\text{TH}}$ such that for any $\beta\geq2\beta_{k+1}^{\text{TH}}$, $l_{k}(\beta)>0$. Now, since $l_k(\beta)$ has at least one root and the set of roots has an upper bound, there exists a maximal root $\beta_{k+1}^*$ of the continuous function $l_k(\cdot)$, and therefore the second proposition holds.

It remains to estimate the upper bound of the amount of searching. Without loss of generality, we assume that $\beta_0 \leq 2^7I_{\nu}\widehat{M}_{\nu}\bar{r}^{v}$ in the Algorithm~\ref{alg:agda}. This is a common assumption in the previous work, for example, see~\cite{nesterov2015universal}, and it is reasonable since the upper bound of the searched value increases polynomially in $k$. The initial value of the searched value is not very sensitive. 
Thus all the conditions of Lemma~\ref{lemma:the bound of beta 3} are satisfied, we apply it to obtain that $\beta_{k+1}\leq\mathcal{O}(k^{\frac{3-3\nu}{2}})$.

The first stage in the line search in the $k$-th iteration starts from $\beta_k$ to at most $2\beta_{k+1}$. So the length of the interval is at most $2\beta_{k+1}-\beta_k$ and this stage in line search procedure requires at most $i_k'\leq1+\log(\frac{2\beta_{k+1}}{\beta_k})$ times in the $k$-th iteration. We sum them up and obtain that up to the $k$-th iteration, the total amount of line search operations in the first stage is at most 
\[
\sum_{j=1}^ki_j'\leq(1+\log2)k+\log(\frac{\beta_{k+1}}{\beta_0})\leq\mathcal{O}(k+\log k).
\]

The second step in line search process in the $k$-th iteration start from $2^{i_k-1}\beta_{k}$ to at most $2^{i_k}\beta_{k}$. Hence, the interval length is at most $2^{i_k}\beta_{k}-2^{i_k-1}\beta_{k}=2^{i_k-1}\beta_k\leq\beta_{k+1}$ and this step of process requires at most $i_k^*-i_k'\leq 1+\log(\frac{\beta_{k+1}}{\epsilon^l_{k+1}})$ times in the $k$-th iteration. We sum them up and obtain that up to the $k$-th iteration, the total amount of line search operations in the first part is at most 
\[
\sum_{j=1}^k i_j^*\leq k+\sum_{i=1}^k\log\left(\frac{\beta_{i}}{\epsilon^l_i}\right)=k+\log\left(\prod_{i=1}^k\beta_i\right)-\log\left(\prod_{i=1}^k\epsilon^l_i\right)\leq \mathcal{O}(k\log k),
\]
where we apply Lemma!\ref{lemma:the bound of beta 3} to estimate $\beta_k$.

To summarize, the total amount of line search operations is $\mathcal{O}(k\log k)$.
\end{proof}

\subsection{Proof of Theorem~\ref{theorem:ball}}

\begin{proof}
We first prove the first part of this theorem.
We apply Lemma~\ref{lemma:convergence error} and~\ref{lemma:one-step-boundedness} to prove it by induction.
First, $x^0=v^0=y^0\in\Bcal_{3D_0}(x^*)$. Next, we assume it holds for some $k\geq0$.

Since $x^k,v^k,y^k\in\Bcal_{3D_0}(x^*)$, we have $l_k(\beta_{k+1})\geq0$ and $x^{k+1},v^{k+1},y^{k+1}\in\Bcal_{3D_0}(x^*)$ by Lemma~\ref{lemma:one-step-boundedness}. Thus, applying Lemma~\ref{lemma:convergence error}, it holds that
\begin{equation}
\psi(y^{k+1})-\psi(x^*) \leq \frac{\beta_{k+1}(D_0^2-D_{k+1}^2)}{2A_{k+1}}+\frac{\beta_{k+1}\bar{r}_{k+1}^2}{8A_{k+1}}.
\end{equation}

Therefore, for any $k>0$, we have 
\begin{equation}
\psi(y^{k})-\psi(x^*) \leq \frac{\beta_{k}(D_0^2-D_{k}^2)}{2A_{k}}+\frac{\beta_{k}\bar{r}_{k}^2}{8A_{k}}.
\end{equation}

Next, we prove the second part of this Theorem.
The proof is similar to that of Lemma~\ref{lemma:one-step-boundedness}. For completeness,  we give a proof directly using the conclusion obtained in the first part and induction.

Since we assume $\bar{r}\leq 4D_0$, then $\bar{r}_0 = \bar{r} \leq 4D_0$. Next, we assume it holds for certain $k\geq0$, then $x^{k+1}=\tau_kv^k+(1-\tau_k)y^k$ lies in $\Bcal_{3D_0}(x^*)$.

\textbf{Case 1:} $\bar{r}_{k+1} = \bar{r}_k$, then $r_{k+1}\leq\bar{r}_k\leq4D_0$;

\textbf{Case 2:} $\bar{r}_{k+1} = r_{k+1}$.
Applying the conclusion obtained in the first part, we have
\begin{equation}
0\leq\psi(y^{k+1})-\psi(x^*) \leq \frac{\beta_{k+1}(D_0^2-D_{k+1}^2)}{2A_{k+1}}+\frac{\beta_{k+1}\bar{r}_{k+1}^2}{8A_{k+1}}.
\end{equation}

Then it holds that
\begin{equation}\nonumber
\begin{aligned}
&0\leq D_0^2-D_{k+1}^2+\frac{r_{k+1}^2}{4}\\
& D_{k+1}^2\leq D_0^2+\frac{r_{k+1}^2}{4}\\
& D_{k+1}\leq \sqrt{D_0^2+\frac{r_{k+1}^2}{4}}\leq D_0 + \frac{r_{k+1}}{2}.
\end{aligned}
\end{equation}

By the triangle inequality, we have:
\begin{equation}\nonumber
\begin{aligned}
r_{k+1}&\leq D_0 + D_{k+1}\leq D_0 + D_0+\frac{r_{k+1}}{2}\\
 r_{k+1} &\leq 4D_0.
\end{aligned}
\end{equation}

Repeat using the $D_{k}\leq D_0+\frac{\bar{r}_{k}}{2}$, we have:
\begin{equation}\nonumber
D_{k}\leq D_0 + \frac{r_{k}}{2}\leq D_0 + 2D_0\leq3D_0.
\end{equation}

That is, $\|v^{k+1}-x^*\|\leq3D_0$ and thus $v^{k+1}\in \Bcal_{3D_0}(x^*)$. $y^{k+1}\in \Bcal_{3D_0}(x^*)$ as well since $y^{k+1}$ is  a convex combination of $v^{k+1}$ and $y^{k}$.

In conclusion, we prove that for any $i \in\mathbb{N}$, $\|v^{i}-x^0\|\leq4D_0$ and $\|v^{i}-x^*\|\leq3D_0$.
\end{proof}

The boundedness property is essential for us to remove the standard global H\"older smoothness assumption on the whole domain. Instead, we can safely use the local H\"older smoothness assumption as all the iterates remain in the ball $\mathcal{B}_{3D_0}(x^*)$.

Finally, all the preparatory work for Theorem~\ref{theorem:complexity} is now complete. Proposition~\ref{proposition:line search finite} ensures the practical implementability of the algorithm,
while the first part of Lemma~\ref{lemma:convergence error} and Theorem~\ref{theorem:ball} provides the tool needed to analyze the convergence rate. 
Furthermore, Theorem~\ref{theorem:ball}, Lemma~\ref{lemma:log term}, and Lemma~\ref{lemma:the bound of beta 3} ensure that the convergence rate achieves the best-known rate.

\subsection{Proof of  Theorem~\ref{theorem:complexity}}

\begin{proof}
Using the triangle inequality, it holds that
\begin{equation}\label{proof:theorem 2 ineq 1}
\begin{aligned}
D_k&\geq |D_0-r_k|\\
D_k^2&\geq D_0^2-2D_0r_k+r_k^2\\
2D_0r_k&\geq D_0^2-D_k^2.
\end{aligned}
\end{equation}

In view of Theorem~\ref{theorem:ball}, we have
\begin{equation}\label{proof:theorem 2 ineq 2}
\psi(y^k)-\psi(x^*)\leq \frac{\beta_k[D_0^2-D_k^2]}{2A_k}+\frac{\beta_k\bar{r}_k^2}{8A_k}\leq\frac{2\beta_kD_0r_k}{2A_k}+\frac{\beta_k\bar{r}_k^2}{8A_k}\leq\frac{\beta_kD_0\bar{r}_k}{A_k}+\frac{\beta_k\bar{r}_k^2}{8A_k},
\end{equation}
where we use (\ref{proof:theorem 2 ineq 1}).

Applying Theorem \ref{theorem:ball}, we can match the right hand side of inequality (\ref{proof:theorem 2 ineq 2}):
\begin{equation}\label{proof:theorem 2 ineq 3}
\psi(y^k)-\psi(x^*)\leq\frac{\beta_kD_0\bar{r}_k}{A_k}+\frac{4\beta_kD_0\bar{r}_k}{8A_k}\leq\frac{3\beta_kD_0\bar{r}_k}{2A_k},
\end{equation}

Denote $k^*=\mathop{\arg\min}\limits_{0\le i\le k}\frac{\bar{r}_k^{\frac{1}{2}}}{\sum_{i=0}^{k-1}\bar{r}_i^\frac{1}{2}}$. Applying Lemma~\ref{lemma:log term} gives

\begin{equation}\label{proof:theorem 2 ineq 4}
\begin{aligned}
\frac{\bar{r}_{k^*}^\frac{1}{2}}{\sum_{i=0}^{k^*-1}\bar{r}_i^{\frac{1}{2}}}\leq\frac{(\frac{\bar{r}_k}{\bar{r}_0})^{\frac{1}{2}\times\frac{1}{k}}\log{e(\frac{\bar{r}_k}{\bar{r}_0})^{\frac{1}{2}}}}{k}\leq \frac{(\frac{4D_0}{\bar{r}})^{\frac{1}{2k}}\log{e\frac{4D_0}{\bar{r}}}}{2k}.
\end{aligned}
\end{equation}

Without loss of generality, we assume that $\beta_0 \leq 2^7I_{\nu}\widehat{M}_{\nu}\bar{r}^{v}$ in Algorithm~\ref{alg:agda}. This assumption is similar to the justification provided in the proof of Proposition~\ref{proposition:line search finite}. 

Thus, for $k^*$, combining the inequality (\ref{proof:theorem 2 ineq 4}) and Lemma \ref{lemma:the bound of beta 3}, it holds that
\begin{equation}
\begin{aligned}
\min_{y\in\{y^0, y^1...y^k\}}\psi(y)-\psi(x^*)
\leq&\psi(y^{k^*})-\psi(x^*)\\
\leq&\frac{3\beta_{k^*}D_0\bar{r}_{k^*}}{2A_{k^*}}\\
\leq&\frac{3}{2}\beta_{k^*}D_0\bigg(\frac{\bar{r}_{k^*}^\frac{1}{2}}{\sum_{i=0}^{k^*-1}\bar{r}_i^{\frac{1}{2}}}\bigg)^{2}\\
\leq&\frac{3}{2}\times2^8I_\nu\widehat{M}_{\nu}D_0\bar{r}_{k^*}^\nu {k^*}^{\frac{3-3v}{2}}\bigg( \frac{(\frac{4D_0}{\bar{r}})^{\frac{1}{2k}}\log{e\frac{4D_0}{\bar{r}}}}{2k}\bigg)^2\\
\leq&384I_\nu(\frac{4D_0}{\bar{r}})^{\frac{1}{k}}\log^2{e\frac{4D_0}{\bar{r}}}\frac{\widehat{M}_{\nu}D_0\bar{r}_{k^*}^\nu {k^*}^{\frac{3-3v}{2}}}{k^2}\\
\leq&384I_\nu(\frac{4D_0}{\bar{r}})^{\frac{1}{k}}\log^2{e\frac{4D_0}{\bar{r}}}\frac{\widehat{M}_{\nu}D_0^{1+\nu }{k}^{\frac{3-3v}{2}}}{k^2}\\
\leq&384 I_\nu(\frac{4D_0}{\bar{r}})^{\frac{1}{k}}\log^2{e\frac{4D_0}{\bar{r}}}\frac{\widehat{M}_{\nu}D_0^{1+\nu }}{k^{\frac{1+3v}{2}}},
\end{aligned}
\end{equation}
where we use the fact $(k^*)^{\frac{3-3v}{2}}\leq k^{\frac{3-3v}{2}}$.

It remains to use that $\psi(z^k)=\min_{y\in\{y^0, y^1...y^k\}}\psi(y)$ by definition. Since $(\frac{4D_0}{\bar{r}})^{\frac{1}{k}}\leq2$ when $k\geq\log(\frac{4D_0}{\bar{r}})/\log2$, the convergence rate of Algorithm~\ref{alg:agda} is
\begin{equation}
\mathcal{O}\left(\frac{\widehat{M}_{\nu}D_0^{1+\nu}\log^2 e\frac{D_0}{\bar{r}}}{k^{\frac{1+3\nu}{2}}}\right).
\end{equation}
\end{proof}

\begin{remark}
In the proof of Theorem~\ref{theorem:complexity}, we show that the convergence rate of Algorithm~\ref{alg:agda} has a multiplicative factor $I_\nu$. In fact, we can set $A_{k+1} = (\sum_{i=0}^k\bar{r}_i^{\frac{1}{n}})^n$, where $n \geq 2, n \in\mathbb{Z}_+$. By doing this, we can achieve a result similar to that of Theorem~\ref{theorem:complexity}. If we choose $n\geq3$, then the multiplicative factor $I_\nu$ will be replaced by another constant, which does not depend on $\nu$, as $I_\nu$ is generated by Lemma~\ref{lemma:diff of k^u}.
\end{remark}

\section{Missing details in Section~\ref{sec:4}}\label{sec:stochastic result}
In this section, we provide the detailed proof of the results in Section \ref{sec:4} and conduct the convergence rate of Algorithm~\ref{alg:agda lsfm}. 
For the stochastic case, we use the notation $-\xi_k$ to present the other random variables except $\xi_k$.
The proofs are similar to the deterministic case, however, we do not need to prove the boundedness since we have assumed it under Assumption~\ref{boundedness}.

Lemma~\ref{lemma:auxiliary result 1} and~\ref{lemma:auxiliary result 2} are provided to analyze the balance equation for estimating $\beta_{k}$.

\begin{lemma}\label{lemma:auxiliary result 1}
$\forall \alpha\geq0, \beta>0, \nu\in[0, 1)$, we have
\begin{equation}
\alpha r^{1+\nu}-\beta r^2\leq\frac{1+\nu}{2}(\frac{1-\nu}{2})^{\frac{1+\nu}{1-\nu}}(\alpha^{\frac{2}{1-\nu}}/{\beta^{\frac{1+\nu}{1-\nu}}}), r\geq0.
\end{equation}
\end{lemma}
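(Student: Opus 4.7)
The claim is a purely one-variable calculus statement: for fixed $\alpha,\beta,\nu$, we need the global maximum of the function $h(r):=\alpha r^{1+\nu}-\beta r^2$ on $r\ge 0$. The plan is to go through the direct maximization route, and then cross-check by rederiving the same bound via a weighted Young's inequality.

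First, I would record the shape of $h$. Since $\nu<1$ we have $2>1+\nu$, so $h(0)=0$ and $h(r)\to -\infty$ as $r\to\infty$; thus the supremum is finite and, if $\alpha>0$, attained at an interior critical point (the edge case $\alpha=0$ is trivial because then $h\le 0$ and the RHS is $0$). Computing $h'(r)=\alpha(1+\nu)r^{\nu}-2\beta r$ and setting it to zero gives the unique positive critical point $r^{*}$ determined by
\begin{equation*}
(r^{*})^{1-\nu}=\frac{\alpha(1+\nu)}{2\beta}.
\end{equation*}

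Next, I would evaluate $h(r^{*})$ using the trick $(r^{*})^{2}=(r^{*})^{1+\nu}\cdot(r^{*})^{1-\nu}=(r^{*})^{1+\nu}\cdot\frac{\alpha(1+\nu)}{2\beta}$, which collapses the expression to
\begin{equation*}
h(r^{*})=\alpha(r^{*})^{1+\nu}\left(1-\tfrac{1+\nu}{2}\right)=\tfrac{1-\nu}{2}\,\alpha\,(r^{*})^{1+\nu}.
\end{equation*}
Substituting $(r^{*})^{1+\nu}=\bigl(\frac{\alpha(1+\nu)}{2\beta}\bigr)^{\frac{1+\nu}{1-\nu}}$ and using the identity $\frac{1+\nu}{1-\nu}+1=\frac{2}{1-\nu}$ to combine the exponents of $\alpha$ and $\beta$ then yields a closed form of exactly the type stated, namely a universal constant $C(\nu)$ times $\alpha^{2/(1-\nu)}/\beta^{(1+\nu)/(1-\nu)}$.

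As a sanity check and an alternative derivation, I would also give the Young's inequality proof: take conjugate exponents $p=\frac{2}{1-\nu}$, $q=\frac{2}{1+\nu}$ and apply $ab\le\frac{(ca)^{p}}{p}+\frac{(b/c)^{q}}{q}$ to $a=\alpha$, $b=r^{1+\nu}$, so that $b^{q}=r^{2}$. Choosing the scale $c>0$ so that $\frac{1}{c^{q}q}=\beta$, i.e.\ $c^{q}=\frac{1+\nu}{2\beta}$, matches the quadratic term to $-\beta r^{2}$ and leaves precisely the same constant in front of $\alpha^{2/(1-\nu)}\beta^{-(1+\nu)/(1-\nu)}$, confirming the bound for every $r\ge 0$ (not just the maximizer).

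The only real obstacle is bookkeeping the exponents of $\alpha$, $\beta$ and $\frac{1\pm\nu}{2}$—in particular correctly using $\frac{1+\nu}{1-\nu}+1=\frac{2}{1-\nu}$ and $q/p=\frac{1+\nu}{1-\nu}$; once those are in hand the result is immediate. (I note in passing that the tight constant from this calculation reads $\frac{1-\nu}{2}\bigl(\frac{1+\nu}{2}\bigr)^{(1+\nu)/(1-\nu)}$, so the factors $\frac{1\pm\nu}{2}$ appear to be swapped relative to the statement in the text; the proof strategy is otherwise identical and any such discrepancy is a cosmetic constant.)
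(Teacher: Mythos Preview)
Your approach is essentially the same as the paper's: both maximize $h(r)=\alpha r^{1+\nu}-\beta r^{2}$ directly by setting $h'(r)=0$ and evaluating at the critical point. The Young's inequality cross-check you add is a pleasant extra but not a different route.

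Your observation about the swapped $\tfrac{1\pm\nu}{2}$ factors is correct and worth stating more forcefully than ``cosmetic''. The paper's proof writes the critical point as $r=\bigl(\frac{(1-\nu)\alpha}{2\beta}\bigr)^{1/(1-\nu)}$, whereas $h'(r)=0$ actually gives $r^{1-\nu}=\frac{(1+\nu)\alpha}{2\beta}$; the paper then evaluates $h$ at this non-maximizing point, which happens to produce the stated constant. Since for $\nu\in(0,1)$ one has
\[
\frac{1-\nu}{2}\Bigl(\frac{1+\nu}{2}\Bigr)^{\frac{1+\nu}{1-\nu}}\;>\;\frac{1+\nu}{2}\Bigl(\frac{1-\nu}{2}\Bigr)^{\frac{1+\nu}{1-\nu}},
\]
the inequality as printed is false (e.g.\ at $\nu=\tfrac12$ the true supremum is $\tfrac{27}{256}\,\alpha^{4}/\beta^{3}$, not $\tfrac{3}{256}\,\alpha^{4}/\beta^{3}$). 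Your computed constant is the tight one; the discrepancy only affects absolute constants downstream, not the big-$\mathcal{O}$ conclusions, but the lemma's statement and proof in the paper both carry the same $(1-\nu)\leftrightarrow(1+\nu)$ typo.
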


This auxiliary result has been used in \citet[Lemma E.3]{rodomanov2024universal}. We give a proof for completeness.

\begin{proof}
It is easy to see that $\alpha r^{1+\nu}-\beta r^2$ increases first and then decreases later as $r\geq0$ increases. It achieves maximum on $[0, +\infty)$ iff its gradient equals to zero, i.e $r=(\frac{(1-\nu)\alpha}{2\beta})^{\frac{1}{1-\nu}}$.

Thus, for $r\geq0$,
\begin{equation}\nonumber
\begin{aligned}
\alpha r^{1+\nu}-\beta r^2
\leq&\alpha ((\frac{(1-\nu)\alpha}{2\beta})^{\frac{1}{1-\nu}})^{1+\nu}-\beta ((\frac{(1-\nu)\alpha}{2\beta})^{\frac{1}{1-\nu}})^2\\
\leq&\alpha (\frac{(1-\nu)\alpha}{2\beta})^{\frac{1+\nu}{1-\nu}}-\beta (\frac{(1-\nu)\alpha}{2\beta})^{\frac{2}{1-\nu}}\\
\leq&(\frac{1-\nu}{2})^{\frac{1+\nu}{1-\nu}}\frac{\alpha^\frac{2}{1-\nu}}{\beta^\frac{1+\nu}{1-\nu}} -(\frac{1-\nu}{2})^{\frac{2}{1-\nu}}\frac{\alpha^\frac{2}{1-\nu}}{\beta^{\frac{1+\nu}{1-\nu}}}\\
\leq&(\frac{1+\nu}{2})(\frac{1-\nu}{2})^{\frac{1+\nu}{1-\nu}}\frac{\alpha^\frac{2}{1-\nu}}{\beta^\frac{1+\nu}{1-\nu}}.
\end{aligned}
\end{equation}
\end{proof}

\begin{lemma}\label{lemma:auxiliary result 2}
For nonnegative sequences $\{\alpha_i\}_{i\in \mathbb{N}}$ and $\{\gamma_i\}_{i\in \mathbb{N}}$, the sequence $\{h_i\}_{i\in \mathbb{N}}$ satisfies that 
\begin{equation}
h_{k+1}-h_{k}\leq \frac{(1-\nu)\alpha_{k+1}}{h_{k+1}^{\frac{\nu}{1-\nu}}}+\gamma_{k+1},
\end{equation}
with $h_0=0$.
Then for $k\geq1$, we have
\begin{equation}
h_{k}\leq (\sum_{i=1}^{k}\alpha_{i})^{1-\nu}+\sum_{i=1}^{k}\gamma_{i}.
\end{equation}
\end{lemma}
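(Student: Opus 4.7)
Let $S_k := \sum_{i=1}^k \alpha_i$ and $T_k := \sum_{i=1}^k \gamma_i$. The key observation is that the additive $\gamma_{k+1}$ term in the recurrence can be absorbed by the substitution $u_k := h_k - T_k$, which satisfies $u_0 = 0$ and reduces the target bound $h_k \leq S_k^{1-\nu} + T_k$ to the cleaner inequality $u_k \leq S_k^{1-\nu}$. I will prove this cleaner claim by induction on $k$ and then add $T_k$ back at the end.

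Under this substitution, the hypothesis becomes $u_{k+1} - u_k \leq (1-\nu)\alpha_{k+1}/h_{k+1}^{\nu/(1-\nu)}$, and since $T_{k+1} \geq 0$ gives $h_{k+1} \geq u_{k+1}$, in the main case $u_{k+1} > u_k \geq 0$ we may replace the denominator to obtain
\[
u_{k+1}^{\nu/(1-\nu)}\,(u_{k+1} - u_k) \leq (1-\nu)\,\alpha_{k+1}.
\]
The second key step is to convert this pointwise increment bound into a telescoping one for the powers $u_k^{1/(1-\nu)}$. Since $1/(1-\nu) \geq 1$ for $\nu \in [0,1)$, the function $t \mapsto t^{1/(1-\nu)}$ is convex on $[0,\infty)$, which yields the elementary inequality $b^{1/(1-\nu)} - a^{1/(1-\nu)} \leq \tfrac{1}{1-\nu}\,b^{\nu/(1-\nu)}(b - a)$ for $b \geq a \geq 0$. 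Applying it with $b = u_{k+1}$, $a = u_k$ and combining with the displayed bound produces $u_{k+1}^{1/(1-\nu)} - u_k^{1/(1-\nu)} \leq \alpha_{k+1}$. Telescoping from $u_0 = 0$ yields $u_k^{1/(1-\nu)} \leq S_k$, i.e., $u_k \leq S_k^{1-\nu}$, and adding $T_k$ back delivers the lemma.

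The main obstacle is rigorously covering the degenerate cases in which the convexity step above does not directly apply: (i) $u_{k+1} \leq u_k$, which is immediate from the inductive hypothesis since $u_{k+1} \leq u_k \leq S_k^{1-\nu} \leq S_{k+1}^{1-\nu}$; (ii) $u_k < 0 \leq u_{k+1}$, where $-u_k > 0$ gives $u_{k+1} \leq u_{k+1} - u_k$ so the rearranged recurrence produces $u_{k+1}^{1/(1-\nu)} \leq (1-\nu)\alpha_{k+1} \leq S_{k+1}$; and (iii) $u_{k+1} < 0$, in which case the target bound is trivially true. These three cases, together with the main convexity argument, exhaust all possibilities in the inductive step and complete the induction.
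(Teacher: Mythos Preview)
Your argument is correct. Both your proof and the paper's proceed by induction on $k$ and ultimately rest on the same power-function convexity, but the packaging differs. The paper keeps the $\gamma_i$ terms throughout and works with the auxiliary function $\Gamma_{k+1}(x)=x-(1-\nu)\alpha_{k+1}x^{-\nu/(1-\nu)}$: from the recurrence it gets $\Gamma_{k+1}(h_{k+1})\le S_k^{1-\nu}+T_{k+1}$, then verifies $\Gamma_{k+1}(S_{k+1}^{1-\nu}+T_{k+1})\ge S_k^{1-\nu}+T_{k+1}$ via concavity of $t\mapsto t^{1-\nu}$, and concludes by monotonicity of $\Gamma_{k+1}$. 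Your substitution $u_k=h_k-T_k$ strips away the $\gamma$-terms up front and then bounds the increments of $u_k^{1/(1-\nu)}$ directly using convexity of $t\mapsto t^{1/(1-\nu)}$, which is the dual form of the same inequality. Your route is a bit cleaner because the $\gamma$-bookkeeping disappears early, at the price of having to handle the sign cases $(u_k<0$, $u_{k+1}<0)$ explicitly; the paper's monotone-$\Gamma$ framing avoids those case splits but carries the $T_{k+1}$ term inside every estimate. One minor presentational point: your phrase ``telescoping from $u_0=0$'' is slightly misleading, since in the degenerate cases the increment bound $u_{k+1}^{1/(1-\nu)}-u_k^{1/(1-\nu)}\le\alpha_{k+1}$ is not what you actually establish; what you really do in the main case is combine that single-step bound with the inductive hypothesis $u_k^{1/(1-\nu)}\le S_k$, which is exactly right.
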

This auxiliary result has been used in \citet[Lemma E.9]{rodomanov2024universal}. We give a proof for completeness.

\begin{proof}
We prove it by induction.

For $k=0$, we have $h_0=0\leq0$. We assume that the result holds for some $k\geq0$, then
\begin{equation}\nonumber
\begin{aligned}
h_{k+1}-\frac{(1-\nu)\alpha_{k+1}}{h_{k+1}^{\frac{\nu}{1-\nu}}}\leq& \gamma_{k+1}+h_k
\leq \gamma_{k+1}+(\sum_{i=1}^{k}\alpha_{i})^{1-\nu}+\sum_{i=1}^{k}\gamma_{i}
\leq (\sum_{i=1}^{k}\alpha_{i})^{1-\nu}+\sum_{i=1}^{k+1}\gamma_{i}.
\end{aligned}
\end{equation}
Define $\Gamma_{k+1}(x):=x-\frac{(1-\nu)\alpha_{k+1}}{x^{\frac{\nu}{1-\nu}}}$. It is easy to verify that $\Gamma_{k+1}(x)$ is increasing in $x$. Hence, it suffices to show
\begin{equation}\nonumber
(\sum_{i=1}^{k}\alpha_{i})^{1-\nu}+\sum_{i=1}^{k+1}\gamma_{i}\leq\Gamma_{k+1}((\sum_{i=1}^{k+1}\alpha_{i})^{1-\nu}+\sum_{i=1}^{k+1}\gamma_{i}),
\end{equation}
which means
\begin{equation}\label{proof:lemma 9 ineq 1}
(\sum_{i=1}^{k}\alpha_{i})^{1-\nu}+\sum_{i=1}^{k+1}\gamma_{i}\leq(\sum_{i=1}^{k+1}\alpha_{i})^{1-\nu}+\sum_{i=1}^{k+1}\gamma_{i}-(1-\nu)\alpha_{k+1}((\sum_{i=1}^{k+1}\alpha_{i})^{1-\nu}+\sum_{i=1}^{k+1}\gamma_{i})^{\frac{-\nu}{1-\nu}}.
\end{equation}

Rearranging (\ref{proof:lemma 9 ineq 1}) gives us
\[
(1-\nu)\alpha_{k+1}((\sum_{i=1}^{k+1}\alpha_{i})^{1-\nu}+\sum_{i=1}^{k+1}\gamma_{i})^{\frac{-\nu}{1-\nu}}\leq(\sum_{i=1}^{k+1}\alpha_{i})^{1-\nu}-(\sum_{i=1}^{k}\alpha_{i})^{1-\nu},
\]
which is implied by
\begin{equation}\label{proof:lemma 9 ineq 2}
(1-\nu)\alpha_{k+1}\leq((\sum_{i=1}^{k+1}\alpha_{i})^{1-\nu}-(\sum_{i=1}^{k}\alpha_{i})^{1-\nu})(\sum_{i=1}^{k+1}\alpha_{i})^{\nu}.
\end{equation}

The inequality (\ref{proof:lemma 9 ineq 2}) is valid since
\begin{equation}\nonumber
\begin{aligned}
&((\sum_{i=1}^{k+1}\alpha_{i})^{1-\nu}-(\sum_{i=1}^{k}\alpha_{i})^{1-\nu})(\sum_{i=1}^{k+1}\alpha_{i})^{\nu}\\
\geq&((\sum_{i=1}^{k+1}\alpha_{i})^{1-\nu}-(\sum_{i=1}^{k+1}\alpha_{i})^{1-\nu}+(1-\nu)(\sum_{i=1}^{k+1}\alpha_i)^{-\nu}a_{k+1})(\sum_{i=1}^{k+1}\alpha_{i})^{\nu}\\
=&((1-\nu)(\sum_{i=1}^{k+1}\alpha_i)^{-\nu}a_{k+1})(\sum_{i=1}^{k+1}\alpha_{i})^{\nu}\\
=&(1-\nu)a_{k+1},
\end{aligned}
\end{equation}
where the first inequality is due to the first-order condition of the concave function $(\cdot)^{1-\nu}, \nu\in [0, 1]$.
\end{proof}

Next, we provide the upper bound of the expectation of $\beta_k$.

\begin{lemma}\label{lemma:the bound of beta 2}
Suppose the Assumption%
~\ref{boundedness} 
holds and $f(\cdot)$ is locally \holder{} in $\Bcal_{3D_0}(x^*)$. In Algorithm~\ref{alg:agda lsfm}, it holds that
\begin{equation}
\mathbb{E}[\beta_k]\leq2^{\frac{9+9\nu}{2}}\tilde{M}_\nu D^\nu k^{\frac{3-3\nu}{2}}+2^5k^{\frac{3}{2}}\sigma.
\end{equation}
\end{lemma}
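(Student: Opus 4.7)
The strategy mirrors the deterministic proof of Lemma~\ref{lemma:the bound of beta 3} but replaces the line-search argument by the balance equation~\eqref{balance equation} and handles the additional stochastic noise via the two auxiliary lemmas (\ref{lemma:auxiliary result 1} and~\ref{lemma:auxiliary result 2}). I start from~\eqref{balance equation} and decompose each stochastic gradient as $\tilde{\nabla}f(z) = \nabla f(z) + e_z$ with $\mathbb{E}[\|e_z\|_*^2] \leq \sigma^2$. Writing $r = \|y^{k+1} - x^{k+1}\|$ and $u = 64\tau_k^2 A_{k+1}$, and using Cauchy-Schwarz together with the local H\"older smoothness constant $\tilde{M}_\nu$ on $\dom g$ (of diameter $D$), I arrive at the pointwise inequality
\[
\frac{(\beta_{k+1} - \beta_k)\bar{r}_k^2}{2A_{k+1}} \leq \tilde{M}_\nu r^{1+\nu} + \|e_y - e_x\|_* \, r - \frac{\beta_{k+1}}{u} r^2.
\]

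Next I split the quadratic penalty into two equal halves, apply Lemma~\ref{lemma:auxiliary result 1} to the deterministic pair $(\tilde{M}_\nu r^{1+\nu},\tfrac{\beta_{k+1}}{2u}r^2)$ and Young's inequality to the stochastic pair $(\|e_y-e_x\|_*\,r,\tfrac{\beta_{k+1}}{2u}r^2)$. This produces a bound of the form $C_\nu \tilde{M}_\nu^{2/(1-\nu)}(2u)^{(1+\nu)/(1-\nu)}\beta_{k+1}^{-(1+\nu)/(1-\nu)} + u\|e_y-e_x\|_*^2/(2\beta_{k+1})$, where $C_\nu = \tfrac{1+\nu}{2}\bigl(\tfrac{1-\nu}{2}\bigr)^{(1+\nu)/(1-\nu)}$. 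To cast this into the form required by Lemma~\ref{lemma:auxiliary result 2}, I multiply both sides by $2\beta_{k+1}$, use the elementary bound $\beta_{k+1}^2 - \beta_k^2 \leq 2\beta_{k+1}(\beta_{k+1}-\beta_k)$, and set $h_k := \beta_k^2$ (so $h_0 = 0$ from the initialization $\beta_0 = 0$). Substituting the deterministic estimates $\tau_k^2 A_{k+1} = a_{k+1}^2/A_{k+1} \leq 4\bar{r}_k$ (hence $u \leq 256\bar{r}_k$), $A_{k+1} = \bigl(\sum_{i=0}^k \bar{r}_i^{1/2}\bigr)^2 \leq (k+1)^2 \bar{r}_k$, and $\bar{r}_k \leq D$, I obtain the recursion
\[
h_{k+1} - h_k \leq \frac{(1-\nu)\alpha_{k+1}}{h_{k+1}^{\nu/(1-\nu)}} + \gamma_{k+1},
\]
with deterministic bounds $(1-\nu)\alpha_{k+1} \leq c_1 \tilde{M}_\nu^{2/(1-\nu)} D^{2\nu/(1-\nu)}(k+1)^2$ and pointwise $\gamma_{k+1} \leq c_2(k+1)^2 \|e_y - e_x\|_*^2$ for explicit constants $c_1, c_2$.

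Finally I apply Lemma~\ref{lemma:auxiliary result 2} pointwise to get $\beta_k^2 \leq (\sum_{i=1}^k \alpha_i)^{1-\nu} + \sum_{i=1}^k \gamma_i$. Using $\sqrt{a+b} \leq \sqrt{a}+\sqrt{b}$, then taking expectations and invoking Jensen's inequality on the concave maps $(\cdot)^{(1-\nu)/2}$ and $\sqrt{\cdot}$, together with $\mathbb{E}[\|e_y - e_x\|_*^2] \leq 4\sigma^2$ and $\sum_{i=1}^k i^2 \leq k^3$, yields the claimed bound $\mathbb{E}[\beta_k] \leq 2^{(9+9\nu)/2}\tilde{M}_\nu D^\nu k^{(3-3\nu)/2} + 2^5 k^{3/2}\sigma$. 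The main obstacle will be bookkeeping the numerical constants precisely to match the prefactors $2^{(9+9\nu)/2}$ and $2^5$, since the $\nu$-dependent exponent $(1+\nu)/(1-\nu)$ compounds through every factor of $2$ introduced by the halving of the quadratic penalty, Young's inequality, and Lemma~\ref{lemma:auxiliary result 1}. A secondary subtlety is that $\alpha_{k+1}$ naturally contains $\bar{r}_k^{(3\nu-1)/(1-\nu)}$, whose sign depends on $\nu$; rewriting the product as $(A_{k+1}/\bar{r}_k)\,\bar{r}_k^{2\nu/(1-\nu)} \leq (k+1)^2 D^{2\nu/(1-\nu)}$ resolves the issue cleanly, because $2\nu/(1-\nu) \geq 0$ for $\nu \in [0,1)$.
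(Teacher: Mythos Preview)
Your plan coincides with the paper's argument for $\nu\in[0,1)$: the paper also bounds the bracket in~\eqref{balance equation} by $\tilde M_\nu r^{1+\nu}+(\Delta_{k+1}^x+\Delta_{k+1}^y)\,r-\tfrac{\beta_{k+1}}{64\tau_k^2A_{k+1}}r^2$ via H\"older smoothness and Cauchy--Schwarz, splits the quadratic, applies Lemma~\ref{lemma:auxiliary result 1} (with exponent $\nu=0$ for the two noise terms, which is exactly your Young step), multiplies through by $\beta_{k+1}$, invokes Lemma~\ref{lemma:auxiliary result 2} on $h_k=\beta_k^2$, and finishes with $\sqrt{a+b}\leq\sqrt a+\sqrt b$ and Jensen. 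Your remark about combining $A_{k+1}/\bar r_k^2$ with $\bar r_k^{(1+\nu)/(1-\nu)}$ to obtain the nonnegative power $\bar r_k^{2\nu/(1-\nu)}$ is also precisely how the paper proceeds.

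The one gap is the case $\nu=1$: Lemmas~\ref{lemma:auxiliary result 1} and~\ref{lemma:auxiliary result 2} are stated only for $\nu\in[0,1)$, and the exponent $(1+\nu)/(1-\nu)$ that runs through your plan is singular there. The paper handles $\nu=1$ by a separate, more elementary induction on $\beta_k^2$: since the H\"older term $\tilde M_\nu r^2$ is already quadratic, once $\beta_{k+1}\geq 2^9\tilde M_\nu D$ it is dominated by the penalty $\tfrac{\beta_{k+1}}{2^8\bar r_k}r^2$, and Young's inequality on the remaining noise contribution yields $\beta_{k+1}^2-\beta_k^2\leq 2^{10}(k+1)^2(\Delta_{k+1}^x+\Delta_{k+1}^y)^2$, which telescopes directly. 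You need to add this case split.
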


\begin{proof}

We denote $\|\nabla f(x^{k+1})-\tilde{\nabla}f(x^{k+1})\|_*=\Delta_{k+1}^x$ and $\|\nabla f(y^{k+1})-\tilde{\nabla}f(y^{k+1})\|_*=\Delta_{k+1}^y.$
The expectations of $(\Delta_k^x)^2$ and $(\Delta_k^y)^2$ satisfy 
\begin{equation}
\begin{aligned}
\mathbb{E}[(\Delta_{k}^x)^2]= \mathbb{E}[\|\nabla f(x^{k+1})-\tilde{\nabla}f(x^{k+1})\|_*^2]\leq\sigma^2,\\
\mathbb{E}[(\Delta_{k}^y)^2]= \mathbb{E}[\|\nabla f(y^{k+1})-\tilde{\nabla}f(y^{k+1})\|_*^2]\leq\sigma^2.
\end{aligned}
\end{equation}

From the balance equation, we have

\begin{equation}\nonumber
\begin{aligned}
\frac{\tau_{k}\eta_{k}}{2}=&[\langle\tilde{\nabla} f(y^{k+1})- \tilde{\nabla} f(x^{k+1}), y^{k+1} - x^{k+1}\rangle
-\frac{\beta_{k+1}}{64\tau_{k}^2A_{k+1}}\|y^{k+1}-x^{k+1}\|^2]_+\\
\leq&[\langle \nabla f(y^{k+1}) -\nabla f(x^{k+1}), y^{k+1} - x^{k+1}\rangle+\langle\nabla f(x^{k+1})-\tilde{\nabla} f(x^{k+1}), y^{k+1}-x^{k+1}\rangle\\
&+\langle\tilde\nabla f(y^{k+1})-\nabla f(y^{k+1}), y^{k+1}-x^{k+1}\rangle-\frac{\beta_{k+1}}{64\tau_{k}^2A_{k+1}}\|y^{k+1}-x^{k+1}\|^2]_+, \\
\leq&[\tilde{M}_{\nu}\|y^{k+1} - x^{k+1}\|^{1+\nu}+(\Delta_{k+1}^y+\Delta_{k+1}^x)\|y^{k+1}-x^{k+1}\|-\frac{\beta_{k+1}}{64\tau_{k}^2A_{k+1}}\|y^{k+1}-x^{k+1}\|^2]_+.\\
\end{aligned}
\end{equation}

Note that $[\cdot]_+$ is a monotonically increasing function. The first inequality uses the convexity of $f(\cdot)$ and the second inequality applies the locally H\"older smoothness and Cauchy-Schwarz inequality.

\textbf{Case 1: $\nu = 1$}

\begin{equation}\nonumber
\begin{aligned}
\frac{\beta_{k+1}\bar{r}_k^2-\beta_k\bar{r}_{k-1}^2}{2A_{k+1}}
\leq&[(\tilde{M}_{\nu}-\frac{\beta_{k+1}}{64\tau_{k}^2A_{k+1}})\|y^{k+1} - x^{k+1}\|^{2}+(\Delta_{k+1}^y+\Delta_{k+1}^x)\|y^{k+1}-x^{k+1}\|]_+\\
\beta_{k+1}\bar{r}_k^2-\beta_k\bar{r}_{k-1}^2
\leq&2A_{k+1}[(\tilde{M}_{\nu}-\frac{\beta_{k+1}}{64\tau_{k}^2A_{k+1}})\|y^{k+1} - x^{k+1}\|^{2}+(\Delta_{k+1}^y+\Delta_{k+1}^x)\|y^{k+1}-x^{k+1}\|]_+\\
\beta_{k+1}\bar{r}_k^2-\beta_k\bar{r}_{k-1}^2
\leq&2A_{k+1}[(\tilde{M}_{\nu}-\frac{\beta_{k+1}}{2^8\bar{r}_k})\|y^{k+1} - x^{k+1}\|^{2}+(\Delta_{k+1}^y+\Delta_{k+1}^x)\|y^{k+1}-x^{k+1}\|]_+\\
\beta_{k+1}\bar{r}_k^2-\beta_k\bar{r}_{k-1}^2
\leq&2(k+1)^2[\bar{r}_k(\tilde{M}_{\nu}-\frac{\beta_{k+1}}{2^8\bar{r}_k})\|y^{k+1} - x^{k+1}\|^{2}+\bar{r}_k(\Delta_{k+1}^y+\Delta_{k+1}^x)\|y^{k+1}-x^{k+1}\|]_+\\
\beta_{k+1}-\beta_k
\leq&\frac{2(k+1)^2}{\bar{r}_k}[(\tilde{M}_{\nu}-\frac{\beta_{k+1}}{2^8\bar{r}_k})\|y^{k+1} - x^{k+1}\|^{2}+(\Delta_{k+1}^y+\Delta_{k+1}^x)\|y^{k+1}-x^{k+1}\|]_+,
\end{aligned}
\end{equation}
where we use $A_{k+1}\leq(k+1)^2\bar{r}_k$ and $D\geq\bar{r}_{k+1}\geq\bar{r}_k$.

We prove that
\begin{equation}\label{proof:lemma 7 ineq 1}
\beta_k^2\leq \sum_{i=1}^{k}2^{10}i^2((\Delta_{i}^x)^2+(\Delta_i^y)^2)+2^{18}\tilde{M}_{\nu}^2D^2.
\end{equation}
Since $\beta_0^2=0< 2^{18}\tilde{M}_{\nu}^2D^2$, we prove it by induction. Define $k^* = \max\{i|\beta_i\leq2^{9}\tilde{M_\nu}D\}\geq0$, so $\forall k\leq k^*$ satisfies the inequality \ref{proof:lemma 7 ineq 1}. We assume it is valid for certain $k\geq k^*$, then
\begin{equation}\nonumber
\begin{aligned}
\beta_{k+1}-\beta_k
\leq&\frac{1}{\bar{r}_k}[2(k+1)^2(\frac{\beta_{k+1}}{2^{9}D}-\frac{\beta_{k+1}}{2^8\bar{r}_k})\|y^{k+1} - x^{k+1}\|^{2}+2(k+1)^2(\Delta_{k+1}^x+\Delta_{k+1}^y)\|y^{k+1}-x^{k+1}\|]_+\\
\leq&\frac{1}{\bar{r}_k}[-2(k+1)^2\frac{\beta_{k+1}}{2^9\bar{r}_k}\|y^{k+1} - x^{k+1}\|^{2}+2(k+1)^2(\Delta_{k+1}^x+\Delta_{k+1}^y)\|y^{k+1}-x^{k+1}\|]_+\\
\leq&\frac{1}{\bar{r}_k}2(k+1)^2\frac{2^9\bar{r}_k(\Delta_{k+1}^x)^2}{2\beta_{k+1}}+\frac{1}{\bar{r}_k}2(k+1)^2\frac{2^9\bar{r}_k(\Delta_{k+1}^y)^2}{2\beta_{k+1}}\\
=&2^9(k+1)^2\frac{(\Delta_{k+1}^x)^2+(\Delta_{k+1}^{y})^2}{\beta_{k+1}}.
\end{aligned}
\end{equation}

Then
\begin{equation}\nonumber
\begin{aligned}
\frac{1}{2}(\beta_{k+1}^2-\beta_k^2&)\leq\beta_{k+1}(\beta_{k+1}-\beta_k)\leq2^9(k+1)^2(\Delta_{k+1}^y+\Delta_{k+1}^x)^2\\
\beta_{k+1}^2\leq&2^{10}(k+1)^2(\Delta_{k+1}^y+\Delta_{k+1}^x)^2+\beta_k^2\\
\beta_{k+1}^2\leq&2^{10}(k+1)^2(\Delta_{k+1}^y+\Delta_{k+1}^x)^2+\beta_k^2\\
\beta_{k+1}^2\leq&2^{10}(k+1)^2(\Delta_{k+1}^y+\Delta_{k+1}^x)^2+\sum_{i=1}^{k}2^{10}i^2(\Delta_{i}^y+\Delta_{i}^x)^2+2^{18}\tilde{M}_{\nu}^2D^2\\
\beta_{k+1}^2\leq&\sum_{i=1}^{k+1}2^{10}i^2(\Delta_{i}^y+\Delta_{i}^x)^2+2^{18}\tilde{M}_{\nu}^2D^2,
\end{aligned}
\end{equation}
where we use $\beta_k^2\leq\sum_{i=1}^{k}2^{10}i^2\Delta_{i}^2+2^{18}\tilde{M}_{\nu}^2D^2$ by induction.
Applying the inequality $a^2+b^2\leq(a+b)^2$ where $a, b\geq 0$, we obtain
\begin{equation}\nonumber
\begin{aligned}
\beta_{k+1}\leq&2^5(\sum_{i=1}^{k+1}i^2(\Delta_{i}^x+\Delta_{i}^y)^2)^{\frac{1}{2}}+2^9\tilde{M}_{\nu}D.
\end{aligned}
\end{equation}
We take the expectation of $\beta_k$:
\begin{equation}\nonumber
\begin{aligned}
\mathbb{E}[\beta_{k}]\leq&\mathbb{E}[2^5(\sum_{i=1}^{k}i^2(\Delta_{i}^x+\Delta_{i}^y)^2))^{\frac{1}{2}}+2^{9}\tilde{M}_{\nu}D]\\
\leq&2^5(\sum_{i=1}^{k}i^2(\mathbb{E}[(\Delta_{i}^x)^2]+\mathbb{E}[(\Delta_{i}^y)^2])^{\frac{1}{2}}+2^{9}\tilde{M}_{\nu}D\\
\leq&2^5\sum_{i=1}^{k}\sqrt{2}i^2\sigma+2^9\tilde{M}_{\nu}D\\
\leq&2^\frac{11}{2}k^\frac{3}{2}\sigma+2^9\tilde{M}_{\nu}D,
\end{aligned}
\end{equation}
where we use the Jensen's inequality that $\mathbb{E}[X^{\frac{1}{2}}]\leq (\mathbb{E}[X])^{\frac{1}{2}}$ and estimate $\sum_{i=1}^{k}i^2\leq k^{3}$ roughly.

\textbf{Case 2: $\nu \neq 1$}
Applying Lemma \ref{lemma:auxiliary result 1} for three times with $r = \|y^{k+1}-x^{k+1}\|$, $\alpha=\tilde{M}_{\nu}$, $\beta=\frac{\beta_{k+1}}{128\tau_{k}^2A_{k+1}}$; $r' = \|y^{k+1}-x^{k+1}\|$, $\alpha'=\Delta_{k+1}^x$, $\beta'=\frac{\beta_{k+1}}{256\tau_{k}^2A_{k+1}}$; $r'' = \|y^{k+1}-x^{k+1}\|$, $\alpha''=\Delta_{k+1}^y$, $\beta''=\frac{\beta_{k+1}}{256\tau_{k}^2A_{k+1}}$, it holds that
\begin{equation}\nonumber
\begin{aligned}
\frac{\tau_{k}\eta_{k}}{2}
\leq&[\frac{1+\nu}{2}(\frac{1-\nu}{2})^{\frac{1+\nu}{1-\nu}}\tilde{M}_\nu^{\frac{2}{1-\nu}}(\frac{128\tau_{k}^2A_{k+1}}{\beta_{k+1}})^\frac{1+\nu}{1-\nu}+\frac{1}{2}(\Delta_{k+1}^y+\Delta_{k+1}^x)^2\frac{256\tau_{k}^2A_{k+1}}{\beta_{k+1}}]_+\\
=&\frac{1+\nu}{2}(\frac{1-\nu}{2})^{\frac{1+\nu}{1-\nu}}\tilde{M}_\nu^{\frac{2}{1-\nu}}(\frac{128\tau_{k}^2A_{k+1}}{\beta_{k+1}})^\frac{1+\nu}{1-\nu}+(\Delta_{k+1}^y+\Delta_{k+1}^x)^2\frac{128\tau_{k}^2A_{k+1}}{\beta_{k+1}}, \\
\end{aligned}
\end{equation}
where the last equality is obviously nonnegative.

Similar to the proof of Lemma \ref{lemma:the bound of beta 3}, we  have 
\[a_{k+1}^2\leq4\bar{r}_kA_{k+1},
\]
which implies $\tau_k^2A_{k+1}\leq4\bar{r}_k$. Consequently, 
\begin{equation}\nonumber
\begin{aligned}
\frac{\tau_{k}\eta_{k}}{2}
\leq&\frac{1+\nu}{2}\tilde{M}_\nu^{\frac{2}{1-\nu}}(\frac{(1-\nu)2^8\bar{r}_k}{\beta_{k+1}})^\frac{1+\nu}{1-\nu}+(\Delta_{k+1}^y+\Delta_{k+1}^x)^2\frac{2^9\bar{r}_k}{\beta_{k+1}}\\
\beta_{k+1}\bar{r}_k^2-\beta_k\bar{r}_{k-1}^2\leq&A_{k+1}\tilde{M}_\nu^{\frac{2}{1-\nu}}(\frac{(1-\nu)2^8\bar{r}_k}{\beta_{k+1}})^\frac{1+\nu}{1-\nu}+A_{k+1}(\Delta_{k+1}^y+\Delta_{k+1}^x)^2\frac{2^{10}\bar{r}_k}{\beta_{k+1}}\\
\beta_{k+1}\bar{r}_k^2-\beta_k\bar{r}_{k}^2\leq&(k+1)^2\bar{r}_k\tilde{M}_\nu^{\frac{2}{1-\nu}}(\frac{(1-\nu)2^8\bar{r}_k}{\beta_{k+1}})^\frac{1+\nu}{1-\nu}+(k+1)^2\bar{r}_k(\Delta_{k+1}^y+\Delta_{k+1}^x)^2\frac{2^{10}\bar{r}_k}{\beta_{k+1}}\\
\beta_{k+1}-\beta_k\leq&(k+1)^2\tilde{M}_\nu^{\frac{2}{1-\nu}}(\frac{(1-\nu)2^8}{\beta_{k+1}})^\frac{1+\nu}{1-\nu}\bar{r}_k^{\frac{2\nu}{1-\nu}}+(k+1)^2(\Delta_{k+1}^y+\Delta_{k+1}^x)^2\frac{2^{10}}{\beta_{k+1}},
\end{aligned}
\end{equation}
where we use $A_{k+1}\leq(k+1)^2\bar{r}_k$ and $\bar{r}_k\geq\bar{r}_{k-1}$.
It then follows that
\begin{equation}\nonumber
\begin{aligned}
\beta_{k+1}(\beta_{k+1}-\beta_k)\leq&(k+1)^2\tilde{M}_\nu^{\frac{2}{1-\nu}}\frac{((1-\nu)2^8)^\frac{1+\nu}{1-\nu}}{\beta_{k+1}^{\frac{2\nu}{1-\nu}}}\bar{r}_k^{\frac{2\nu}{1-\nu}}+2^{10}(k+1)^2\Delta_{k+1}^2\\
\beta_{k+1}^2-\beta_k^2\leq&2(k+1)^2\tilde{M}_\nu^{\frac{2}{1-\nu}}\frac{((1-\nu)2^8)^\frac{1+\nu}{1-\nu}}{\beta_{k+1}^{\frac{2\nu}{1-\nu}}}\bar{r}_k^{\frac{2\nu}{1-\nu}}+2^{11}(k+1)^2\Delta_{k+1}^2.
\end{aligned}
\end{equation}

We apply Lemma \ref{lemma:auxiliary result 2} with $h_k=\beta_k^2$, $\alpha_k=2\times(2^8)^\frac{1+\nu}{1-\nu}k^2\tilde{M}_\nu^{\frac{2}{1-\nu}}(1-\nu)^{\frac{2\nu}{1-\nu}}\bar{r}_{k-1}^{\frac{2\nu}{1-\nu}}$ and $\gamma_k=2^{11}{(k+1)^2}\Delta_{k}^2$, it holds that
\begin{equation}\nonumber
\begin{aligned}
\beta_{k}^2\leq&(\sum_{i=1}^{k}2\times(2^8)^\frac{1+\nu}{1-\nu}i^2\tilde{M}_\nu^{\frac{2}{1-\nu}}(1-\nu)^{\frac{2\nu}{1-\nu}}\bar{r}_{i-1}^{\frac{2\nu}{1-\nu}})^{1-\nu}+\sum_{i=1}^{k}2^{11}i^2\Delta_{k}^2\\
\leq&(2^{9+7\nu})\tilde{M}_\nu^{{2}}(1-\nu)^{2\nu}\bar{r}_{k-1}^{2\nu}(\sum_{i=1}^{k}i^2)^{1-\nu}+2^{11}\sum_{i=1}^{k}i^2\Delta_{i}^2\\
\leq&(2^{9+7\nu})\tilde{M}_\nu^{{2}}(1-\nu)^{2\nu}D^{2\nu}k^{3-3\nu}+2^{11}\sum_{i=1}^{k}i^2\Delta_i^2.\\
\end{aligned}
\end{equation}
Here we estimate $\sum_{i=1}^{k}i^2\leq k^{3}$ roughly.
Applying the inequality $a^2+b^2\leq(a+b)^2$ where $a, b\geq 0$, we obtain
\begin{equation}\nonumber
\begin{aligned}
\beta_{k}
\leq&(2^{\frac{9+7\nu}{2}})\tilde{M}_\nu(1-\nu)^{\nu}D^{\nu}k^{\frac{3-3\nu}{2}}+2^{\frac{11}{2}}(\sum_{i=1}^{k}i^2\Delta_i^2)^{\frac{1}{2}}.\\
\end{aligned}
\end{equation}

Finally, we take the expectation of $\beta_k$:
\begin{equation}\nonumber
\begin{aligned}
\mathbb{E}[\beta_{k}]
\leq&\mathbb{E}[(2^{\frac{9+7\nu}{2}})\tilde{M}_\nu(1-\nu)^{\nu}D^{\nu}k^{\frac{3-3\nu}{2}}+2^{\frac{11}{2}}(\sum_{i=1}^{k}i^2\Delta_i^2)^\frac{1}{2}]\\
\leq&(2^{\frac{9+7\nu}{2}})\tilde{M}_\nu(1-\nu)^{\nu}D^{\nu}k^{\frac{3-3\nu}{2}}+2^{\frac{11}{2}}(\sum_{i=1}^{k}i^2\mathbb{E}[\Delta_i^2])^\frac{1}{2}\\
\leq&(2^{\frac{9+7\nu}{2}})\tilde{M}_\nu(1-\nu)^{\nu}D^{\nu}k^{\frac{3-3\nu}{2}}+2^{\frac{11}{2}}\sigma(\sum_{i=1}^{k}i^2)^{\frac{1}{2}}\\
\leq&(2^{\frac{9+7\nu}{2}})\tilde{M}_\nu(1-\nu)^{\nu}D^{\nu}k^{\frac{3-3\nu}{2}}+2^{\frac{11}{2}} k^\frac{3}{2}\sigma, \\
\end{aligned}
\end{equation}
where we use Jensen's inequality again.

In conclusion, we have $\mathbb{E}[\beta_k]\leq2^{\frac{9+9\nu}{2}}\tilde{M}_\nu D^\nu k^{\frac{3-3\nu}{2}}+2^{\frac{11}{2}}k^{\frac{3}{2}}\sigma$.
\end{proof}

We are now ready to derive the convergence rate of Algorithm~\ref{alg:agda lsfm}.
\subsection{Proof of Theorem~\ref{theorem:stochastic}}
\begin{proof}
In view of the balance equation (\ref{balance equation}) and the fact $[x]_+\geq x$, it holds that
\begin{equation}\nonumber
\begin{aligned}
0\leq& \langle\tilde{\nabla} f(x^{k+1})-\tilde{\nabla} f(y^{k+1}) , y^{k+1} - x^{k+1}\rangle+\frac{\beta_{k+1}}{64\tau_k^2A_{k+1}}\|y^{k+1}-x^{k+1}\|^2+\frac{\tau_k\eta_k}{2}\\
\langle \nabla f(y^{k+1}),y^{k+1} - x^{k+1}\rangle\leq& \langle\tilde{\nabla} f(x^{k+1}) , y^{k+1} - x^{k+1}\rangle+\frac{\beta_{k+1}}{64\tau_k^2A_{k+1}}\|y^{k+1}-x^{k+1}\|^2+\frac{\tau_k\eta_k}{2}\\
&+\langle \nabla f(y^{k+1})-\tilde{\nabla}f(y^{k+1}),y^{k+1} - x^{k+1}\rangle\\
\end{aligned}
\end{equation}

The first order condition of convex function $f(\cdot)$ implies $f(y^{k+1})-f(x^{k+1})\leq\langle \nabla f(y^{k+1}),y^{k+1} - x^{k+1}\rangle$, thus
\begin{equation}\nonumber
\begin{aligned}
f(y^{k+1}) \leq& f(x^{k+1}) + \langle\tilde{\nabla} f(x^{k+1}), y^{k+1} - x^{k+1}\rangle+\frac{\beta_{k+1}}{64\tau_k^2A_{k+1}}\|y^{k+1}-x^{k+1}\|^2+\frac{\tau_k\eta_k}{2}\\
&+\langle \nabla f(y^{k+1})-\tilde{\nabla}f(y^{k+1}),y^{k+1} - x^{k+1}\rangle.
\end{aligned}
\end{equation}

Because $x^{k+1} = \tau_kv^{k}+(1-\tau_k)y^k$, $y^{k+1} = \tau_k\hat{x}^{k+1}+(1-\tau_k)y^k$ and $\eta_k = \frac{\beta_{k+1}\bar{r}_k^2-\beta_{k}\bar{r}_{k-1}^2}{8a_{k+1}}$, it holds that
\begin{equation}\nonumber
\begin{aligned}
f(y^{k+1})\leq&(1-\tau_k)(f(x^{k+1})+\langle\tilde\nabla f(x^{k+1}), y^k-x^{k+1}\rangle)+\tau_k(f(x^{k+1})+\langle\tilde\nabla f(x^{k+1}), \hat{x}^{k+1}-x^{k+1}\rangle)\\
&+\frac{\beta_{k+1}}{64\tau_k^2A_{k+1}}\tau_k^2\|\hat{x}^{k+1}- v^{k}\|^2 +\frac{\beta_{k+1}-\beta_{k}}{16A_{k+1}}\bar{r}_{k}^2\\
&+\langle \nabla f(y^{k+1})-\tilde{\nabla}f(y^{k+1}),y^{k+1} - x^{k+1}\rangle\\
\leq&(1-\tau_k)f(y^k)+\tau_k(f(x^{k+1})+\langle\tilde\nabla f(x^{k+1}), \hat{x}^{k+1}-x^{k+1}\rangle)\\
&+\frac{\beta_{k}}{64A_{k+1}}\|\hat{x}^{k+1}- v^{k}\|^2 +\frac{\beta_{k+1}-\beta_{k}}{16A_{k+1}}\bar{r}_{k}^2\\
&+(1-\tau_k)\langle\tilde{\nabla}f(x^{k+1})-\nabla f(x^{k+1}), y^{k}-x^{k+1}\rangle+\frac{\beta_{k+1}-\beta_k}{64A_{k+1}}\|\hat{x}^{k+1}- v^{k}\|^2\\
&+\langle \nabla f(y^{k+1})-\tilde{\nabla}f(y^{k+1}),y^{k+1} - x^{k+1}\rangle.
\end{aligned}
\end{equation}

Since $a_{k+1}\langle\tilde\nabla f(x^{k+1}), \hat{x}^{k+1}\rangle+\frac{\beta_k}{2}\|\hat{x}^{k+1}- v^k\|^2\leq a_{k+1}\langle\tilde\nabla f(x^{k+1}), v^{k+1}\rangle+\frac{\beta_k}{2}\|v^{k+1}- v^k\|^2$,
we have
\begin{equation}\nonumber
\begin{aligned}
f(y^{k+1})\leq&(1-\tau_k)f(y^k)+\tau_k(f(x^{k+1})+\langle\tilde\nabla f(x^{k+1}), v^{k+1}-x^{k+1}\rangle)\\
&+\frac{\beta_{k}}{64A_{k+1}}\|v^{k+1}- v^{k}\|^2 +\frac{\beta_{k+1}-\beta_{k}}{16A_{k+1}}\bar{r}_{k}^2\\
&+(1-\tau_k)\langle\tilde{\nabla}f(x^{k+1})-\nabla f(x^{k+1}), y^{k}-x^{k+1}\rangle+\frac{\beta_{k+1}-\beta_k}{64A_{k+1}}\|\hat{x}^{k+1}- v^{k}\|^2 \\
&+\langle \nabla f(y^{k+1})-\tilde{\nabla}f(y^{k+1}),y^{k+1} - x^{k+1}\rangle\\
\leq&(1-\tau_k)f(y^k)+\tau_k(f(x^{k+1})+\langle\tilde\nabla f(x^{k+1}), v^{k+1}-x^{k+1}\rangle)\\
&+\frac{\beta_{k}}{64A_{k+1}}\|v^{k+1}- v^{k}\|^2 +\frac{\beta_{k+1}-\beta_{k}}{16A_{k+1}}\bar{r}_{k}^2\\
&+(1-\tau_k)\langle\tilde{\nabla}f(x^{k+1})-\nabla f(x^{k+1}), y^{k}-x^{k+1}\rangle+\frac{\beta_{k+1}-\beta_k}{16A_{k+1}}\bar{r}_{k+1}^2\\
&+\langle \nabla f(y^{k+1})-\tilde{\nabla}f(y^{k+1}),y^{k+1} - x^{k+1}\rangle.
\end{aligned}
\end{equation}
Here, the last inequality is due to $\|\hat{x}^{k+1}- v^{k}\|^2\leq(d_{k+1}+r_k)^2\leq4\bar{r}_{k+1}^2$.

We match the two error terms by $\frac{\beta_{k+1}\bar{r}_k^2-\beta_{k}\bar{r}_{k}^2}{16A_{k+1}}+\frac{\beta_{k+1}-\beta_k}{16A_{k+1}}\bar{r}_{k+1}^2\leq\frac{\beta_{k+1}-\beta_k}{8A_{k+1}}\bar{r}_{k+1}^2$.
Multiplying both sides by $A_{k+1}$, we have 
\begin{equation}\nonumber
\begin{aligned}
A_{k+1}f(y^{k+1})\leq&A_kf(y^k)+a_{k+1}(f(x^{k+1})+\langle\tilde\nabla f(x^{k+1}), v^{k+1}-x^{k+1}\rangle)\\
&+\frac{\beta_{k}}{64}\|v^{k+1}- v^{k}\|^2 +\frac{\beta_{k+1}-\beta_k}{8A_{k+1}}\bar{r}_{k+1}^2\\
&+A_k\langle\tilde{\nabla}f(x^{k+1})-\nabla f(x^{k+1}), y^{k}-x^{k+1}\rangle\\
&+A_{k+1}\langle \nabla f(y^{k+1})-\tilde{\nabla}f(y^{k+1}),y^{k+1} - x^{k+1}\rangle.
\end{aligned}
\end{equation}
On the other hand, it holds that 
\begin{equation}\label{proof:theorem 3 ineq 3}
g(y^{k+1})\leq(1-\tau_k)g(y^k)+\tau_kg(v^{k+1}).
\end{equation}
Combining (\ref{proof:proposition 2 ineq 2}) and (\ref{proof:theorem 3 ineq 3}), we obtain
\begin{equation}\nonumber
\begin{aligned}
A_{k+1}\psi(y^{k+1})
\leq&A_k\psi(y^k)+a_{k+1}(f(x^{k+1})+\langle\tilde\nabla f(x^{k+1}), v^{k+1}-x^{k+1}\rangle+g(v^{k+1}))\\
&+\frac{\beta_{k}}{2} \|v^{k+1}- v^{k}\|^2+\frac{\beta_{k+1}-\beta_k}{8A_{k+1}}\bar{r}_{k+1}^2\\
&+A_k\langle\tilde{\nabla}f(x^{k+1})-\nabla f(x^{k+1}), y^{k}-x^{k+1}\rangle\\
&+A_{k+1}\langle \nabla f(y^{k+1})-\tilde{\nabla}f(y^{k+1}),y^{k+1} - x^{k+1}\rangle.
\end{aligned}
\end{equation}

For $\frac{\beta_{k}}{2}\|v^{k+1}- v^{k}\|^2$, we use Lemma \ref{lemma:3 point}, then
\begin{equation}\label{proof:theorem 3 ineq 2}
\begin{aligned}
A_{k+1}\psi(y^{k+1})
\leq&A_k\psi(y^k)+a_{k+1}(f(x^{k+1})+\langle\tilde\nabla f(x^{k+1}), v^{k+1}-x^{k+1}\rangle+g(v^{k+1}))\\
&+\sum_{i=1}^{k}a_i(f(x^i)+\langle\tilde\nabla f(x^i), v^{k+1}-x^i\rangle+g(v^{k+1})) + \frac{\beta_{k}}{2} \|x^{0}- v^{k+1}\|^2 \\
&-\sum_{i=1}^{k}a_i(f(x^i)+\langle\tilde\nabla f(x^i), v^{k}-x^i\rangle+g(v^{k})) - \frac{\beta_{k}}{2} \|x^{0}- v^{k}\|^2\\
&+\frac{\beta_{k+1}-\beta_k}{8A_{k+1}}\bar{r}_{k+1}^2+A_k\langle\tilde{\nabla}f(x^{k+1})-\nabla f(x^{k+1}), y^{k}-x^{k+1}\rangle\\
&+A_{k+1}\langle \nabla f(y^{k+1})-\tilde{\nabla}f(y^{k+1}),y^{k+1} - x^{k+1}\rangle\\
\leq&A_k\psi(y^k)
+\sum_{i=1}^{k+1}a_i(f(x^i)+\langle\tilde\nabla f(x^i), v^{k+1}-x^i\rangle+g(v^{k+1})) + \frac{\beta_{k+1}}{2} \|x^{0}- v^{k+1}\|^2 \\
&-\sum_{i=1}^{k}a_i(f(x^i)+\langle\tilde\nabla f(x^i), v^{k}-x^i\rangle+g(v^{k})) \frac{\beta_{k}}{2} \|x^{0}- v^{k}\|^2
\\&+\frac{\beta_{k+1}-\beta_k}{8A_{k+1}}\bar{r}_{k+1}^2+A_k\langle\tilde{\nabla}f(x^{k+1})-\nabla f(x^{k+1}), y^{k}-x^{k+1}\rangle\\
&+A_{k+1}\langle \nabla f(y^{k+1})-\tilde{\nabla}f(y^{k+1}),y^{k+1} - x^{k+1}\rangle.
\end{aligned}
\end{equation}

We can simplify (\ref{proof:proposition 2 ineq 1}) by using the definition of $\phi_k(\cdot)$:
\begin{equation}
\begin{aligned}
A_{k+1}\psi(y^{k+1})
\leq& A_k\psi(y^k)+\phi_{k+1}(v^{k+1}) -\phi_{k}(v^{k})\\
&+\frac{\beta_{k+1}-\beta_k}{8A_{k+1}}\bar{r}_{k+1}^2+A_k\langle\tilde{\nabla}f(x^{k+1})-\nabla f(x^{k+1}), y^{k}-x^{k+1}\rangle\\
&+A_{k+1}\langle \nabla f(y^{k+1})-\tilde{\nabla}f(y^{k+1}),y^{k+1} - x^{k+1}\rangle.
\end{aligned}
\end{equation}
Applying the upper inequality recursively, it holds that
\begin{equation}\nonumber
\begin{aligned}
A_{k}\psi(y^{k})\leq&\phi_{k}(v^{k}) -\phi_{0}(v^{0})\\
&+\sum_{i=0}^{k-1}\frac{\beta_{i+1}-\beta_i}{8}\bar{r}_{i+1}^2+A_i\langle\tilde{\nabla}f(x^{i+1})-\nabla f(x^{i+1}), y^{i}-x^{i+1}\rangle\\
&+A_{i+1}\langle \nabla f(y^{i+1})-\tilde{\nabla}f(y^{i+1}),y^{i+1} - x^{i+1}\rangle\\
\leq&\phi_{k}(v^{k})+\frac{\beta_k}{8}\bar{r}_{k}^2-\frac{\beta_0}{8}\bar{r}_{1}^2+\sum_{i=0}^{k-1}A_i\langle\tilde{\nabla}f(x^{i+1})-\nabla f(x^{i+1}), y^{i}-x^{i+1}\rangle\\
&+A_{i+1}\langle \nabla f(y^{i+1})-\tilde{\nabla}f(y^{i+1}),y^{i+1} - x^{i+1}\rangle\\
\leq&\phi_{k}(v^{k})+\frac{\beta_{k}}{8}\bar{r}_k^2+\sum_{i=0}^{k-1}A_i\langle\tilde{\nabla}f(x^{i+1})-\nabla f(x^{i+1}), y^{i}-x^{i+1}\rangle\\
&+A_{i+1}\langle \nabla f(y^{i+1})-\tilde{\nabla}f(y^{i+1}),y^{i+1} - x^{i+1}\rangle,
\end{aligned}
\end{equation}
where $\phi_0(v^0)=0$ and $\beta_0 =0$.

Since $v^{k}=\arg\min\limits_{x}\phi_k(x)$, we apply Lemma \ref{lemma:3 point} again and obtain that:
\begin{equation}\nonumber
\begin{aligned}
A_{k}\psi(y^{k})\leq&\phi_{k}(v^{k})+\frac{\beta_{k}}{8}\bar{r}_{k}^2+\sum_{i=0}^{k-1}A_i\langle\tilde{\nabla}f(x^{i+1})-\nabla f(x^{i+1}), y^{i}-x^{i+1}\rangle\\
&+A_{i+1}\langle \nabla f(y^{i+1})-\tilde{\nabla}f(y^{i+1}),y^{i+1} - x^{i+1}\rangle\\
=&\sum_{i=1}^{k}a_i(f(x^i)+\langle\tilde\nabla f(x^i), v^k-x^i\rangle+g(v^{k}))+\frac{\beta_{k}}{2} \|x^{0}- v^{k}\|^2+\frac{\beta_{k}}{8}\bar{r}_{k}^2\\
&+\sum_{i=0}^{k-1}A_i\langle\tilde{\nabla}f(x^{i+1})-\nabla f(x^{i+1}), y^{i}-x^{i+1}\rangle+A_{i+1}\langle \nabla f(y^{i+1})-\tilde{\nabla}f(y^{i+1}),y^{i+1} - x^{i+1}\rangle\\
\leq&\sum_{i=1}^{k}a_i(f(x^i)+\langle\tilde\nabla f(x^i), x^*-x^i\rangle+g(x^{*}))+\frac{\beta_{k}}{2} \|x^{0}- x^{*}\|^2-\frac{\beta_{k}}{2} \|v^{k+1}-x^*\|^2\\
&+\frac{\beta_{k}}{8}\bar{r}_{k}^2+\sum_{i=0}^{k-1}A_i\langle\tilde{\nabla}f(x^{i+1})-\nabla f(x^{i+1}), y^{i}-x^{i+1}\rangle \\
&+A_{i+1}\langle \nabla f(y^{i+1})-\tilde{\nabla}f(y^{i+1}),y^{i+1} - x^{i+1}\rangle\\
\leq&\sum_{i=1}^{k}a_i(f(x^i)+\langle\nabla f(x^i), x^*-x^i\rangle+g(x^{*}))+\frac{\beta_{k}}{2} \|x^{0}- x^{*}\|^2-\frac{\beta_{k}}{2} \|v^{k+1}-x^*\|^2\\
&+\frac{\beta_{k}}{8}\bar{r}_{k}^2+\sum_{i=0}^{k-1}a_{i+1}\langle\tilde{\nabla}f(x^{i+1})-\nabla f(x^{i+1}), v^{i}-x^{*}\rangle\\
&+A_{i+1}\langle \nabla f(y^{i+1})-\tilde{\nabla}f(y^{i+1}),y^{i+1} - x^{i+1}\rangle\\
\leq&A_k\psi(x^*)+\frac{\beta_{k}}{2} \|x^{0}- x^{*}\|^2-\frac{\beta_{k}}{2} \|v^{k+1}-x^*\|^2+\frac{\beta_{k}}{8}\bar{r}_{k}^2\\
&+\sum_{i=0}^{k-1}a_{i+1}\langle\tilde{\nabla}f(x^{i+1})-\nabla f(x^{i+1}), v^{i}-x^{*}\rangle\\
&+A_{i+1}\langle \nabla f(y^{i+1})-\tilde{\nabla}f(y^{i+1}),y^{i+1} - x^{i+1}\rangle.
\end{aligned}
\end{equation}

We use $D_0$ and $D_k$ to replace $ \|x^{0}-x^*\|$ and $ \|v^k- x^*\|$.
Since $D_0^2-D_k^2\leq2D_0r_k\leq2D\bar{r}_k$ , we have
\begin{equation}\nonumber
\begin{aligned}
A_{k}\psi(y^{k})\leq& A_k\psi(x^*)+\frac{\beta_{k}}{2}D_0^2-\frac{\beta_{k}}{2}D_k^2+\frac{\beta_{k}}{8}\bar{r}_{k}^2\\
&+\sum_{i=0}^{k-1}a_{i+1}\langle\tilde{\nabla}f(x^{i+1})-\nabla f(x^{i+1}), v^{i}-x^{*}\rangle\\
&+A_{i+1}\langle \nabla f(y^{i+1})-\tilde{\nabla}f(y^{i+1}),y^{i+1} - x^{i+1}\rangle,
\end{aligned}
\end{equation}
which implies
\begin{equation}
\begin{aligned}
\psi(y^k)-\psi(x^*) \leq{}& \frac{\beta_k(D_{0}^2-D_k^2)}{2A_k}+\frac{\beta_k\bar{r}_k^2}{8A_k}+\sum_{i=0}^{k-1}a_{i+1}\langle\tilde{\nabla}f(x^{i+1})-\nabla f(x^{i+1}), v^{i}-x^{*}\rangle\\
&+A_{i+1}\langle \nabla f(y^{i+1})-\tilde{\nabla}f(y^{i+1}),y^{i+1} - x^{i+1}\rangle\\
\leq{} & \frac{9\beta_kD\bar{r}_k}{8A_k}+\sum_{i=0}^{k-1}a_{i+1}\langle\tilde{\nabla}f(x^{i+1})-\nabla f(x^{i+1}), v^{i}-x^{*}\rangle\\
&+A_{i+1}\langle \nabla f(y^{i+1})-\tilde{\nabla}f(y^{i+1}),y^{i+1} - x^{i+1}\rangle.
\end{aligned}
\end{equation}

We take the expectations of both sides and obtain
\begin{equation}\nonumber
\begin{aligned}
\mathbb{E}[\psi(y^k)]-E[\psi(x^*)] \leq& \mathbb{E}[\frac{9\beta_kD\bar{r}_k}{8A_k}]+\mathbb{E}[\sum_{i=0}^{k-1}a_{i+1}\langle\tilde{\nabla}f(x^{i+1})-\nabla f(x^{i+1}), v^{i}-x^{*}\rangle]\\
&+\mathbb{E}[\sum_{i=0}^{k-1}A_{i+1}\langle \nabla f(y^{i+1})-\tilde{\nabla}f(y^{i+1}),y^{i+1} - x^{i+1}\rangle].
\end{aligned}
\end{equation}

Since $\mathbb{E}[\tilde\nabla f(x^{i+1})-\nabla f(x^{i+1})]=0$ and $v^i$, $x^*$, $\xi_{i+1}$ are independent, we have
\begin{equation}\nonumber
\begin{aligned}
&\mathbb{E}\left[\sum_{i=0}^{k-1}a_{i+1}\langle\tilde{\nabla}f(x^{i+1})-\nabla f(x^{i+1}), v^{i}-x^{*}\rangle\right]\\
=&\mathbb{E}_{-\xi_{i+1}^x}\left[\mathbb{E}_{\xi_{i+1}^x}\left[\sum_{i=0}^{k-1}a_{i+1}\langle\tilde{\nabla}f(x^{i+1})-\nabla f(x^{i+1}), v^{i}-x^{*}\rangle\right]\right]=0.
\end{aligned}
\end{equation}

For the same reason, we have
\begin{equation}\nonumber
\begin{aligned}
&\mathbb{E}\left[\sum_{i=0}^{k-1}A_{i+1}\langle \nabla f(y^{i+1})-\tilde{\nabla}f(y^{i+1}),y^{i+1} - x^{i+1}\rangle\right]\\
=&\mathbb{E}_{-\xi_{i+1}^y}\left[\mathbb{E}_{\xi_{i+1}^y}\left[\sum_{i=0}^{k-1}A_{i+1}\langle \nabla f(y^{i+1})-\tilde{\nabla}f(y^{i+1}),y^{i+1} - x^{i+1}\rangle\right]\right]=0.
\end{aligned}
\end{equation}

Thus
\begin{equation}\nonumber
\begin{aligned}
\mathbb{E}[\psi(y^k)]-\psi(x^*) \leq& \frac{9}{8}\mathbb{E}\left[\frac{\beta_kD\bar{r}_k}{A_k}\right].
\end{aligned}
\end{equation}

We will apply Lemma \ref{lemma:the bound of beta 2} and \ref{lemma:log term} to obtain the final complexity.
Note that $k^*=\mathop{\arg\min}\limits_{0\le i\le k}\frac{\bar{r}_k}{\sum_{i=0}^{k-1}\bar{r}_i}$.
Applying Lemma \ref{lemma:log term}, we obtain that:

\begin{equation}\label{proof:theorem 3 ineq 1}
\begin{aligned}
\frac{\bar{r}_{k^*}^\frac{1}{2}}{\sum_{i=0}^{k^*-1}\bar{r}_i^{\frac{1}{2}}}\leq\frac{(\frac{\bar{r}_k}{\bar{r}_0})^{\frac{1}{2}\times\frac{1}{k}}\log{e(\frac{\bar{r}_k}{\bar{r}_0})^{\frac{1}{2}}}}{k}\leq \frac{(\frac{4D_0}{\bar{r}})^{\frac{1}{2k}}\log{e\frac{4D_0}{\bar{r}}}}{2k}.
\end{aligned}
\end{equation}

Thus, for $k^*$, combining the inequality (\ref{proof:theorem 3 ineq 1}) and Lemma \ref{lemma:the bound of beta 2}, it holds that
\begin{equation}
\begin{aligned}
&\mathbb{E}\big[\psi(y^{k^*})\big]-\psi(x^*)\\
&\leq\frac{9}{8}\mathbb{E}\left[\frac{\beta_{k^*}D\bar{r}_{k^*}}{A_{k^*}}\right]\\
&\leq\frac{9}{8}\mathbb{E}\left[\beta_{k}D\bigg(\frac{\bar{r}_{k^*}^\frac{1}{2}}{\sum_{i=0}^{k^*-1}\bar{r}_i^{\frac{1}{2}}}\bigg)^{2}\right]\\
&\leq\frac{9}{8}\mathbb{E}[(2^{\frac{9+9\nu}{2}}\tilde{M}_\nu D^{1+\nu} k^{\frac{3-3\nu}{2}}+2^5k^{\frac{3}{2}}D\sigma)\bigg( \frac{(\frac{4D}{\bar{r}})^{\frac{1}{2k}}\log{e\frac{4D}{\bar{r}}}}{2k}\bigg)^2]\\
&\leq36(\frac{4D}{\bar{r}})^{\frac{1}{k}}\log^2{e\frac{4D}{\bar{r}}}\frac{\tilde{M_{\nu}}D^{1+\nu}{k}^{\frac{3-3v}{2}}+k^{\frac{3}{2}}D\sigma}{k^2}\\
&\leq36(\frac{4D}{\bar{r}})^{\frac{1}{k}}\log^2{e\frac{4D}{\bar{r}}}(\frac{\tilde{M_{\nu}}D^{1+\nu}}{k^{\frac{1+3\nu}{2}}}+\frac{D\sigma}{\sqrt k}).\\
\end{aligned}
\end{equation}
where we use the facts $\{\beta_{i}\}_{i\in \mathbb{N}}$ is nondecreasing and the random variable $ B$ is independent of both $k$ and $D$.

Finally, it remains to use $z^{k}=y^{k^*}$ by definition.
\end{proof}

The following lemma is used to show that the balance equation admits a closed-form solution.

\begin{lemma}\label{lemma:auxiliary result 3}
Let $\beta$,$l$,$d\geq0$ and $r>0$. Then the equation 
\begin{equation}\label{proof:lemma 10 eq 1}
(\beta_+-\beta)r=[l-\beta_+d]_+
\end{equation}
has a unique solution given by
\begin{equation}\label{proof:lemma 10 eq 2}
\beta_+=\beta+\frac{[l-\beta d]_+}{r+d}.
\end{equation}
\end{lemma}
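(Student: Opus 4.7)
The plan is to perform a case analysis based on the sign of $l - \beta d$, which governs whether the positive-part on the right-hand side is active. First I would observe that since $r > 0$ and the right-hand side of \eqref{proof:lemma 10 eq 1} is nonnegative, any solution must satisfy $\beta_+ \geq \beta$. Moreover, the map $\beta_+ \mapsto (\beta_+ - \beta) r + [\beta_+ d - l]_+$ (equivalent to moving everything to one side) is the sum of a strictly increasing linear function and a nondecreasing piecewise-linear function, hence is strictly increasing in $\beta_+$; this immediately gives uniqueness of the solution to \eqref{proof:lemma 10 eq 1}.

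For existence, I would treat two cases. In the case $l \leq \beta d$, I would verify directly that the candidate $\beta_+ = \beta$ solves \eqref{proof:lemma 10 eq 1}: the left-hand side is zero, and since $\beta_+ d = \beta d \geq l$, the right-hand side $[l - \beta_+ d]_+$ is also zero. In this case the formula \eqref{proof:lemma 10 eq 2} correctly reduces to $\beta_+ = \beta$ because $[l - \beta d]_+ = 0$. In the case $l > \beta d$, I would tentatively drop the positive-part by assuming $l - \beta_+ d > 0$, solve the resulting linear equation
\begin{equation*}
(\beta_+ - \beta) r = l - \beta_+ d,
\end{equation*}
which rearranges to $\beta_+ (r + d) = l + \beta r$, hence
\begin{equation*}
\beta_+ = \beta + \frac{l - \beta d}{r + d},
\end{equation*}
matching \eqref{proof:lemma 10 eq 2} since $[l - \beta d]_+ = l - \beta d$ here.

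To close the argument I would verify the consistency of the positive-part assumption in the second case by computing
\begin{equation*}
l - \beta_+ d = (l - \beta d) - \frac{(l - \beta d)\, d}{r + d} = \frac{r\,(l - \beta d)}{r + d} > 0,
\end{equation*}
which confirms that the positive-part is indeed active at the computed $\beta_+$. Combined with uniqueness, this yields \eqref{proof:lemma 10 eq 2} as the unique solution. No step looks like a genuine obstacle; the only subtlety is to handle the nondifferentiability of the positive-part cleanly, which the two-case split resolves.
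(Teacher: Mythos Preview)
Your proposal is correct and follows essentially the same route as the paper: establish uniqueness by monotonicity (the paper phrases it as ``LHS increases, RHS decreases''), then verify the candidate formula by the same two-case split on the sign of $l-\beta d$. One small slip: the function you write for uniqueness, $(\beta_+-\beta)r + [\beta_+ d - l]_+$, is \emph{not} what you get by moving everything to one side---that would be $(\beta_+-\beta)r - [l-\beta_+ d]_+$; the latter is still strictly increasing, so your conclusion stands once you correct the expression.
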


This auxiliary result has been used in \citet[Lemma E.1]{rodomanov2024universal}. We give a proof for completeness.

\begin{proof}
First, the equation has a unique solution since the left-hand side increases from zero to infinity monotonically with respect to $\beta_+$ while the right-hand side decreases from a nonnegative number to zero monotonically with respect to $\beta_+$. 

We show that $\beta_+$ defined in \eqref{proof:lemma 10 eq 2} is the very solution of \eqref{proof:lemma 10 eq 1}.

When $l-\beta d\leq0$, $\beta_+=\beta$.
$LHS=(\beta_+-\beta)r=0$ and $RHS = [l-\beta_+d]_+= [l-\beta d]_+ \leq0$, which implies $RHS=0$. Therefore, $LHS=RHS$ and  $\beta_+$ is the solution.

When $l-\beta d>0$, $\beta_+=\beta+\frac{l-\beta d}{r+d}$.
then $LHS=(\beta_+-\beta)r=(\beta+[\frac{l-\beta d}{r+d}]_+-\beta)r=r\frac{l-\beta d}{r+d}$ and $RHS = [l-\beta_+d]_+ =[l-\beta d-\frac{l-\beta d}{r+d}d]_+=[\frac{r}{r+d}(l-\beta d)]_+= \frac{r}{r+d}(l-\beta d)$. Therefore, $LHS=RHS$ and  $\beta_+$ is the solution as well.
\end{proof}

\section{Two approaches for automatic initialization of parameters in Algorithm~\ref{alg:agda}}\label{sec:auto init}

\subsection{Automatic initialization of \texorpdfstring{$\beta_0$}{beta 0}}\label{sec:init beta}

In the proof of Proposition~\ref{proposition:line search finite} and Theorem~\ref{theorem:complexity}, we assume that $\beta_0 \leq 2^7I_\nu\widehat{M}_{\nu}\bar{r}^\nu$. This is reasonable since the upper bound of $\beta_k$ increases polynomial in k, it still holds for enough large $k$. Previous works often ignore the choice of a legal $\beta_0$. Nevertheless, we provide a simple method for choosing an admissible $\beta_0$.

\begin{algorithm}[H]
\renewcommand{\algorithmicrequire}{\textbf{Input:}}
\renewcommand{\algorithmicensure}{\textbf{Output:}}
\caption{$\beta_0$ Initialization Method}\label{alg:beta init}
\begin{algorithmic}[1]
\Require $x^0$, $\bar{r}$, any other point $x' \in \Rbb^d$ such that $\|x'-x^0\|\leq\bar{r}$ and $f(x')-f(x^0)-\langle\nabla f(x^0),x'-x^0\rangle>0$;
\Ensure $\bar{\beta} \leq 2^7I_\nu\widehat{M}_{\nu}\bar{r}^\nu$;
\State Set $c = \min\{[f(x')-f(x^0)-\langle\nabla f(x^0), x^0-x'\rangle]/\bar{r}^2, \frac{1}{2}\}$;
\Statex and $M=2\frac{f(x')-f(x^0)-\langle\nabla f(x^0), x^0-x'\rangle-c\bar{r}^2/2}{\|x^0-x'\|^2}$;
\State Let $\bar{\beta} = \bar{r}\max\{8\sqrt{2M}, 128M\}\min\{1, \sqrt{c}\}$;
\end{algorithmic}
\end{algorithm}

\begin{proposition}\label{proposition:beta_0 initialize}
Suppose $f(\cdot)$ is locally \holder{} and $f(\cdot)$ is not a linear function in $\dom g$. If $\bar{r}$ is small enough such that $\bar{r}\leq4D_0$, then Algorithm~\ref{alg:beta init} can generate a $\beta_0$ that satisfies 
\begin{equation}
\beta_0 \leq2^7I_\nu\widehat{M}_{\nu}\bar{r}^{v},
\end{equation}
and this method can be implemented in one operation.
\end{proposition}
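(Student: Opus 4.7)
The plan is to directly bound the output $\bar\beta$ of Algorithm~\ref{alg:beta init} using two H\"older-type upper estimates of the Bregman-style quantity $D_f := f(x')-f(x^0)-\langle\nabla f(x^0),x'-x^0\rangle$, which is strictly positive by the input conditions. Since $\bar r \le 4D_0$, both $x^0$ and $x'$ lie in a ball on which the local \holder{} constant controls the gradient, so I may use (i) the raw H\"older-descent bound $D_f \le \widehat{M}_\nu h^{1+\nu}/(1+\nu)$, where $h := \|x'-x^0\|$, and (ii) the Young-type bound from Lemma~\ref{lemma:quad upper bound} with $\delta = c\bar r^2$, namely $D_f \le \gamma(\widehat{M}_\nu, c\bar r^2)\,h^2/2 + c\bar r^2/2$, which rearranges (by the definition of $M$ in the algorithm) into the key inequality $M \le \gamma(\widehat{M}_\nu, c\bar r^2)$.

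The argument then splits into the square-root branch, in which $M \le 1/128$ so $\max\{8\sqrt{2M}, 128M\} = 8\sqrt{2M}$, and the linear branch, in which $M > 1/128$. In the square-root branch, using $c \le 1$ one finds $\bar\beta \le 8\bar r\sqrt{2Mc}$, which simplifies via the Case-1 identities $c = D_f/\bar r^2$ and $M = D_f/h^2$ to $\bar\beta = 8\sqrt{2}\, D_f/h$; the H\"older bound (i) then gives $\bar\beta \le \tfrac{8\sqrt{2}}{1+\nu}\widehat{M}_\nu \bar r^\nu \le 2^7 I_\nu \widehat{M}_\nu \bar r^\nu$, since $I_\nu\ge 1$. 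The Case-2 variant ($c = 1/2$) proceeds analogously, with $\sqrt c = 1/\sqrt 2$ producing only an additional absolute constant. In the linear branch, one uses $\bar\beta \le 128 M \bar r\sqrt c$ and substitutes $M \le \gamma(\widehat{M}_\nu, c\bar r^2) = \bigl(\tfrac{1-\nu}{(1+\nu)c\bar r^2}\bigr)^{(1-\nu)/(1+\nu)}\widehat{M}_\nu^{2/(1+\nu)}$; after collecting powers and combining the $c$-exponent with the case hypothesis (either $c\le 1/2$ in Case~1, or $c=1/2$ in Case~2), one matches against $\widehat{M}_\nu \bar r^\nu$ modulo the explicit constant $2^7 I_\nu$, where $I_\nu = (1/(1-\nu))^{(1+\nu)/2}$ is tailor-made to absorb the $(1-\nu)$-dependent piece of $\gamma$.

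The hard part is the bookkeeping of exponents in the linear branch: the factors of $c$, $\bar r$, $\widehat{M}_\nu$, and $1-\nu$ must be collected precisely, and one has to invoke the case hypothesis -- specifically $D_f \le \bar r^2/2$ in Case~1 (equivalently $\sqrt M\, h \le \bar r/\sqrt 2$, which converts the troublesome factor $M^{3/2}h$ arising from $\bar\beta = 128 M^{3/2} h$ into $M\bar r/\sqrt 2$) or $c = 1/2$ in Case~2 -- at exactly the right moment to collapse the $\bar r$-exponent from $(3\nu-1)/(1+\nu)$ to $\nu$ and the $\widehat{M}_\nu$-exponent from $2/(1+\nu)$ to $1$. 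Once this has been verified in each subcase, taking the maximum over the two branches yields $\bar\beta \le 2^7 I_\nu\widehat{M}_\nu \bar r^\nu$. The implementability assertion follows by inspection of Algorithm~\ref{alg:beta init}, which requires only a single function value and gradient evaluation at $x'$ plus $\mathcal{O}(1)$ arithmetic operations.
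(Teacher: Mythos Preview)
Your linear-branch argument does not close, and this is not just bookkeeping. After substituting $M\le\gamma(\widehat{M}_\nu,c\bar r^2)$ into $\bar\beta\le 128\,M\bar r\sqrt c$, the collected exponents are
\[
\bar\beta\;\le\;128\Bigl(\tfrac{1-\nu}{1+\nu}\Bigr)^{\frac{1-\nu}{1+\nu}}\,c^{\frac{3\nu-1}{2(1+\nu)}}\,\bar r^{\frac{3\nu-1}{1+\nu}}\,\widehat{M}_\nu^{\frac{2}{1+\nu}},
\]
and neither the Case~1 hypothesis $c\le 1/2$ nor the Case~2 hypothesis $c=1/2$ converts $\bar r^{(3\nu-1)/(1+\nu)}\widehat{M}_\nu^{2/(1+\nu)}$ into $\bar r^{\nu}\widehat{M}_\nu$; these exponents agree only when $\nu=1$. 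Your Case~1 alternative route---rewriting $\bar\beta=128M^{3/2}h$ and using $\sqrt{M}\,h\le\bar r/\sqrt2$ to obtain $\bar\beta\le 128M\bar r/\sqrt2$---still leaves you needing $M\le\text{const}\cdot\widehat{M}_\nu\bar r^{\nu-1}$, whereas both the raw H\"older bound and $M\le\gamma$ only give $M\le\text{const}\cdot\widehat{M}_\nu h^{\nu-1}$, which is the \emph{wrong} direction since $h\le\bar r$ and $\nu-1\le 0$. Part of the trouble is that the displayed algorithm contains a typo: the paper's own proof uses $\min\{8\sqrt{2M},128M\}$, not $\max$, and with $\max$ the stated bound appears to be false for $\nu<1$.

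The paper bypasses your case split entirely with a single interpolation step: since $c\le 1$ and $\min\{a^{1/2},a\}\le a^{(1+\nu)/2}$ for every $\nu\in[0,1]$, one has $c^{1/2}\min\{(128M)^{1/2},128M\}\le c^{(1-\nu)/2}(128M)^{(1+\nu)/2}$. Substituting $M\le\gamma(\widehat{M}_\nu,c\bar r^2)$ and using $\gamma^{(1+\nu)/2}=(\tfrac{1-\nu}{1+\nu})^{(1-\nu)/2}(c\bar r^2)^{-(1-\nu)/2}\widehat{M}_\nu$ makes the $c^{(1-\nu)/2}$ factors cancel \emph{exactly}, leaving $\bar\beta\le 128^{(1+\nu)/2}(\tfrac{1-\nu}{1+\nu})^{(1-\nu)/2}\widehat{M}_\nu\bar r^{\nu}\le 2^7I_\nu\widehat{M}_\nu\bar r^{\nu}$. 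The point is that the $(1+\nu)/2$ power on $M$ and the $(1-\nu)/2$ power on $c$ are what produce the correct final exponents; treating the two branches separately never generates this balanced pair.
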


\begin{proof}

Since \[M = 2\frac{f(x')-f(x^0)-\langle\nabla f(x^0), x^0-x'\rangle-c\frac{\bar{r}^2}{2}}{\|x^0-x'\|^2}\geq0, \]
we have
\begin{equation}\label{proof:proposition 3 eq 1}
f(x')=f(x^0)+\langle\nabla f(x^0), x^0-x'\rangle+\frac{M}{2}\|x^0-x'\|^2 +c\frac{\bar{r}^2}{2}.
\end{equation}

The equation (\ref{proof:proposition 3 eq 1}) is tight and $x^0, x'\in {\Bcal}_{3D_0}(x^*)$, so that we have 
\begin{equation}
\begin{aligned}
M\leq\gamma(\widehat{M}_{\nu}, c\bar{r}^2)&=(\frac{1-\nu }{1+\nu }\frac{1}{c\bar{r}^2})^{\frac{1-\nu }{1+\nu }}\widehat{M}_{\nu}^{\frac{2}{1+\nu }}.
\end{aligned}
\end{equation}

\begin{equation}
\begin{aligned}
c^{\frac{1}{2}}\bar{r}\min\{{(128\bar{M})^{\frac{1}{2}}}, 128\bar{M}\}\leq c^{\frac{1-\nu }{2}}\bar{r}(128\bar{M})^{\frac{1+\nu }{2}}&=128(\frac{1-\nu }{1+\nu })^{\frac{1-\nu }{2}}\widehat{M}_{\nu}\bar{r}^{v}.
\end{aligned}
\end{equation}

Thus
\begin{equation}
\begin{aligned}
c^{\frac{1}{2}}\bar{r}\min\{{(128\bar{M})^{\frac{1}{2}}}, 128\bar{M}\}\leq2^7I_\nu\widehat{M}_{\nu}\bar{r}^{v}.
\end{aligned}
\end{equation}

So we can initialize $\beta_0 = c^{\frac{1}{2}}\bar{r}\min\{{(128\bar{M})^{\frac{1}{2}}}, 128\bar{M}\}$.
\end{proof}

\subsection{Automatic initialization of \texorpdfstring{$\bar{r}$}{bar r}}\label{sec:init bar r}

As mentioned beforehand, Algorithm~\ref{alg:agda} requires an input $\bar{r}$ as a guess of $4D_0$ that satisfies $\bar{r}\leq4D_0$. Setting a small enough $\bar{r}$ to meet $\bar{r}\leq4D_0$ will incur a multiplicative cost of $(\frac{4D_0}{\bar{r}})^{1+\nu}$ in the convergence rate for other algorithms without distance adaptation. In contrast,  we reduce the multiplicative cost to $\log^2(\frac{4D_0}{\bar{r}})$. Moreover, we provide a simple method (Algorithm~\ref{alg:bar-r-init}) for obtaining an admissible $\bar{r}\leq4D_0$ in some special cases.

Proposition~\ref{proposition:bar r initialize} can handle the cases where we can choose $x^0$ and make sure $f(\cdot)+g(\cdot)$ is weakly smooth on one of its neighborhoods. Then we can modify the problem by setting $f'(x)=f(x)+g(x)$ and $g'(x) = 0$ to get a desired $\bar{r}$.

\begin{algorithm}[H]
\renewcommand{\algorithmicrequire}{\textbf{Input:}}
\renewcommand{\algorithmicensure}{\textbf{Output:}}
\caption{Initialization of $\bar{r}$}\label{alg:bar-r-init}
\begin{algorithmic}[1]
\Require $x^0$ and an initial guess $r$ of $4D_0$
\Ensure $\bar{r}\leq4D_0$
\State Initialize $i\leftarrow-1$
\Repeat
\State $i\leftarrow i+1$
\State $d_i\leftarrow\ 2^{-i}r$
\State Run Algorithm \ref{alg:agda} with parameter $d$ by one iteration and collect the point $v^1_i$ and the coefficient $\beta_{1, i}$
\State Denote $r_{1, i}=\|v^1_i-x^0\|$
\Until{$v^1_i$ is an interior point in $\dom g$ and $r_{1, i}\geq d_i$}
\State Set $\bar{r} = d_i$
\end{algorithmic}
\end{algorithm}

\begin{proposition}\label{proposition:bar r initialize}
For any $x\in\dom g$, $g(x)=0$, if there exists $\delta>0$, $S= {\Bcal}_{\delta}(x^0)\subset  \dom g$, and let $\nu_S$ be the maximal H\"older exponent of $f(\cdot)$ on $S$ with finite local H\"older continuous constant $\hat{M}_{\nu_{S}}<+\infty$, $\nu_S>0$, then Algorithm~\ref{alg:bar-r-init} can generate $\bar{r}\leq4D_0$, and this method can be implemented in a finite number of iterations.
\end{proposition}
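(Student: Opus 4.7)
The plan is to analyze the first iteration of Algorithm~\ref{alg:agda} explicitly, verify termination of the outer loop for sufficiently small $d_i$, and then apply the one-step bound of Lemma~\ref{lemma:convergence error} to force $d_i\leq 4D_0$ at termination.

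Since $g\equiv 0$ on $\dom g$, one iteration of Algorithm~\ref{alg:agda} called with parameter $d_i$ satisfies $A_1=d_i$, $\tau_0=1$, $x^1=x^0$, and the proximal update collapses to a gradient step
\[
v^1_i = x^0 - \frac{d_i}{\beta_{1,i}}\nabla f(x^0), \qquad r_{1,i} = \frac{d_i}{\beta_{1,i}}\|\nabla f(x^0)\|_*.
\]
If $\nabla f(x^0)=0$ then $x^0$ is already a minimizer and the claim is vacuous, so I assume $\|\nabla f(x^0)\|_*>0$. The stopping inequality $r_{1,i}\geq d_i$ is therefore equivalent to $\beta_{1,i}\leq\|\nabla f(x^0)\|_*$, while the interiority of $v^1_i$ only requires $r_{1,i}<\delta$.

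For termination, I would invoke Lemma~\ref{lemma:the bound of beta 3} applied to this single step (with $\beta_0$ chosen so its prerequisite $\beta_0\leq 2^7 I_{\nu_S}\widehat{M}_{\nu_S}d_i^{\nu_S}$ holds for small $d_i$, as can be arranged by the routine of Section~\ref{sec:init beta}). The lemma yields $\beta_{1,i}\leq 2^8 I_{\nu_S}\widehat{M}_{\nu_S}d_i^{\nu_S}$. Since $\nu_S>0$ and $\|\nabla f(x^0)\|_*$ is a fixed positive constant, both $\beta_{1,i}\leq\|\nabla f(x^0)\|_*$ and $r_{1,i}\leq\|\nabla f(x^0)\|_*\cdot d_i^{1-\nu_S}/(2^8 I_{\nu_S}\widehat{M}_{\nu_S})<\delta$ hold for all $d_i$ below a fixed threshold, so the outer loop halts after finitely many halvings. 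The local H\"older smoothness on $S=\mathcal{B}_\delta(x^0)$ suffices, because along the first-iteration line search the trial points $v^1_i(\beta)$ stay inside $S$ by the monotonicity of $\beta\mapsto\|v^1_i(\beta)-x^0\|$ from Lemma~\ref{lemma:decrease step by step}.

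Once the loop outputs $\bar{r}=d_i$, apply Lemma~\ref{lemma:convergence error} at $k=0$ to get
\[
0 \leq \psi(y^1)-\psi(x^*) \leq \frac{\beta_1(D_0^2-D_1^2)}{2A_1}+\frac{\beta_1\bar{r}_1^2}{8A_1};
\]
inspection of its proof shows this inequality requires only the successful line search at step $0$, not $\bar{r}\leq 4D_0$ (which is used elsewhere only to propagate boundedness inductively). Nonnegativity of the left side gives $D_1\leq D_0+\bar{r}_1/2$; the stopping condition forces $\bar{r}_1=\max\{\bar{r}_0,r_{1,i}\}=r_{1,i}$, and the triangle inequality then reads $r_{1,i}\leq D_0+D_1\leq 2D_0+r_{1,i}/2$, so $r_{1,i}\leq 4D_0$ and finally $\bar{r}=d_i\leq r_{1,i}\leq 4D_0$.

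The main obstacle is the \emph{localization} of the earlier lemmas, which are stated under H\"older smoothness on $\mathcal{B}_{3D_0}(x^*)$, to our small neighborhood $S$. The remedy is direct: one verifies that every trial iterate $v^1_i(\beta)$ examined by the first-step line search lies in $S$ (using the closed-form expression and $\beta\geq\beta_0$), so the quadratic upper bound of Lemma~\ref{lemma:quad upper bound} with constant $\widehat{M}_{\nu_S}$ is available throughout, and the line-search termination argument of Lemma~\ref{lemma:one-step-boundedness} transfers verbatim to the present setting.
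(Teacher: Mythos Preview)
Your overall strategy matches the paper's: derive $d_i\le r_{1,i}\le 4D_0$ at termination from the one-step convergence inequality (you are right that the proof of Lemma~\ref{lemma:convergence error} uses only line-search success, not the hypothesis $\bar r\le 4D_0$), and argue termination by bounding $\beta_{1,i}$ via Lemma~\ref{lemma:the bound of beta 3} so that the stopping test $r_{1,i}\ge d_i$ is eventually met.

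The gap is in your interiority step. From $\beta_{1,i}\le 2^8 I_{\nu_S}\widehat M_{\nu_S}d_i^{\nu_S}$ and $r_{1,i}=d_i\|\nabla f(x^0)\|_*/\beta_{1,i}$ you obtain a \emph{lower} bound
\[
r_{1,i}\ \ge\ \frac{\|\nabla f(x^0)\|_*\,d_i^{1-\nu_S}}{2^8 I_{\nu_S}\widehat M_{\nu_S}},
\]
not the upper bound you wrote; an upper bound on $\beta_{1,i}$ cannot yield $r_{1,i}<\delta$. Worse, the closed form $v^1_i=x^0-(d_i/\beta_{1,i})\nabla f(x^0)$ already presupposes that the unconstrained minimizer lies in $\dom g$, so using it to establish interiority is circular (the same circularity affects your last paragraph's claim that all line-search trial points stay in $S$ ``using the closed-form expression'').

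The paper breaks this loop by applying Lemma~\ref{lemma:decrease step by step} directly to the \emph{constrained} subproblem: with $\beta_0$ fixed, the auxiliary point
\[
v'_i=\arg\min_{x\in\dom g}\Big\{\langle\nabla f(x^0),x\rangle+\frac{\beta_0}{2d_i}\|x-x^0\|^2\Big\}
\]
satisfies $\|v'_i-x^0\|\to 0$ as $d_i\to 0$, and since line search only increases $\beta$ the same lemma gives $\|v^1_i-x^0\|\le\|v'_i-x^0\|$. This places $v^1_i$ (and every line-search trial point) in $S$ for small $d_i$ without ever assuming the closed form; only \emph{after} interiority is secured does the paper invoke the first-order optimality condition to get the explicit expression and the lower bound on $r_{1,i}$ that triggers termination.
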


\begin{proof}

Without loss of generality, we assume $\delta\leq3D_0$, since if $\delta>3D_0$, we can always take a smaller $\delta'\leq3D_0$ that still satisfies the condition.

Note that Theorem \ref{theorem:ball} implies that if $\bar{r}_{1, i}=r_{1, i}$, then $\bar{r}\leq r_{1, i}\leq4D_0$, and this conclusion is independent of the value of $\beta_1$. So we only need to prove that this method completes in a finite number of operations. Note that $x^1 = \tau_0v^0+(1-\tau_0)y^0=\tau_0x^0+(1-\tau_0)x^0=x^0$ does not depend on the value of $\tau_0$. The condition of this method ensures that there exists an interior point in the direction of $-\nabla f(x^0)$.

Applying Lemma \ref{lemma:decrease step by step}, it holds that
\begin{equation}\nonumber
\|v'_i-x^0\|\geq\|v^1_i-x^0\|,
\end{equation}
where we define 
\begin{equation}
\begin{aligned}
v'_i =&\arg\min_{x\in \dom g}(d_i\langle\nabla f(x^1), x-x^1\rangle+\frac{\beta_0\|x-x^0\|^2}{2}\\
=&\arg\min_{x\in \dom g}\langle\nabla f(x^0), x-x^0\rangle+\frac{\beta_0\|x-x^0\|^2}{2d_i}.
\end{aligned}
\end{equation}

Applying Lemma~\ref{lemma:decrease step by step} again with $h_i = \frac{\beta_0}{2d_i}$, we have $\|v'_i-x^0\|\to 0$ as $i\to+\infty$.
Therefore, there exists large enough $i^*$ such that $\forall i\geq i^*, $it satisfies that
\begin{equation}\label{proof:proposition 4 ineq 1}
\delta\geq\|v'_i-x^0\|\geq\|v^1_i-x^0\|,
\end{equation}

\textbf{Case 1:} If this method requires at most $i^*$ operations. Then we prove it directly.

\textbf{Case 2:} If this method requires more than $i^*$ operations.

Inequality \eqref{proof:proposition 4 ineq 1} shows that $v^1_i\in {\Bcal}_\delta(x^0)\subset \dom g$, which means $v^1_i$ is an interior point.
Then, applying the first-order optimality condition to the interior point $v^1_i$, we have:
\begin{equation}
\begin{aligned}
 d_i\nabla f(x^0)+\beta_{1, i}(v^1_i-x^0)=0, \\
 v^1_i-x^0=d_i\frac{\nabla f(x^0)}{\beta_{1, i}}.
\end{aligned}
\end{equation}

We can adapt Algorithm~\ref{alg:beta init} to obtain a valid $\beta_0$ to produce $\beta_1\leq c_{\nu}d_i^{\nu}$ , which is guaranteed by Lemma \ref{lemma:the bound of beta 3}, where $c_\nu =2^7I_\nu\widehat{M}_{\nu}$. Note that
\begin{equation}
\begin{aligned}
r_{1, i}=&\|v^1_i-x^0\|=d_i\frac{\|\nabla f(x^0)\|}{\beta_1}\geq d_i^{1-\nu }\frac{\|\nabla f(x^0)\|}{c_\nu }.
\end{aligned}
\end{equation}
 Therefore,$r_{1, i} = d_i^{1-\nu }\frac{\|\nabla f(x^0)\|}{c_\nu }\geq d_i$ when $2^{-i}r=d_i\leq\left(\frac{\|\nabla f(x^0)\|}{c_\nu }\right)^{\frac{1}{v}}, v\neq0$ and this method requires at most $-\frac{1}{\nu}\log\left(\frac{\|\nabla f(x^0)\|}{c_\nu }\right)/\log2$ iterations.
\end{proof}

\section{More experiment details}\label{sec:experiment details}
In this section, we provide more details about the experiments. 

\paragraph{Softmax problem} 
We first reexamine the softmax problem with more parameter settings. Specifically, we set $\mu\in \{0.1, 0.01, 0.001\}$. In all the results, we find that our AGDA consistently performs better than the other compared methods. 

\begin{figure}[h]
  \centering
  \begin{minipage}[t]{0.32\textwidth} 
    \centering
    \includegraphics[width=\textwidth]{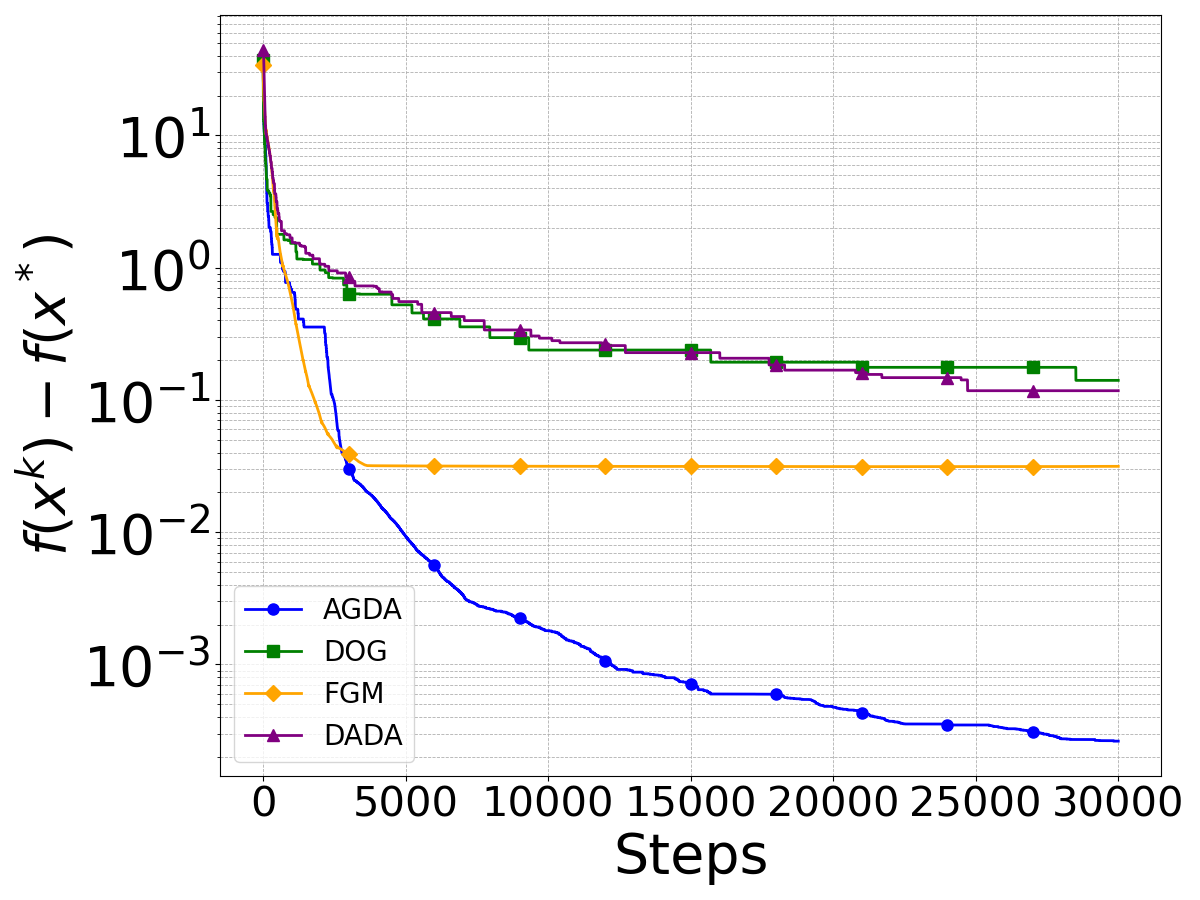}
  \end{minipage}
  \hfill 
  \begin{minipage}[t]{0.32\textwidth}
    \centering
    \includegraphics[width=\textwidth]{Repo/images/example5_seed789_mu0.010_maxiter30000.png}
  \end{minipage}
  \hfill 
  \begin{minipage}[t]{0.32\textwidth}
    \centering
    \includegraphics[width=\textwidth]{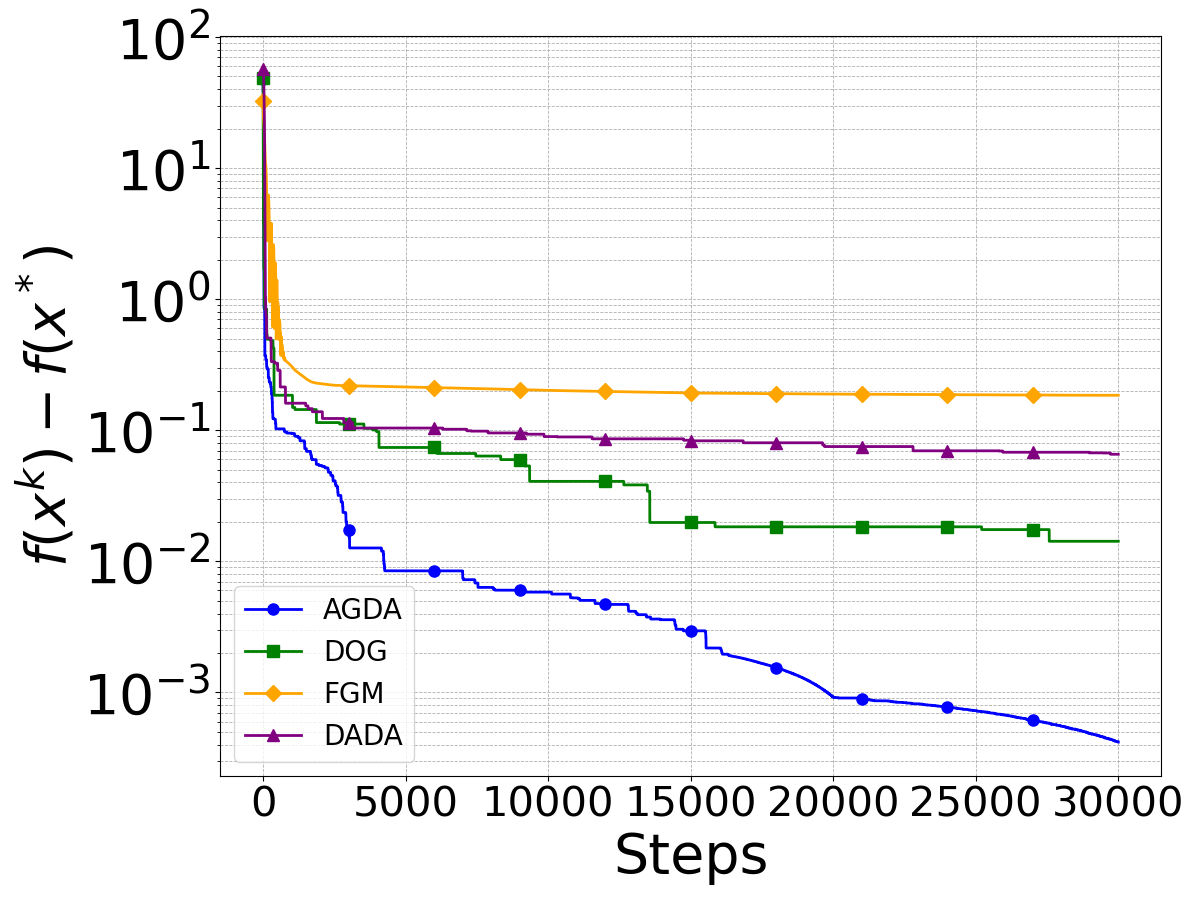}
  \end{minipage}
  \caption{Performance of the compared algorithms on the softmax problem. From left to right: $\mu=0.1$, $\mu = 0.01$ and $\mu = 0.001$.  \label{figure 3}} 
\end{figure}

\begin{figure}[h]
  \centering
  \begin{minipage}[t]{0.32\textwidth} 
    \centering
    \includegraphics[width=\textwidth]{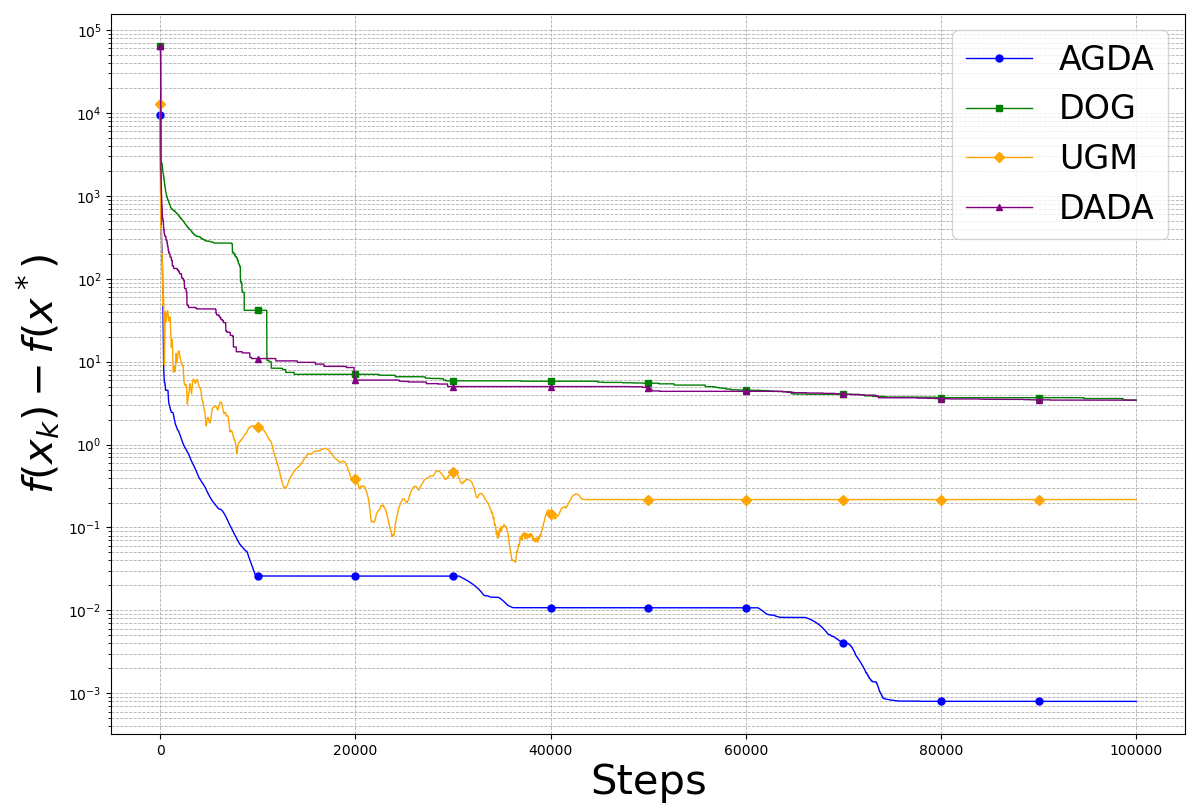}
  \end{minipage}
  \hfill 
  \begin{minipage}[t]{0.32\textwidth}
    \centering
    \includegraphics[width=\textwidth]{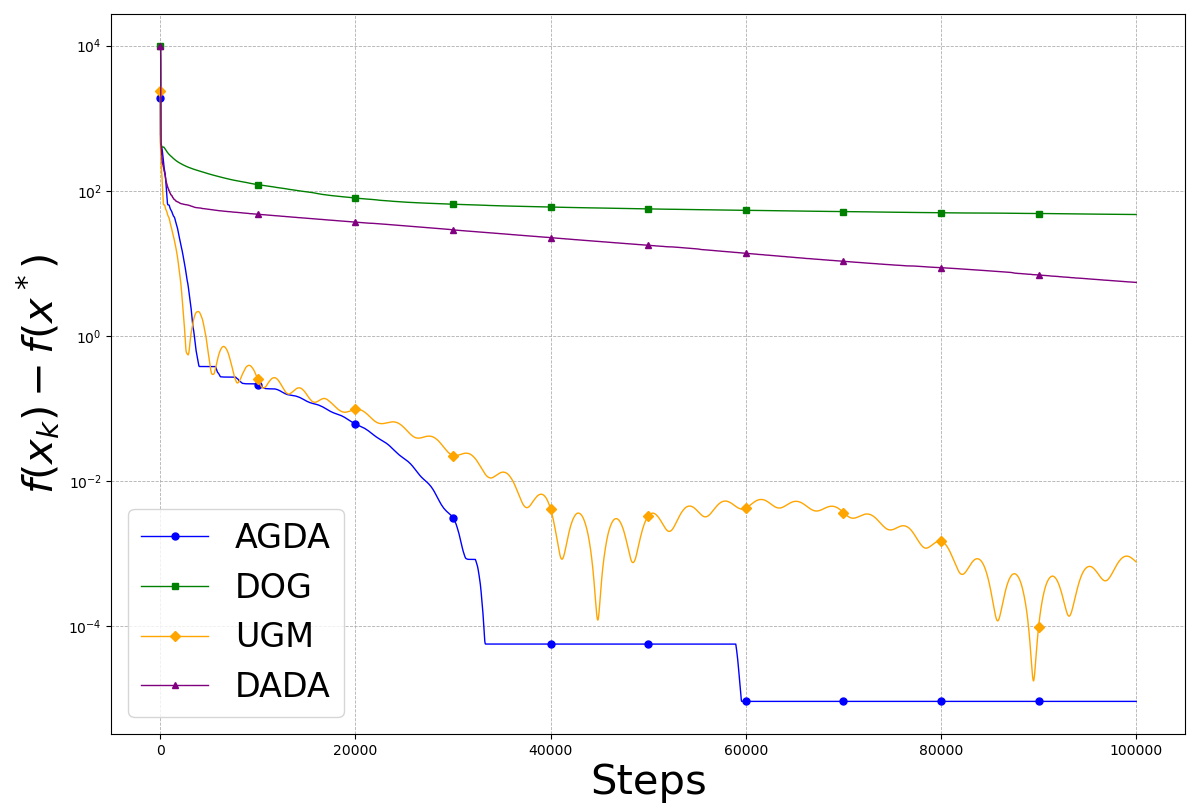}
  \end{minipage}
  \hfill 
  \begin{minipage}[t]{0.32\textwidth}
    \centering
    \includegraphics[width=\textwidth]{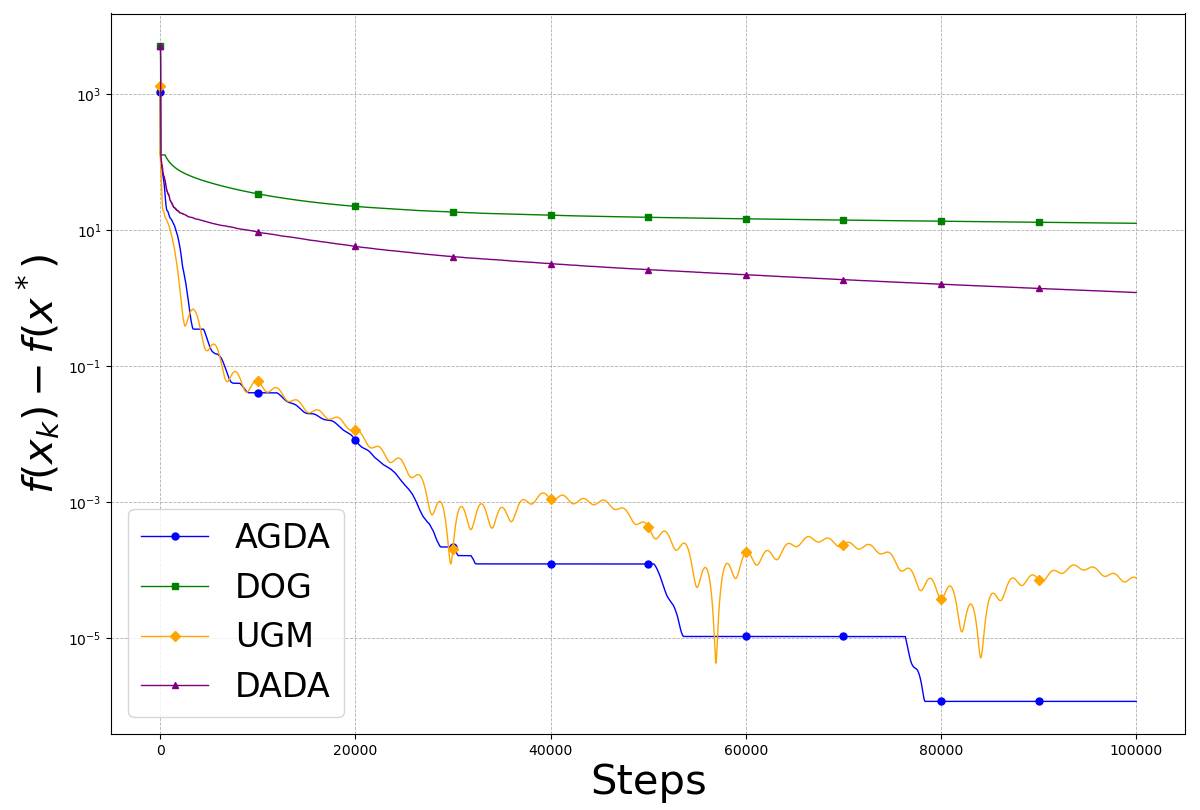}
  \end{minipage}
  \caption{Performance of the compared algorithms on the $L_p$ norm problem. Left: $p = 1$ with \texttt{diabetes}. Middle: $p = 1.5$ with \texttt{boston}. Right: $p = 2$ with \texttt{boston}.\label{figure 4}} 
\end{figure}

\paragraph{$L_p$ norm problem}
We consider the following problem as an illustrative example, where the smoothness property can be directly adjusted by modifying a parameter $p\in[1,2]$:
\begin{equation}
    \min\limits_{x\in \mathbb{R}^d} f(x) = \Vert Ax - b \Vert_p,
\end{equation}
where $A\in \Rbb^{n\times d},b\in\Rbb^{n}$ are taken from real-world datasets in LIBSVM. It is important to note that the smoothness of this problem can be controlled by changing the parameter \( p \); as \( p \) increases, the degree of smoothness decreases.

We use the same comparison methods as in the softmax problem, adhering to the same parameter settings.  In this problem, our algorithm significantly outperforms the other methods, especially when \( p \) is small. This suggests that our algorithm is more adaptive in nonsmooth settings and highlights its greater stability.

\paragraph{Large scale problem}
Subsequently, we present the efficacy of our method when applied to large-scale instances. Our main focus is a performance comparison with the UGM algorithm.

\subsection{Line-search bisection method}
Here we provide the pseudocode of line-search bisection method for clarification.
\begin{algorithm}
\renewcommand{\algorithmicrequire}{\textbf{Input:}}
\renewcommand{\algorithmicensure}{\textbf{Output:}}
\caption{Two-Stage Line Search}\label{alg:twostage_linesearch}
\begin{algorithmic}[1]
\Require $\beta_k$, function $l_k(\cdot)$; tolerance $\epsilon_k^l = \frac{\beta_0}{2k^2}$
\Ensure $\beta_{k+1}$

\State $i \gets 1$
\While{$l_k(2^{i-1}\beta_k) < 0$}
    \State $i \gets i + 1$
\EndWhile \Comment{Stage 1 complete}
\State $i'_k \gets i$

\If{$i'_k = 1$} \Comment{Case 1: $l_k(\beta_k) \ge 0$}
    \State $i^*_k \gets 0$
    \State $\beta_{k+1} \gets \beta_k$
\Else \Comment{Case 2: Need binary search}
    \State $a \gets 2^{i'_k-2}\beta_k$
    \State $b \gets 2^{i'_k-1}\beta_k$
    \While{$b-a > \epsilon_k^l$}
        \State $m \gets (a+b)/2$
        \If{$l_k(m) < 0$}
            \State $a \gets m$
        \Else
            \State $b \gets m$
        \EndIf
    \EndWhile
    \State $\beta_{k+1} \gets b$
\EndIf

\end{algorithmic}
\end{algorithm}

\end{document}